\documentclass{aims}
\usepackage{amssymb, amsthm, amsmath}
\usepackage{amsfonts}
\usepackage{mathrsfs}
\usepackage{amsmath}
\usepackage{paralist}
\usepackage{cases}
\usepackage{graphics} 
\usepackage{epsfig} 
\usepackage{graphicx}  \usepackage{epstopdf}
\usepackage[colorlinks=true]{hyperref}
\hypersetup{urlcolor=blue, citecolor=red}
\usepackage[pagewise]{lineno}

  \textheight=8.2 true in
   \textwidth=5.0 true in
    \topmargin 30pt
     \setcounter{page}{1}
\allowdisplaybreaks


\newtheorem{theorem}{Theorem}[section]

\newtheorem{claim}{Claim}[section]
\newtheorem{lemma}[theorem]{Lemma}
\newtheorem{proposition}{Proposition}[section]

\theoremstyle{definition}
\newtheorem{definition}[theorem]{Definition}
\newtheorem{remark}{Remark}[section]

\newcommand{\ep}{\varepsilon}

\newcommand\R{{\mathbb R}}
\newcommand{\fc}{\frac}
\numberwithin{equation}{section}

\title[Schr\"odinger equation] 
      {Local well-posedness and finite time blowup for fourth-order  Schr\"odinger equation with complex coefficient}

\author[X. Liu and T. Zhang]{}

\subjclass{Primary:  35L71; Secondary:  35B30, 35B44.}
\keywords{Fourth-order Schr\"odinger equation; Local well-posedness; Continuous dependence.}

\email{lxmath@zju.edu.cn}
\email{zhangting79@zju.edu.cn}


\thanks{$^*$ Corresponding author: Ting Zhang}

\begin{document}
\maketitle
\centerline{\scshape Xuan Liu and  Ting Zhang$^*$}
\medskip
{\footnotesize
	\centerline{School of Mathematical Sciences, Zhejiang University, Hangzhou 310027, China}
}

\bigskip

\centerline{(Communicated by the associate editor name)}

\begin{abstract}
We consider the fourth-order Schr\"odinger equation
$$
i\partial_tu+\Delta^2 u+\mu\Delta u+\lambda|u|^\alpha u=0,
$$
where $\alpha>0,\mu=\pm1$ or $0$ and  $\lambda\in\mathbb{C}$. Firstly, we prove   local well-posedness in $H^4\left(\R^N\right)$ in both  $H^4$ subcritical and critical case:  $\alpha>0$, $(N-8)\alpha\leq8$.  Then, for any given compact set $K\subset\mathbb{R}^N$, we construct $H^4(\R^N)$ solutions that are defined on $(-T, 0)$ for some $T>0$, and blow up exactly on $K$ at $t=0$.
\end{abstract}

\section{Introduction}
In this paper, we study the following fourth-order nonlinear Schr\"odinger equation of the power type nonlinearity
\begin{numcases}
{\ }
i\partial_tu+\Delta^2 u+\mu\Delta u+\lambda|u|^\alpha u=0,\ t\in\R, x\in \mathbb{R}^N,\label{NLS}\\
u|_{t=0}=\phi,\label{12241}
\end{numcases} where $\alpha>0, \lambda\in\mathbb{C},$ $\mu\in\R$ is essentially given by $\mu=\pm1$ or $\mu=0$, and  $u:\mathbb{R}\times\mathbb{R}^N\rightarrow\mathbb{C}$ is a complex-valued function. Cauchy problem (\ref{NLS})-(\ref{12241}) may be considered as a generation of the  classical Schr\"odinger equation
\begin{equation}\label{11101}
i\partial_tu+\Delta u+\lambda|u|^\alpha u=0
\end{equation}
with the  complex coefficient $\lambda\in\mathbb{C}$,  which in turn is a particular case of the complex Ginzburg-Landau
equation on $\R^N$
\begin{eqnarray}\label{1185}
\partial_{t} u=e^{i \theta} \Delta u+\zeta|u|^{\alpha} u
\end{eqnarray}
where $|\theta| \leq \frac{\pi}{2}$ and $\zeta \in \mathbb{C}$.  Moreover the equation (\ref{1185}) in fact is a generic modulation equation that describes the nonlinear evolution of patterns at near-critical conditions. See for instance \cite{Cr,Mie,Ste}.

The fourth-order Schr\"{o}dinger equation was introduced by Karpman \cite{Karpman1}, and Karpman-Shagalov \cite{Karpman2} to take into
account the role of small fourth-order dispersion terms in the propagation of intense laser beams in a bulk medium with Kerr
nonlinearity. The study of nonlinear fourth-order Schr\"odinger equation with the power type nonlinearity as in (\ref{NLS}) has been attracted a lot of interest in the past decade, see e.g. \cite{Boulenger,Cho,Guo3,Dinh,Miao,Miao2,Pa} and references therein.

One main purpose of this article is to establish the local  well-posedness  for the Cauchy problem (\ref{NLS})-(\ref{12241}) in $H^4\left(\R^N\right)$. In \cite{Pa}, Pausader established the local well-posedness of (\ref{NLS}) in both $H^2\left(\R^N\right)$ subcrtitical and  critical cases, namely, $f(u)=\lambda \left|u\right|^{\alpha}u$ with $\lambda\in\R$ and $0<\alpha, \left(N-4\right)\alpha\le8$. Moreover, he established the global well-posedness and the scattering for the radial datum in the defocusing energy-critical case. In this paper, we show that if the initial datum $\phi\in H^4$, then this regularity can be propagated.  These results are very similar to the classical nonlinear Schr\"odinger case, see e.g.  \cite{Ca4,Kato}.
\begin{theorem}[Subcritical case]\label{T1.00}
	Assume  $\phi\in H^4(\R^N)$, $N\ge1,\lambda\in\mathbb{C},\mu=\pm1$ or $0,\alpha>0$ and $(N-8)\alpha<8$. There exist $T_{\text{max}},T_{\text{min}}\in(0,\infty]$ and a unique  maximum solution $ u \in C ((-T_{\text{min}},T_{\text{max}}),$ $ H^4(\R^N)) $ to  the  Cauchy problem (\ref{NLS})-(\ref{12241}). Moreover, the following properties hold:  \\
	(1) If $T_{\text{max}}<\infty$,  then
	\begin{equation}\label{bl1}
	\lim_{t\uparrow T_{\text{max}}}\|u(t)\|_{H^4}=\infty.\end{equation}
	A corresponding conclusion is reached if $T_{\text{min}}<\infty$. \\
	(2)  The solution $u\in H_{loc}^{1,q}((-T_{\text{min}},T_{\text{max}}),L^r(\R^N)) $ for any biharmonic admissible pair $(q,r)\in\Lambda_b$. (See Section \ref{section 02} for the definition of $\Lambda_b$.)\\
	(3) The solution $u$ depends continuously on $\phi$ in the following sense:  If $\phi_n\rightarrow\phi$ in $H^4(\R^N)$ and  $u_n$ denotes the solution of (\ref{NLS}) with the initial value $\phi_n$, then $u_n\rightarrow u$ in $C([-B,A],H^4(\R^N))$ and in $H^{1,q}((-B,A),L^r(\R^N))$  for any biharmonic admissible pair $(q,r)\in\Lambda_b$    and every $-T_{\text{min}}<-B<0<A<T_{\text{max}}.$
\end{theorem}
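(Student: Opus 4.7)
The plan is to write (\ref{NLS}) in its Duhamel form
\begin{equation*}
u(t) = U(t)\phi - i\lambda\int_0^t U(t-s)\,|u(s)|^\alpha u(s)\,ds,
\end{equation*}
with $U(t) = e^{it(\Delta^2+\mu\Delta)}$ the linear propagator, and to solve it by a contraction argument in a Strichartz-type space carrying four derivatives. The linear ingredient is the family of Strichartz estimates for $U$ on biharmonic admissible pairs $(q,r)\in\Lambda_b$, available from Pausader's work: the symbol $|\xi|^4-\mu|\xi|^2$ has the same high-frequency behaviour as $|\xi|^4$, so the lower-order term $\mu\Delta$ does not spoil the dispersion (either by direct stationary phase or a perturbative argument), and since $U$ commutes with Fourier multipliers the estimates lift to the $W^{4,r}$-scale after applying $\langle\nabla\rangle^4$.

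The core of the argument is the nonlinear Lipschitz estimate
\begin{equation*}
\bigl\||u|^\alpha u-|v|^\alpha v\bigr\|_{L^{q'}_T W^{4,r'}}\lesssim T^{\theta}\bigl(\|u\|_{X_T}^\alpha+\|v\|_{X_T}^\alpha\bigr)\|u-v\|_{X_T},
\end{equation*}
in the mixed-norm space $X_T=L^\infty_T H^4\cap L^q_T W^{4,r}$ for a well-chosen $(q,r)\in\Lambda_b$; the strict inequality $(N-8)\alpha<8$ produces a positive power $T^\theta$ via H\"older in time. When $\alpha>3$, $f(z)=|z|^\alpha z$ is of class $C^4$ and the estimate follows from the fractional Leibniz and chain rules together with the Sobolev embeddings of $H^4$ (into $L^\infty$ when $N<8$, into $L^{2N/(N-8)}$ when $N>8$, and intermediate cases by interpolation). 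For $\alpha\le 3$, where $f$ has too few pointwise derivatives to be differentiated four times, I expect the main technical obstacle; the remedy is Kato's trick: differentiate the equation in $t$ to see that $w=\partial_t u$ formally solves the \emph{linear} equation
\begin{equation*}
i\partial_t w+\Delta^2 w+\mu\Delta w+\lambda\bigl[(1+\tfrac{\alpha}{2})|u|^\alpha w+\tfrac{\alpha}{2}|u|^{\alpha-2}u^2\overline{w}\bigr]=0,
\end{equation*}
whose coefficients are pointwise bounded by $|u|^\alpha$. One estimates $w$ in Strichartz norms and then recovers the top-order control via the identity $\Delta^2 u = i\partial_t u-\mu\Delta u-\lambda|u|^\alpha u$ in $L^2$, closing the $H^4$ bound without ever differentiating $f$ four times.

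With the nonlinear estimate in hand, I would run a contraction in a closed ball of $X_T$, equipped with a weaker metric (the base-level Strichartz norm) to avoid a regularity gap in the contraction estimate, producing a unique local solution on some $[-T,T]$ with $T=T(\|\phi\|_{H^4})>0$. Gluing yields the maximal interval $(-T_{\text{min}},T_{\text{max}})$, and the blow-up alternative (\ref{bl1}) follows because a uniform $H^4$ bound near $T_{\text{max}}$ would allow one to restart the local theory past $T_{\text{max}}$, a contradiction. Membership in $H^{1,q}_{\text{loc}}((-T_{\text{min}},T_{\text{max}}),L^r)$ for every $(q,r)\in\Lambda_b$ is then read off the Duhamel formula and the Strichartz bounds applied to $u$ and to $\partial_t u$. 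Finally, continuous dependence follows by applying the same Lipschitz estimate to two solutions with nearby data on a common subinterval where both remain in the contraction ball, iterating across the closed time interval $[-B,A]\subset(-T_{\text{min}},T_{\text{max}})$ in finitely many steps.
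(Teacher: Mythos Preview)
Your overall strategy---Duhamel formulation, Strichartz estimates for $e^{it(\Delta^2+\mu\Delta)}$, Kato's trick of differentiating in time to avoid placing four spatial derivatives on $|u|^\alpha u$, contraction with a weak metric, and standard extension to a maximal interval---is exactly the route the paper takes. Two remarks:

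First, the split into $\alpha>3$ (direct $W^{4,r}$ estimates via chain rule) versus $\alpha\le 3$ (Kato's trick) is unnecessary and the paper does not make it: the time-derivative approach is used uniformly for all $\alpha>0$, with the fixed-point space built from $H^{1,\gamma}(I,L^\rho)\cap L^\gamma(I,H^{4,\rho})$ (for $N\ge 9$) and the $\Delta^2 u$ control recovered \emph{a posteriori} from $\partial_t u$ and the equation. This avoids having to verify fractional chain rules at order four separately.

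Second, and more substantively, your continuous-dependence step has a gap. You write that it ``follows by applying the same Lipschitz estimate,'' but for $0<\alpha\le 1$ the difference
\[
\partial_t\bigl(|u^1|^\alpha u^1-|u^2|^\alpha u^2\bigr)
\]
contains a term bounded only by $|u^1-u^2|^\alpha\,|\partial_t u^2|$, which is H\"older, not Lipschitz, in $u^1-u^2$. A naive Gronwall or contraction argument in $H^{1,q}L^r$ therefore fails. The paper's cure (its Lemma~3.1) is a truncation: write $|\partial_t u^2|=(\partial_t u^2)^R+(\partial_t u^2)_R$ at level $R$. On the low part $(\partial_t u^2)_R\le R$ one gains integrability and the H\"older loss is harmless; on the high part $(\partial_t u^2)^R$ one has $\|(\partial_t u^2)^R\|_{L^\gamma L^\rho}\to 0$ as $R\to\infty$ by dominated convergence. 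Taking $\limsup_n$ first and then $R\to\infty$ closes the estimate for $\|u_n-u\|_{H^{1,q}L^r}$. After that, further interpolation-type lemmas (controlling $\||u|^\alpha u\|$ via $\|u\|$ and $\|\partial_t u\|$ through the identity $|u|^{\alpha+1}u=|\phi|^{\alpha+1}\phi+\int_0^t\partial_s(|u|^{\alpha+1}u)\,ds$) upgrade the convergence to $L^\gamma(I,H^{4,\rho})$ and then to $C(I,H^4)$. Without this truncation-and-limit device, the continuous-dependence claim in the full $H^4$ topology is not justified for small $\alpha$.
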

\begin{theorem}[Critical case]\label{T1.01}
	Assume $\phi\in H^4(\R^N)$, $N\ge9$, $\lambda\in\mathbb{C},\mu=\pm1$ or $0$ and $\alpha=\frac{8}{N-8}$.  There exist $T_{\text{max}},T_{\text{min}}\in(0,\infty]$ and a unique  maximum solution $ u \in C ((-T_{\text{min}},T_{\text{max}}),$ $ H^4(\R^N)) $ to  the  Cauchy problem (\ref{NLS})-(\ref{12241}). Moreover, the following properties hold:  \\
	(1) If 	$T_{\text{max}}<\infty$,  then \begin{equation}\label{bl3}
	\|u(t)\|_{L^{\fc{2N-8}{N-8}}([0,T_{\text{max}}),L^{\fc{2N(N-4)}{(N-8)^2}})}=\infty.
	\end{equation}
	A corresponding conclusion is reached if $T_{\text{min}}<\infty$. \\
	(2)  The solution $u\in H_{loc}^{1,q}((-T_{\text{min}},T_{\text{max}}),L^r(\R^N)) $ for any biharmonic admissible pair $(q,r)\in\Lambda_b$. \\
	(3) The solution $u$ depends continuously on $\phi$ in the following sense:  If $\phi_n\rightarrow\phi$ in $H^4(\R^N)$ and  $u_n$ denotes the solution of (\ref{NLS}) with the initial value $\phi_n$, then $u_n\rightarrow u$ in $C([-B,A],H^4(\R^N))$ and in $W^{1,q}((-B,A),L^r(\R^N))$  for any biharmonic admissible pair $(q,r)\in\Lambda_b$    and every $-T_{\text{min}}<-B<0<A<T_{\text{max}}.$
	
\end{theorem}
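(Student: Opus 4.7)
The plan is to run a contraction mapping on Duhamel's formula
\[
u(t) = e^{it\Delta^2}\phi + i\int_0^t e^{i(t-s)\Delta^2}\bigl(\mu\Delta u(s) + \lambda|u|^\alpha u(s)\bigr)\,ds
\]
in a Strichartz norm adapted to the critical exponent $\alpha=\frac{8}{N-8}$. I would take as the working space a ball in $L^{\tilde q}([0,T],L^{\tilde r})$ where $(\tilde q,\tilde r)=\bigl(\frac{2N-8}{N-8},\frac{2N(N-4)}{(N-8)^2}\bigr)$ is precisely the critical biharmonic admissible pair appearing in (\ref{bl3}); a direct Hölder computation then yields $\||u|^\alpha u\|_{L^{\tilde q'}_tL^{\tilde r'}_x}\lesssim \|u\|^{\alpha+1}_{L^{\tilde q}_tL^{\tilde r}_x}$ with no positive power of $T$ on the right, which is the defining feature of the critical case. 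The lower-order term $\mu\Delta u$ is handled as a perturbation via Strichartz with two derivatives, as it is dominated by $\Delta^2$ at high frequency and harmless at low frequency.

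Since time provides no smallness, smallness must come from the data side: by the global Strichartz estimate applied to $\phi\in L^2$, the function $t\mapsto e^{it\Delta^2}\phi$ lies in $L^{\tilde q}(\R,L^{\tilde r})$, whence dominated convergence gives $\|e^{it\Delta^2}\phi\|_{L^{\tilde q}([0,T],L^{\tilde r})}\to 0$ as $T\downarrow 0$. For $T$ small enough, the Duhamel map is therefore a contraction on a closed ball, yielding local existence and uniqueness at the level of the critical norm. To upgrade to $H^4$ and obtain the bounds claimed in item (2), I would apply $\langle\nabla\rangle^4$ to the integral equation and combine the fractional chain rule applied to $|u|^\alpha u$ with the full biharmonic Strichartz inequality ranging over all pairs in $\Lambda_b$; the same chain-rule estimate controls differences at the $H^4$ level.

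The blowup criterion (\ref{bl3}) follows by the standard argument: if $\|u\|_{L^{\tilde q}([0,T_{\max}),L^{\tilde r})}<\infty$, one partitions $[0,T_{\max})$ into finitely many subintervals on each of which the critical norm is below the contraction threshold, then chains the $H^4$-level Strichartz estimates to produce a uniform $H^4$ bound up to $T_{\max}$, contradicting maximality. The step I expect to be the main obstacle is item (3), continuous dependence in $C([-B,A],H^4)$: a naive Strichartz difference estimate for $u_n-u$ carries factors $\|u_n\|_{L^{\tilde q}L^{\tilde r}}^{\alpha}+\|u\|_{L^{\tilde q}L^{\tilde r}}^{\alpha}$ that are not \emph{a priori} small on the whole of $[-B,A]$. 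Following Cazenave's treatment of the energy-critical NLS, I would first partition $[-B,A]$ into finitely many subintervals on which $\|u\|_{L^{\tilde q}(I_j,L^{\tilde r})}$ is below the threshold (possible by finiteness and absolute continuity of the $L^{\tilde q}$ norm), run the contraction argument for $u_n$ on each $I_j$ using that $\phi_n\to\phi$ in $H^4$, and then chain the $H^4$-level Strichartz difference estimates across the pieces to obtain convergence in $C([-B,A],H^4)$ and in $W^{1,q}((-B,A),L^r)$ for every $(q,r)\in\Lambda_b$.
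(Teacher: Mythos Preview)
Your outline has a genuine gap at the step ``apply $\langle\nabla\rangle^4$ to the integral equation and combine the fractional chain rule applied to $|u|^\alpha u$.'' The fractional chain rule (or Moser-type product estimates) at order $4$ requires the nonlinearity $z\mapsto |z|^\alpha z$ to have roughly $C^4$ regularity, i.e.\ $\alpha\ge 3$. In the critical case $\alpha=\frac{8}{N-8}$ this fails as soon as $N\ge 11$, and for $N\ge 17$ one even has $\alpha<1$, so the map is not $C^2$ and no useful chain rule is available at the $H^4$ level. The same obstruction blocks your plan for continuous dependence: a direct $H^4$-difference estimate for $|u_n|^\alpha u_n-|u|^\alpha u$ is not available when $\alpha$ is small.

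The paper circumvents this entirely via Kato's device: instead of taking four space derivatives, it differentiates the equation once in \emph{time}, runs the Strichartz contraction on the pair $(u,\partial_t u)$ in $L^\gamma(I,H^{4,\rho})\cap H^{1,\gamma}(I,L^\rho)$ with $(\gamma,\rho)$ as in (\ref{7181}), and then recovers $\Delta^2 u$ from the equation $\Delta^2 u=-i\partial_t u-\mu\Delta u-\lambda|u|^\alpha u$ together with a separate estimate on $|u|^\alpha u$ (Lemmas \ref{l3}--\ref{l4}). This requires only one derivative of the nonlinearity and so works for every $\alpha>0$. The smallness in the critical case comes from the quantity $F(\phi,T)$ in (\ref{1213}), which plays the role of your $\|e^{it\Delta^2}\phi\|_{L^{\tilde q}([0,T],L^{\tilde r})}$. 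Continuous dependence is then handled by a delicate splitting $|\partial_t u|=(\partial_t u)^R+(\partial_t u)_R$ (Lemma \ref{l1}) that absorbs the non-Lipschitz piece when $\alpha\le 1$; this is precisely the step that your ``chain the $H^4$-level difference estimates'' would not survive. Two minor points: the paper puts $\mu\Delta$ into the linear propagator $e^{it(\Delta^2+\mu\Delta)}$ and uses Strichartz for that operator directly (Lemma \ref{L2.2S}), rather than treating it as a perturbation; and the blowup criterion is proved exactly as you sketch, by showing finiteness of the critical norm forces $u\in L^\infty([0,T_{\max}),H^4)$ via the time-derivative route.
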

We prove Theorems \ref{T1.00} and \ref{T1.01} in the spirit of  \cite{Kato,Kato2}. See also Cazenave, Fang and Han \cite{Ca4}. The fundamental ingredient  is the choice of the metric space where the Banach fixed-point argument can be applied. We note  that
obtaining $H^4$  estimates  by  differentiating   the  equation four times in  space  would  require  that  the nonlinearity is sufficiently smooth.  Instead,
we  differentiate the equation  once  in time  and  then  deduce  $H^4$ estimates  by using  the
equation and the estimates of $\partial_tu$.  See Sections \ref{s4} and \ref{s5} for more details.

Another main result of this paper concerns the finite time blowup of the fourth-order  Schr\"odinger equation (\ref{NLS}).
The existence of blowup solutions for the equation (\ref{NLS}) has been strongly supported by a series of numerical studies done by Fibich, Ilan and Papanicolaou \cite{Fibich} for mass-critical and mass-supercritical powers $\alpha\geq\frac8N$. Recently, it has been proved rigorous in Boulenger \cite{Boulenger}, under the natural criteria from the well-known blowup results for the classical nonlinear Schr\"odinger equation, that the blowup solutions exist for the radial data in $H^2(\R^N)$ with the negative initial energy.
For more blowup results for the fourth-order Schr\"odinger equation, we refer to \cite{BF1,BF2,BF3,Cho,Dinh}  and references therein.

In this paper, we establish the following  finite time blowup result for the fourth-order Schr\"odinger equation (\ref{NLS}).
\begin{theorem}\label{T1.1}
	Under the conditions
	\begin{equation}\label{71512}
	\alpha>0,\ (N-8)\alpha\leq8,\ \text{Im }\lambda<0,
	\end{equation}
	for any nonempty compact subset  $K\subset\mathbb{R}^N$,   there exist  $S\in(-1, 0)$ and a solution $u\in C ([S,  0),$ $  H^4(\mathbb{R}^N))\cap C^1([S, 0), L^{2}(\mathbb{R}^N))$ to the equation (\ref{NLS})
	which blows up at time 0 exactly on $K$ in the following sense. \\
	$(1)$ If $x_0\in K$ then for any $r>0$,
	\begin{equation}\label{1.4}
	\lim_{t\uparrow0}\|u(t)\|_{L^2(|x-x_0|<r)}=\infty.
	\end{equation}\\
	$(2)$ If $U$ is an open subset of $\mathbb{R}^N$ such that $K\subset U$,  then
	\begin{equation}\label{1.5}
	\lim_{t\uparrow0}\|\Delta^2 u(t)\|_{L^2(U)}=\infty, \qquad\lim_{t\uparrow0}\|\partial_tu(t)\|_{L^2(U)}=\infty.\\
	\end{equation}
	$(3)$ If  $\Omega$ is an open subset of $\mathbb{R}^N$ such that $\overline\Omega\cap K={\O}$,  then
	\begin{equation}\label{1.6}
	\sup_{t\in(S,  0]}\|u(t)\|_{H^4(\Omega)}<\infty, \qquad\sup_{t\in(S,  0]}\|\partial_tu(t)\|_{L^2(\Omega)}<\infty.
	\end{equation}
\end{theorem}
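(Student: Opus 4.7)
The strategy is to construct $u$ as a perturbation $u=V+w$ of an explicit ODE profile $V$ that already blows up exactly on $K$ at $t=0$, and then to solve a forced fourth-order Schr\"odinger equation for the remainder $w$ via a Banach fixed point built on the Strichartz framework used in Theorems~\ref{T1.00}--\ref{T1.01}. The sign condition $\mathrm{Im}\,\lambda<0$ is exactly what lets the pointwise ODE $i\partial_tV+\lambda|V|^\alpha V=0$ produce finite-time blowup with positive amplitude.

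\textbf{The profile $V$.} Fix a smooth function $f\in C^\infty(\R^N,[0,\infty))$ with $f^{-1}(0)=K$, vanishing on $K$ to sufficiently high order and growing polynomially at infinity, and set
\[
V(t,x) := A\bigl(f(x)-t\bigr)^{-1/\alpha - i\beta},
\]
where $|A|^\alpha>0$ and $\beta\in\R$ are the unique solution (using $\mathrm{Im}\,\lambda<0$) of the complex relation $\lambda|A|^\alpha=\beta-i/\alpha$, and the complex power is taken via the principal branch on $\{f(x)-t>0\}$. A direct computation gives $i\partial_tV+\lambda|V|^\alpha V\equiv 0$ and $|V(t,x)|=|A|(f(x)-t)^{-1/\alpha}$. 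For each $t<0$, $V(t,\cdot)$ is smooth, and decays with its derivatives at infinity fast enough that $V(t,\cdot)\in H^4(\R^N)$; as $t\uparrow 0$, $|V(t,x)|\to\infty$ if and only if $x\in K$.

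\textbf{Fixed point for the remainder.} For $S\in(-1,0)$ to be chosen, take $\phi:=V(S,\cdot)\in H^4$. The remainder $w=u-V$ satisfies $w(S)=0$ and
\[
w(t) = i\int_S^t\mathcal{U}(t-s)\Bigl[\lambda\bigl(|V+w|^\alpha(V+w)-|V|^\alpha V\bigr) + (\Delta^2+\mu\Delta)V\Bigr](s)\,ds,
\]
where $\mathcal{U}$ is the unitary group of $i\partial_t+\Delta^2+\mu\Delta$. One sets up a contraction in a space of the form $C([S,0],H^4(\R^N))\cap L^q(S,0;W^{1,r})$ for a biharmonic admissible pair $(q,r)\in\Lambda_b$, handling the nonlinear difference exactly as in the local theory of Theorems~\ref{T1.00}--\ref{T1.01}. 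The new ingredient---and the main technical obstacle---is the forcing $(\Delta^2+\mu\Delta)V$, which diverges on $K$ as $t\uparrow 0$ like $(f-t)^{-1/\alpha-4}$. Using the prescribed vanishing of $f$ on $K$, its polynomial growth at infinity, and the exponent condition $(N-8)\alpha\le 8$, one shows that this source lies in the appropriate dual Strichartz space on $(S,0)$ with norm tending to $0$ as $S\uparrow 0$. For $|S|$ sufficiently small this yields a unique fixed point $w\in C([S,0],H^4)$ with $w(S)=0$.

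\textbf{Verification of (1)--(3).} Set $u:=V+w\in C([S,0),H^4)$, which, by the equation, satisfies $\partial_tu\in C([S,0),L^2)$. Since $\|w\|_{L^\infty([S,0];H^4)}<\infty$ while $V$ concentrates its singularity on $K$, the three asserted properties follow: (1) for $x_0\in K$ and $r>0$, $\|V(t)\|_{L^2(|x-x_0|<r)}\to\infty$ dominates the bounded contribution of $w$, giving (\ref{1.4}); (2) for any open $U\supset K$, both $\|\Delta^2V(t)\|_{L^2(U)}$ and $\|\partial_tV(t)\|_{L^2(U)}=|\lambda|\,\||V|^\alpha V(t)\|_{L^2(U)}$ diverge, while the corresponding norms of $w$ remain bounded, giving (\ref{1.5}); (3) for any open $\Omega$ with $\overline\Omega\cap K=\emptyset$, $f$ is bounded below on $\Omega$, so $V$ and all its spatial derivatives up to order $4$ stay uniformly bounded on $[S,0]\times\Omega$, giving (\ref{1.6}).
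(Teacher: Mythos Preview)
Your outline misses two essential difficulties that the paper addresses explicitly, and as written the fixed-point argument on $[S,0)$ cannot close.

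First, the forcing $(\Delta^2+\mu\Delta)V$ is in general \emph{not} in any dual Strichartz space on $(S,0)$. Near a point of $K$ one has $|\Delta^2 V|\sim (f-t)^{-1/\alpha-4}$, and after integrating in space this behaves like $(-t)^{-1/\alpha-4/k}$ in $L^2$ (with $k$ the vanishing order of $f$). For small $\alpha$ this is not even $L^1$ in time near $t=0$, regardless of how you choose $k$ (large $k$ is needed for smoothness/decay, but that makes the time singularity worse). This is precisely why the paper does not stop at $U_0\ (\approx V)$ but iteratively corrects it to $U_J$ with $J=[\tfrac{2}{\alpha}+4\sigma]+1$, so that the residual $\mathcal{E}_J$ gains the factor $(-t)^{J(1-4/k)}$ and becomes integrable to the required order (Lemma~\ref{L2.2J}).

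Second, even with a sufficiently refined ansatz, you cannot ``handle the nonlinear difference exactly as in the local theory'': the linearization around $V$ produces the potential $|V|^\alpha\sim(-t)^{-1}$ on $K$, so terms like $\||V|^\alpha w\|_{L^{q'}_tL^{r'}_x}$ blow up on any interval touching $t=0$, and no choice of $|S|$ small makes them contractive. The paper avoids this by \emph{not} running a fixed point on $[S,0)$. Instead it solves (\ref{NLS}) backward from $-1/n$ with data $U_J(-1/n)$, and controls $\varepsilon_n=u_n-U_J$ by weighted $L^2$ energy estimates (Proposition~\ref{P3.1}): multiplying by $\bar\varepsilon_n$ and by $\overline{\partial_t\varepsilon_n}$, the dangerous $(-t)^{-1}$ loss from $|U_J|^\alpha$ is absorbed into the weight $(-t)^{-\sigma}$, and here the sign $\mathrm{Im}\,\lambda<0$ is used again (not just to build $V$) to get the inequality in the right direction. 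One then passes to the limit $n\to\infty$ by compactness. Your scheme uses $\mathrm{Im}\,\lambda<0$ only to define $V$ and never in the remainder estimate, which is a symptom of the gap: a pure Strichartz contraction cannot see that sign, and indeed cannot close here.
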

\begin{remark}
	It follows from (\ref{1.4}) and (\ref{1.5}) that both $\|u(t)\|_2,  \|\Delta^2 u(t)\|_2$ and $\|\partial_t u(t)\|_2$ blow up when $t\uparrow0$. Moreover, the estimate (\ref{1.4}) can be refined. More precisely,  it follows from (\ref{4.11}), (\ref{4.6e}) and (\ref{4.8}) that
	\begin{equation*}
	(-t)^{-\frac1\alpha+\frac{N}{2k}}\lesssim\|u(t)\|_{L^2(|x-x_0|<r)}\lesssim(-t)^{-\frac1\alpha}
	\end{equation*}
	where $k> {N\alpha}$ is given by 
	({\ref{k}}).
\end{remark}
Similar blowup results has been established for the classical Schr\"odinger equation. With the restriction $\alpha\ge2$,  it is proved in Cazenave, Martel and Zhao \cite{Ca8} that under the assumption  $(N-2)\alpha\le4$ and $\text{Im}\lambda=-1$, the  finite time blowup occurs.  More precisely, they introduce the ansatz $U_0$ satisfying
$$
i\partial_tU_0+\lambda|U_0|^\alpha U_0=0,
$$
 then using energy estimates and compactness arguments to complete the proof. After that,   by refining the initial ansatz $U_0$ inductively,  the blow-up result is extended to the whole range of $H^1$ subcritical powers and arbitrary  $\text{Im}\lambda<0$ in Cazenave, Han and Martel \cite{Ca2}. This  result is then extended into $H^2$-subcritical case: $\alpha>0$ and $(N-4)\alpha<4$ in \cite{xuan}  with  the additional technical assumption $-\text{Im}\lambda>\frac\alpha2|\text{Re}\lambda|$.

We prove Theorem \ref{T1.1} by applying  the strategy of \cite{Ca8}. More precisely,  we consider the sequence $\{u_n\}_{n\geq1}$ of solutions of (\ref{NLS}) with the initial datum $u_n(-\frac1n)=U_J(-\frac1n)$,  where $U_J$ is a refined blowup profile defined in Lemma \ref{L2.2J}. Since $U_J(-\frac1n)\in H^4(\R^N)$ by (\ref{k}) and (\ref{2.31J}), it follows from Theorems \ref{T1.00} and \ref{T1.01} that $u_n$ is defined on $(s_n, -\frac1n)$ for some $s_n<-\frac1n$. We then define  $\varepsilon_n(t)=u_n(t)-U_J(t)$. Next,  by the energy arguments, we show that $\{\varepsilon_n\}_{n\geq1}$ is uniformly bounded in $L^\infty((S, \tau),  H^4)$ ($S$ is given by Proposition \ref{P3.1}) for any $\tau\in(S, 0)$. In Section \ref{section 4},  we find $\varepsilon\in$ $L^\infty((S, 0),  H^2)\cap W^{1, \infty}((S, 0), L^{2})$ and  a subsequence of $\{\varepsilon_n\}_{n\geq1}$ that  converges weakly to $\varepsilon$   by the compactness argument. Next, we  define  $u(t)=U_J(t)+\varepsilon(t)$, which turns out to be a $H^4$ solution to the equation  (\ref{NLS}). Finally,  note that $\varepsilon$ is bounded in$L^\infty([S, 0),$ $ H^4(\mathbb{R}^N))\cap C^1([S, 0),$ $ L^2(\mathbb{R}^N))$ and $U_J$ blows up at time $0$ exactly on $K$,  we deduce that $u(t)$ also blows up at time $0$ exactly on $K$.

The solution $u$ given by Theorem \ref{T1.1} blows up at $t=0$ like the function $U_J$ defined in Lemma \ref{L2.2J}. Since the function $U_0$ defined by (\ref{2.03}) satisfying $i\partial_tU_0+\lambda|U_0|^\alpha U_0 =0$,   and $U_J$ is a refinement of $U_0$,  we see that the solution $u$ displays an ODE-type blowup. We recall that there are many ODE-type blowup results for several other nonlinear equations,  refer to \cite{Col, Ca7,Me2, No} for  results in the parabolic context,  refer to \cite{Al, Me3, Sp} for the nonlinear wave  equations.

The rest of the paper is organized as follows.  In Section \ref{section 02}, we fix notations and recall preliminary results. In section \ref{s3}, we establish the nonlinear estimate. In Sections \ref{s4} and \ref{s5}, we prove Theorem \ref{T1.00} in the case $N\ge9$ and Theorem \ref{T1.01} respectively. In Section \ref{section 2}, we introduce the blow-up ansatz and the corresponding estimates. Section \ref{section 3} is devoted to the construction of a sequence of solutions of (\ref{NLS}) close to the blow-up ansatz and some \textit{a priori} estimates of the approximate solutions. In Section \ref{section 4},  we complete the proof of Theorem \ref{T1.1} by passing to the limit in the approximate solutions. Finally, an appendix is devoted to the proof of unconditional uniqueness theorem and Theorem \ref{T1.00} in the case $1\leq N\leq8.$
\section{Preliminary}\label{section 02}
If $X, Y$ are nonnegative quantities, we sometimes use $X\lesssim Y$ to denote the estimate $X\leq CY$ for some positive constant $C$. Pairs of conjugate indices are written as $p$ and $p', 1\leq p\leq\infty$, $\frac1p+\frac1{p'}=1$. We use $L^p (\mathbb{R}^N)$ to denote the Lebesgue space of functions $f :\mathbb{R}^N \rightarrow\mathbb{C}$ whose norm
\begin{gather}
\|f\|_ {L_x^p}:=
\left(\int_{\mathbb{R}^N}|f(x)|^p dx\right)^{\frac1p}
\end{gather}
where $1\leq p<\infty$ is finite, with the usual modifications when $p =\infty$. Given $k\in \mathbb N,1\le p\le \infty $, we use $H^{k,p}\left(\R^N\right)$ or $H^{k,p}$ to denote the usual Sobolev space. We also use the space-time Lebesgue spaces $L^\gamma \left(I,L^\rho\right)$
which are equipped with the
norm
\begin{gather}\notag
\|f\|_{L^\gamma(I;L^\rho(\mathbb{R}^N))}:=\left(\int_I\|f\|_{L_x^\rho}^\gamma dt\right)^{\frac1\gamma}
\end{gather}
for any space-time slab $I\times\mathbb{R}^N$, with the usual modification when either $\gamma$ or $\rho$ is infinity. For $1\le q<\infty$, Banach space $X$, we also introduce the vector-valued Lebesgue spaces $L^q \left(I,X\right)$ which are equipped with the norm
\begin{gather}\notag
\|u\|_{L^q(I,X)}:=\left(\int_I\|u\|_{X}^q dt\right)^{\frac1q}
\end{gather}
for any time slab $I\subset\mathbb{R}$, with the usual modification when $q$ is infinity. Furthermore, the Banach space of functions $u \in H^{1, p}(I, X)$ satisfies $u, \partial_{t} u \in L^{p}(I, X)$. For $\alpha>0$, we denote
\begin{eqnarray*}
	1_{\alpha>1}=\begin{cases}
		0,   & \text{if } 0<\alpha\leq 1, \\
		1,   & \text{if } \alpha>1,
	\end{cases}
	\qquad\text{ and }\qquad 	1_{0<\alpha<1}=\begin{cases}
		1,   & \text{if } 0<\alpha< 1, \\
		0,   & \text{if } \alpha\ge1.
	\end{cases}
\end{eqnarray*}

Following standard notations, we introduce Schr\"odinger admissible pairs as well as the corresponding Strichartz's estimate for the fourth-order Schr\"odinger equation.

\begin{definition}\label{bpair}
	A pair of Lebesgue space exponents $(\gamma, \rho)$ is called  biharmonic Schr\"odinger admissible for the equation (\ref{NLS}) if  $(\gamma, \rho)\in \Lambda_b$ where
	\begin{equation*}
	\Lambda_b=\{(\gamma, \rho):2\leq \gamma, \rho\leq\infty, \   \frac4\gamma+\frac N\rho=\frac N2, \  (\gamma, \rho, N)\neq(2, \infty, 4)\},
	\end{equation*}
\end{definition}
\begin{lemma}[Strichartz estimates for fourth-order NLS, \cite{Pa}]\label{L2.2S}
	Suppose that $(\gamma,\rho), $ $ (a,b)\in\Lambda_b $ are any two biharmonic admissible pairs, then for any function $u\in L^2(\mathbb{R}^N)$ and $h\in L^{a'}(I,L^{b'}(\mathbb{R}^N))$(if $\mu>0$, suppose also $|I|\leq1$), we have
	\begin{gather}\label{sz}
	\|e^{it(\Delta^2+\mu\Delta)}u\|_{L^\gamma(I; L^\rho)}\leq C\|u\|_{L^2},
	\end{gather}
	\begin{equation}\label{SZ}
	\|\int_0^te^{i(t-s)(\Delta^2+\mu\Delta)}h\ ds \|_{L^\gamma(I; L^\rho)}\leq C\|h\|_{L^{a'}(I;L^{b'})}
	\end{equation}
	where the integrates with respect to time are all over compact interval $I\subset\mathbb{R}.$
\end{lemma}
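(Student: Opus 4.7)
The plan is to derive both estimates from the Keel--Tao $TT^*$ framework, which reduces the problem to two ingredients: the $L^2$ isometry of the free propagator, and a pointwise dispersive estimate. The energy bound $\|e^{it(\Delta^2+\mu\Delta)}f\|_{L^2}=\|f\|_{L^2}$ is immediate from Plancherel, since the Fourier symbol $e^{it(|\xi|^4-\mu|\xi|^2)}$ has modulus one. The core of the argument is therefore the dispersive bound
\[
\bigl\|e^{it(\Delta^2+\mu\Delta)}f\bigr\|_{L^\infty}\lesssim |t|^{-N/4}\|f\|_{L^1},
\]
the fourth-order analogue of the classical Schr\"odinger decay, with rate $N/4$ dictated by the order of the symbol.

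For $\mu=0$ I would obtain this by a direct rescaling: the kernel is the oscillatory integral with phase $\xi\cdot x + t|\xi|^4$, and the substitution $\eta=|t|^{1/4}\xi$ reduces the supremum over $x$ to a dimensionless oscillatory integral controllable by van der Corput / stationary phase. For $\mu=1$ the phase becomes $\xi\cdot x+t(|\xi|^4-|\xi|^2)$, whose gradient vanishes on the sphere $|\xi|=1/\sqrt{2}$; this degenerate critical manifold is precisely why the hypothesis requires $|I|\le 1$. I would dyadically decompose in frequency: on shells $|\xi|\sim 2^j\gg 1$ the biharmonic term dominates and non-degenerate stationary phase delivers $|t|^{-N/4}$, while on the compact region containing the critical sphere the phase has a well-understood Hessian transverse to the sphere and the short-time restriction keeps the contribution bounded uniformly. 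For the opposite sign $\mu=-1$ the Hessian of $|\xi|^4+|\xi|^2$ is positive definite, so no extra time restriction is needed.

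Once the pointwise dispersive bound is in hand, Riesz--Thorin interpolation with the $L^2$ isometry yields the $L^{\rho'}\to L^\rho$ family with decay rate $|t|^{-\frac{N}{4}(1-2/\rho)}$, and the abstract Keel--Tao theorem then promotes it to the homogeneous estimate \eqref{sz} and the inhomogeneous estimate \eqref{SZ}. The admissibility relation $\tfrac{4}{\gamma}+\tfrac{N}{\rho}=\tfrac{N}{2}$ is exactly the scaling dictated by decay exponent $\sigma=N/4$, and the exclusion $(\gamma,\rho,N)\neq(2,\infty,4)$ is the standard forbidden double endpoint.

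The main obstacle I anticipate is the dispersive estimate when $\mu$ contributes a sphere of degenerate critical points in the phase: one genuinely loses the sharp global-in-time decay there, and a careful frequency localization together with the time hypothesis $|I|\le 1$ is required. Everything else is a routine application of established oscillatory integral and abstract Strichartz machinery, which is why this statement is quoted directly from Pausader \cite{Pa}.
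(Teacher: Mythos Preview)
The paper does not prove this lemma at all; it is quoted as a black box from Pausader \cite{Pa}, who in turn relies on the dispersive estimates of Ben-Artzi, Koch and Saut \cite{Ben}. Your outline---unitarity on $L^2$ plus the $|t|^{-N/4}$ dispersive decay fed into the Keel--Tao abstract Strichartz theorem, with the $\mu>0$ case requiring the short-time hypothesis because the Hessian of the symbol $|\xi|^4-\mu|\xi|^2$ degenerates on the sphere $|\xi|^2=\mu/2$---is exactly the route those references take, so there is nothing substantive to compare.

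One small correction of wording: it is not the \emph{gradient} of the phase $\xi\cdot x+t(|\xi|^4-|\xi|^2)$ that vanishes on the sphere $|\xi|=1/\sqrt{2}$ (that gradient is $x+t(4|\xi|^2-2)\xi$ and depends on $x$); rather, the Hessian of the symbol drops to rank one there, which is what spoils the uniform stationary-phase bound. Your next sentence about the Hessian being nondegenerate transverse to the sphere shows you already have the correct mechanism in mind.
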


In the rest of  this paper, we fix $\lambda\in\mathbb{C}$, $\mu=\pm1$ or $0$. We define
\begin{equation}\label{7181}
\gamma=\frac{8(\alpha+2)}{(N-8)\alpha},\ \rho=\frac{N(\alpha+2)}{N+4\alpha},
\end{equation}
when $N\ge9$ and $0<\alpha\le \frac{8}{N-8}$. Then it is easy to check that  $(\gamma,\rho)\in\Lambda_b$ is a biharmonic admissible pair, which will be frequently used along the paper.
Moreover we define the map
\begin{eqnarray}\label{12113}
(Su)(t)=e^{it(\Delta^2+\mu\Delta)}\phi+i\lambda \int_{0}^{t}e^{i\left(t-s\right)(\Delta^2+\mu\Delta)}[\left|u\right|^{\alpha}u]\left(s\right)\mathrm{d}s.
\end{eqnarray}
Note that $Su$ satisfies
\begin{eqnarray}\label{1217}
\begin{cases}
i\partial_t\left(Su\right)+\Delta^2\left(Su\right)+\mu\Delta \left(Su\right)+\lambda \left|u\right|^{\alpha}u=0,\\
\left(Su\right)(0)=\phi
\end{cases}
\end{eqnarray}
and that
\begin{eqnarray}\label{NLS2}
\partial_t \left(Su\right)&=&i e^{it(\Delta^2+\mu\Delta)}[\left(\Delta^2+\mu\Delta\right)\phi+\lambda \left|\phi\right|^{\alpha}\phi]\notag\\
&&+i\lambda \int_{0}^{t}e^{i\left(t-s\right)(\Delta^2+\mu\Delta)}\partial_s[\left|u\right|^{\alpha}u]\left(s\right)\mathrm{d}s.
\end{eqnarray}
Given  any $\phi\in H^4$ and $0<T<\infty $, we define
\begin{eqnarray}\label{1213}
F(\phi,T)&=&\|e^{it(\Delta^2+\mu\Delta)}(\Delta^2+\mu\Delta)\phi\|_{L^{\gamma}\left([0,T],L^\rho\right)}+\|e^{it(\Delta^2+\mu\Delta)}\left|\phi\right|^{\alpha}\phi\|_{L^{\gamma}\left([0,T],L^\rho\right)}\notag\\
&&+\left\|e^{it(\Delta^2+\mu\Delta)}\phi\right\|_{L^\gamma \left([0,T], H^{4,\rho}\right)}.
\end{eqnarray}
It follows from Strichartz's estimate (\ref{sz}) and Sobolev's embedding $H^4 \hookrightarrow L^{2\left(\alpha+1\right)}$ that
\begin{eqnarray}\label{12111}
F\left(\phi,T\right)&\lesssim& \left\|\left(\Delta^2+\mu\Delta\right)\phi\right\|_{L^2}+\left\|\left|\phi\right|^{\alpha}\phi\right\|_{L^2}+\left\|\phi\right\|_{H^4}\notag\\
&\lesssim &\left\|\phi\right\|_{H^4}+\left\|\phi\right\|_{H^4}^{\alpha+1}.
\end{eqnarray}
So that we have
\begin{eqnarray}
\lim_{T\to 0}F\left(\phi,T\right)=0
\end{eqnarray}
by the dominate convergence theorem. Moreover, it follows from Strichartz's estimate (\ref{sz}) and Sobolev's embedding $H^4 \hookrightarrow L^{2\left(\alpha+1\right)}$ again that the map $\left(\phi,T\right)\mapsto F\left(\phi,T\right)$ is continuous $H^4\times (0,\infty ]\to(0,\infty )$. Therefore, if $\phi_n\longrightarrow\phi$ in $H^4$ as $n\longrightarrow \infty $, then
\begin{eqnarray}
\sup_{n\ge1}F\left(\phi_n,T\right)\underset{T\downarrow0} \longrightarrow0.
\end{eqnarray}
\section{Nonlinear estimates}\label{s3}
The goal of this section is to establish the  nonlinear estimates that we will need  to  prove Theorems \ref{T1.00} and \ref{T1.01}.
Throughout this section,  we fix  $\lambda\in\mathbb{C},\mu=\pm1$ or $0, N\ge9,0<\alpha\le\fc8{N-8}$
and  $I=[0, T]$ with $T>0$.

Before starting the Lemmas, it is useful  to introduce  following numbers.
 Let $q_0,p_0$ be given by the equations
\begin{eqnarray}\label{12171}
\frac{\alpha+2}{2\left(\alpha+1\right)}=\frac{\alpha+1}{q_0}+\frac{1}{\rho},
\end{eqnarray}
\begin{eqnarray}\label{12201}
\frac{1}{\rho}=\frac{\alpha}{2(\alpha+1)}+\frac{1}{p_0}.
\end{eqnarray}
 Since $\rho=\frac{N\left(\alpha+2\right)}{N+4\alpha}$ by (\ref{7181}),  it is easily seen that
\begin{eqnarray}\label{12172}
\fc{\alpha+2}{2(\alpha+1)}-(\alpha+1)\left(\fc1\rho-\fc4N\right)-\fc1\rho
=\fc{8-(N-8)\alpha}{2(\alpha+1)}\ge0,
\end{eqnarray}
\begin{eqnarray}\label{12173}
\frac{\alpha+2}{2\left(\alpha+1\right)}-\left(\alpha+1\right)\frac{1}{\rho}-\frac{1}{\rho}=-\frac{4\alpha}{N}<0,
\end{eqnarray}
and
\begin{eqnarray}\label{12202}
\frac{1}{p_0}-\frac{1}{\rho}+\frac{4}{N}=\frac{8-\left(N-8\right)\alpha}{2N\left(\alpha+1\right)}\ge0.
\end{eqnarray}
 It follows from  (\ref{12171})--(\ref{12202}) that  $\frac{1}{\rho}\ge\frac{1}{q_0}\ge\frac{1}{\rho}-\frac{4}{N}$,
and $\frac{1}{\rho}>\frac{1}{p_0}\ge\frac{1}{\rho}-\frac{4}{N}$.
Therefore, we deduce the embedding
\begin{eqnarray}\label{a1}
\left\|u\right\|_{L^{q_0}\cap L^{p_0}}\lesssim \left\|u\right\|_{H^{4,\rho}}.
\end{eqnarray}
\begin{lemma}\label{l1}
	Assume that  $u^1,u^2\in  L^\gamma\left(I,H^{4,\rho}\right)\cap H^{1,\gamma}\left(I,L^\rho\right)$ are two solutions to the equation (\ref{NLS}) with initial   data $\phi^1,\phi^2$ respectively,  there exist $\rho<\widetilde{\rho}_1,\widetilde{\rho}_2<\infty$ such that  for any $R>0$ we have
	\begin{eqnarray}\label{12301}
		&&\sup_{(q,r)\in\Lambda_b}\|Su^1-Su^2\|_{H^{1,q}\left(I,L^r\right)}\notag\\
		&\lesssim&\left(\|\phi^1\|_{H^4}^\alpha+\|\phi^2\|_{H^4}^\alpha+1\right)\|\phi^1-\phi^2\|_{H^4}\notag\\
		&&+T^{1-\fc{\alpha+2}\gamma} \left(\|u^1\|_{L^\gamma \left(I,H^{4,\rho}\right)}^\alpha+\|u^2\|_{L^\gamma \left(I,H^{4,\rho}\right)}^\alpha\right)\|u^1-u^2\|_{H^{1,\gamma} \left(I,L^\rho\right)}\notag\\
		&&+T^{1-\frac{\alpha+2}{\gamma }}\left\|\left(\partial_tu^2\right)^R\right\|_{L^\gamma \left(I,L^\rho\right)}\left(\left\|u^1\right\|_{L^\gamma \left(I,H^{4,\rho}\right)}^\alpha+\left\|u^2\right\|_{L^\gamma \left(I,H^{4,\rho}\right)}^\alpha\right)\notag\\
		&&+1_{0<\alpha\le1} T^{1-\fc{\alpha+2}\gamma+\fc{\widetilde\rho_1-\rho}{\gamma\widetilde\rho_1}}R^{\fc{\widetilde\rho_1-\rho}{\widetilde\rho_1}}\|\partial_tu^2\|_{L^{\gamma}\left(I,L^\rho\right)}^{\fc{\rho}{\widetilde\rho_1}}\|u^1-u^2\|_{L^\gamma\left(I,L^\rho\right)}^\alpha\notag\\
		&&+ 1_{\alpha>1}T^{1-\fc{\alpha+2}\gamma+\fc{\widetilde\rho_2-\rho}{\gamma\widetilde\rho_2}}R^{\fc{\widetilde\rho_2-\rho}{\widetilde\rho_2}}\|\partial_tu^2\|_{L^{\gamma}\left(I,L^\rho\right)}^{\fc{\rho}{\widetilde\rho_2}}\left(\|u^1\|_{L^\gamma \left(I,H^{4,\rho}\right)}^{\alpha-1}+\|u^2\|_{L^\gamma \left(I,H^{4,\rho}\right)}^{\alpha-1}\right)\notag\\
		&&\qquad\times\|u^1-u^2\|_{L^\gamma \left(I,L^\rho\right)}.
	\end{eqnarray}
	where
	\begin{eqnarray*}
		\left(\partial_tu^2\right)^R=\begin{cases}
			\left|\partial_tu^2\right|^{},\qquad\text{if }\left|\partial_tu^2\right|\ge R,\\
			0,\qquad\qquad\text{if }\left|\partial_tu^2\right|< R.\
		\end{cases}
	\end{eqnarray*} Moreover, we have
	\begin{eqnarray}\label{1214}
	\left\|Su^1\right\|_{H^{1,\gamma }\left(I,L^\rho\right)}\lesssim F\left(\phi^1,T\right)+T^{1-\frac{\alpha+2}{\gamma }}\left\|u^1\right\|_{L^\gamma \left(I,H^{4,\rho}\right)}^\alpha \left\|u^1\right\|_{H^{1,\gamma }\left(I,L^\rho\right)}.
	\end{eqnarray}
\end{lemma}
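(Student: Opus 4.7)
The plan is to apply the Strichartz estimates of Lemma \ref{L2.2S} to the Duhamel representation \eqref{12113} for $Su^1-Su^2$ and to the differentiated formula \eqref{NLS2} for $\partial_t(Su^1-Su^2)$, then close each nonlinear contribution by a space--time H\"older inequality based on the auxiliary exponents $q_0,p_0$ from \eqref{12171}--\eqref{12201} together with the embedding \eqref{a1}. For the homogeneous parts, the linear Strichartz bound \eqref{sz} produces the $\|\phi^1-\phi^2\|_{L^2}$ piece, while the boundary integrand in \eqref{NLS2}, namely $(\Delta^2+\mu\Delta)(\phi^1-\phi^2)+\lambda(|\phi^1|^\alpha\phi^1-|\phi^2|^\alpha\phi^2)$, is bounded in $L^2$ by $\|\phi^1-\phi^2\|_{H^4}+(\|\phi^1\|_{H^4}^\alpha+\|\phi^2\|_{H^4}^\alpha)\|\phi^1-\phi^2\|_{H^4}$ using $H^4\hookrightarrow L^{2(\alpha+1)}$ and the pointwise inequality $\bigl||u|^\alpha u-|v|^\alpha v\bigr|\lesssim(|u|^\alpha+|v|^\alpha)|u-v|$. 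This furnishes the first term on the right of \eqref{12301}.

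For the non-derivative Duhamel contribution to $\|Su^1-Su^2\|_{L^q(I,L^r)}$, \eqref{SZ} reduces matters to $\bigl\||u^1|^\alpha u^1-|u^2|^\alpha u^2\bigr\|_{L^{\gamma'}(I,L^{\rho'})}$; the same pointwise inequality, H\"older in space combined with \eqref{a1}, and H\"older in time yield the factor $T^{1-(\alpha+2)/\gamma}(\|u^1\|_{L^\gamma(I,H^{4,\rho})}^\alpha+\|u^2\|_{L^\gamma(I,H^{4,\rho})}^\alpha)\|u^1-u^2\|_{L^\gamma(I,L^\rho)}$, which is dominated by the second term of \eqref{12301} because $\|u^1-u^2\|_{L^\gamma(I,L^\rho)}\le\|u^1-u^2\|_{H^{1,\gamma}(I,L^\rho)}$.

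The heart of the argument, and the main obstacle, is controlling $\|\partial_t(Su^1-Su^2)\|_{L^q(I,L^r)}$. After Strichartz this reduces to estimating $\|\partial_s[|u^1|^\alpha u^1-|u^2|^\alpha u^2]\|_{L^{\gamma'}(I,L^{\rho'})}$. Writing schematically with $f(u)=|u|^\alpha u$ and lumping the conjugate terms,
$$
\partial_s f(u^1)-\partial_s f(u^2)=f'(u^1)\bigl(\partial_s u^1-\partial_s u^2\bigr)+\bigl(f'(u^1)-f'(u^2)\bigr)\partial_s u^2,
$$
the first piece, bounded pointwise by $|u^1|^\alpha|\partial_s u^1-\partial_s u^2|$, contributes again to the second term on the right of \eqref{12301} by the same H\"older/embedding scheme. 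For the genuinely nonlinear defect $(f'(u^1)-f'(u^2))\partial_s u^2$ I decompose $\partial_s u^2$ along the level set $\{|\partial_s u^2|\ge R\}$: on this upper set one uses only the crude bound $|f'(u^1)-f'(u^2)|\lesssim|u^1|^\alpha+|u^2|^\alpha$, producing the third term of \eqref{12301} with the factor $\|(\partial_t u^2)^R\|_{L^\gamma(I,L^\rho)}$. On the lower set $\{|\partial_s u^2|<R\}$ one uses the sharper bounds $|f'(u^1)-f'(u^2)|\lesssim|u^1-u^2|^\alpha$ when $0<\alpha\le1$ and $|f'(u^1)-f'(u^2)|\lesssim(|u^1|^{\alpha-1}+|u^2|^{\alpha-1})|u^1-u^2|$ when $\alpha>1$, and interpolates the truncated $\partial_s u^2$ between $L^\infty$ (bounded by $R$) and $L^\rho$ in an intermediate $L^{\widetilde\rho_i}$. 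This interpolation gives exactly the factor $R^{(\widetilde\rho_i-\rho)/\widetilde\rho_i}\|\partial_s u^2\|_{L^\rho}^{\rho/\widetilde\rho_i}$ appearing in the last two terms of \eqref{12301}; the exponents $\widetilde\rho_1,\widetilde\rho_2\in(\rho,\infty)$ are chosen so that both the spatial and temporal H\"older exponents close with the admissible pair $(\gamma,\rho)$. The hardest case is $0<\alpha\le1$, where $f'$ is only $\alpha$-H\"older rather than Lipschitz and the $R$-truncation is indispensable.

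Finally, \eqref{1214} follows from the same Strichartz machinery applied to \eqref{12113} and \eqref{NLS2} with a single solution: the two homogeneous terms in \eqref{NLS2} together with $\|e^{it(\Delta^2+\mu\Delta)}\phi^1\|_{L^\gamma(I,H^{4,\rho})}$ assemble precisely into $F(\phi^1,T)$ by definition \eqref{1213}, and the Duhamel integral is controlled via the pointwise bound $|\partial_s(|u^1|^\alpha u^1)|\lesssim|u^1|^\alpha|\partial_s u^1|$, H\"older in space using \eqref{12171}--\eqref{12201} and \eqref{a1}, and H\"older in time, producing the factor $T^{1-(\alpha+2)/\gamma}\|u^1\|_{L^\gamma(I,H^{4,\rho})}^\alpha\|u^1\|_{H^{1,\gamma}(I,L^\rho)}$.
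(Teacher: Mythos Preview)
Your approach is correct and matches the paper's proof: Strichartz on \eqref{12113} and \eqref{NLS2}, the splitting $\partial_s f(u^1)-\partial_s f(u^2)=f'(u^1)(\partial_s u^1-\partial_s u^2)+(f'(u^1)-f'(u^2))\partial_s u^2$, the $R$-truncation of $\partial_s u^2$, and the $L^\infty$--$L^\rho$ interpolation on the low piece with case-dependent $\widetilde\rho_i$. One small imprecision: for the spatial H\"older closures the paper uses the embedding $H^{4,\rho}\hookrightarrow L^{\rho\alpha/(\rho-2)}$ directly (so that $\tfrac{1}{\rho'}=\tfrac{\rho-2}{\rho}+\tfrac{1}{\rho}$), not the auxiliary exponents $q_0,p_0$ of \eqref{12171}--\eqref{12201} and \eqref{a1}, which are tailored to Lemmas~\ref{l3}--\ref{l4} rather than to this estimate.
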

\begin{proof}
	Firstly, we prove (\ref{1214}).  It follows from  equations (\ref{12113}), (\ref{NLS2}), Strichartz estimate (\ref{SZ}), H\"older's inequality  and Sobolev's embedding $H^{4,\rho}\hookrightarrow L^{\frac{\rho\alpha}{\rho-2}}$ that
	\begin{eqnarray}\label{1215}
	\left\|Su^1\right\|_{H^{1,\gamma }\left(I,L^\rho\right)}&\lesssim& F\left(\phi^1,T\right)+\left\|\left|u^1\right|^{\alpha}u^1\right\|_{H^{1,\gamma '}\left(I,L^{\rho'}\right)}\notag\\
	&\lesssim &F\left(\phi^1,T\right)+T^{1-\frac{\alpha+2}{\gamma }}\left\|u^1\right\|_{L^\gamma \left(I,H^{4,\rho}\right)}^\alpha \left\|u^1\right\|_{H^{1,\gamma }\left(I,L^\rho\right)},
	\end{eqnarray}
	which gives (\ref{1214}).

	We now prove (\ref{12301}). Using the same method as that used to derive  (\ref{1215}), we obtain
\begin{eqnarray}\label{1216}
&&\sup_{\left(q,r\right)\in\Lambda_b}\left\|Su^1-Su^2\right\|_{L^q\left(I,L^r\right)}\notag\\
&\lesssim& \left\|\phi^1-\phi^2\right\|_{L^2}+\left\|\left|u^1\right|^{\alpha}u^1-\left|u^2\right|^{\alpha}u^2\right\|_{L^{\gamma '}\left(I,L^{\rho'}\right)}\\
&\lesssim &\left\|\phi^1-\phi^2\right\|_{L^2}+T^{1-\frac{\alpha+2}{\gamma }}\left(\left\|u^1\right\|_{L^\gamma \left(I,H^{4,\rho}\right)}^\alpha+\left\|u^2\right\|_{L^\gamma \left(I,H^{4,\rho}\right)}^{\alpha}\right) 
  \left\|u^1-u^2\right\|_{L^\gamma \left(I,L^\rho\right)},\notag
\end{eqnarray}
	and
	\begin{eqnarray}\label{11171}
	&&\sup_{(q,r)\in\Lambda_b}\|\partial_t\left(Su^1\right)-\partial_t\left(Su^2\right)\|_{L^q\left(I,L^r\right)} \\
	&\lesssim&\left\||\phi^1|^\alpha \phi^1-|\phi^2|^\alpha\phi^2\right\|_{L^2}+\|\phi^1-\phi^2\|_{H^4}\notag 
	 +\left\|\partial_t \left(\left|u^1\right|^{\alpha}u^1-\left|u^2\right|^{\alpha}u^2\right)\right\|_{L^{\gamma'}\left(I,L^{\rho'}\right)}
	\end{eqnarray}
	where we used the Strichartz's estimate (\ref{sz}). Applying H\"older's inequality and Sobolev's embedding  $H^4\hookrightarrow L^{2(\alpha+1)}$, we conclude that
	\begin{eqnarray}\label{11303}
	\left\||\phi^1|^\alpha \phi^1-|\phi^2|^\alpha\phi^2\right\|_{L^2}&\lesssim &\left(\|\phi^1\|_{L^{2(\alpha+1)}}^\alpha+\|\phi^1\|_{L^{2(\alpha+1)}}^\alpha\right)\|\phi^1-\phi^2\|_{L^{2(\alpha+1)}}\notag\\
	&\lesssim& \left(\|\phi^1\|_{H^4}^\alpha+\|\phi^2\|_{H^4}^\alpha\right)\|\phi^1-\phi^2\|_{H^4}.
	\end{eqnarray}
	Our next step is to control the last term in (\ref{11171}).
	Note that  $\partial_t \left(|u|^\alpha u\right)=\frac{\alpha+2}2|u|^\alpha\partial_t u+\frac\alpha2|u|^{\alpha-2}u^2\partial_t\overline u$, we have by applying the inequality (\ref{121710}) below
	\begin{eqnarray*}
		\left|\partial_t \left(\left|u^1\right|^{\alpha}u^1-\left|u^2\right|^{\alpha}u^2\right)\right|\lesssim \left|u^1\right|^{\alpha}\left|\partial_tu^1-\partial_tu^2\right|+F(u^1,u^2)\left|\partial_tu^2\right|,
	\end{eqnarray*}
	where
	\begin{eqnarray}\label{11302}
	F(u^1,u^2)=\begin{cases}
	\left(\left|u^1\right|^{\alpha-1}+\left|u^2\right|^{\alpha-1}\right)\left|u^1-u^2\right| \text{ if } \alpha\ge1,\\
	\left|u^1-u^2\right|^{\alpha} \text{ if } 0<\alpha\le1.
	\end{cases}
	\end{eqnarray}
	Using H\"older's inequality and the  embedding  $H^{4,\rho}\hookrightarrow L^{\fc{\rho\alpha}{\rho-2}}$, we deduce that
	\begin{eqnarray}\label{11301}
	&&\left\||u^1|^\alpha\left(\partial_tu^1-\partial_tu^2\right)\right\|_{L^{\gamma'}\left(I,L^{\rho'}\right)}\notag\\
	&\lesssim& T^{1-\fc{\alpha+2}\gamma}\|u^1\|_{L^\gamma \left(I,H^{4,\rho}\right)}^\alpha \left\|\partial_tu^1-\partial_tu^2\right\|_{L^{\gamma}\left(I,L^{\rho}\right)}.
	\end{eqnarray}
	By (\ref{1216})--(\ref{11301}), the proof of  (\ref{12301})  reduces to control  $\left\|\partial_tu^2F(u^1,u^2)\right\|_{L^{\gamma'}\left(I,L^{\rho'}\right)}$.

	We decompose $\left|\partial_tu^2\right|=\left(\partial_tu^2\right)^R+\left(\partial_tu^2\right)_R$, where $\left(\partial_tu^2\right)_R=\left|\partial_tu^2\right|-\left(\partial_tu^2\right)^R$. Since $F(u^1,u^2)\lesssim \left|u^1\right|^{\alpha}+\left|u^2\right|^{\alpha}$ by (\ref{11302}), it  follows  from H\"older's inequality and Sobolev's embedding $H^{4,\rho}\hookrightarrow L^\frac{\rho\alpha}{\rho-2}$ that
	\begin{eqnarray*}
		&&\left\|\left(\partial_tu^2\right)^RF\left(u^1,u^2\right)\right\|_{L^{\gamma' }\left(I,L^{\rho'}\right)}\notag\\
		&\lesssim& T^{1-\frac{\alpha+2}{\gamma }}\left\|\left(\partial_tu^2\right)^R\right\|_{L^\gamma \left(I,L^\rho\right)}\left(\left\|u^1\right\|_{L^\gamma \left(I,H^{4,\rho}\right)}^\alpha+\left\|u^2\right\|_{L^\gamma  \left(I,H^{4,\rho}\right)}^\alpha\right).
	\end{eqnarray*}
	Our final step is  to estimate $\left\|\left(\partial_tu^2\right)_RF\left(u^1,u^2\right)\right\|_{L^{\gamma'}\left(I,L^{\rho'}\right)}$. We consider  two cases.
	
	Suppose   $0<\alpha\le1$. Let $\widetilde\rho_1=\frac{\rho}{\rho-\left(\alpha+1\right)}$.   Then it is easy to check that $\fc1{\rho'}=\fc\alpha\rho+\fc1{\widetilde\rho_1}$ and  $\rho<\widetilde \rho_1<\infty$.  This together with $\left|\left(\partial_tu^2\right)_R\right|\le R$ yields
	\begin{eqnarray}\label{121610}
	\|\left(\partial_tu^2\right)_R\|_{\widetilde\rho_1}\lesssim  R^{\fc{\widetilde\rho_1-\rho}{\widetilde\rho_1}}\|\partial_tu^2\|_{L^\rho}^{\fc{\rho}{\widetilde\rho_1}}.
	\end{eqnarray}
	It now  follows from (\ref{121610}) and  H\"older's inequality in time that
	\begin{equation}\label{11172}
	\|\left(\partial_tu^2\right)_R\|_{L^\gamma\left(I,L^{\widetilde\rho_1}\right)}  
 \lesssim T^{\fc{\widetilde\rho_1-\rho}{\gamma\widetilde\rho_1}}R^{\fc{\widetilde\rho_1-\rho}{\widetilde\rho_1}}\|\partial_tu^2\|_{L^{\gamma}\left(I,L^\rho\right)}^{\fc{\rho}{\widetilde\rho_1}}.
	\end{equation}
	This inequality together with (\ref{11302}) and H\"older's inequality yields
	\begin{eqnarray*}
		&&\left\|\left(\partial_tu^2\right)_RF(u^1,u^2)\right\|_{L^{\gamma'} \left(I,L^{\rho'}\right)}\notag\\
		&\lesssim& T^{1-\fc{\alpha+2}\gamma}\|\left(\partial_tu^2\right)_R\|_{L^\gamma\left(I,L^{\widetilde\rho_1}\right)}\|u^1-u^2\|_{L^\gamma\left(I,L^\rho\right)}^\alpha\\
		&\lesssim& T^{1-\fc{\alpha+2}\gamma+\fc{\widetilde\rho_1-\rho}{\gamma\widetilde\rho_1}}R^{\fc{\widetilde\rho_1-\rho}{\widetilde\rho_1}}\|\partial_tu^2\|_{L^{\gamma}\left(I,L^\rho\right)}^{\fc{\rho}{\widetilde\rho_1}}\|u^1-u^2\|_{L^\gamma\left(I,L^\rho\right)}^\alpha.
	\end{eqnarray*}

	We turn now to the case $\alpha>1$. Let $\widetilde\rho_2$ be given by the equation $\fc1{\rho'}=(\alpha-1)\left(\fc1\rho-\fc4N\right)+\fc1\rho+\fc1{\widetilde\rho_2}$. Then it is easy to check that $\rho<\widetilde \rho_2<\infty$.  Using the same method as that used to derive (\ref{11172}), we obtain
	\begin{eqnarray}\label{11202}
	\|\left(\partial_tu^2\right)_R\|_{L^\gamma\left(I,L^{\widetilde\rho_2}\right)}&\lesssim&T^{\fc{\widetilde\rho_2-\rho}{\gamma\widetilde\rho_2}}R^{\fc{\widetilde\rho_2-\rho}{\widetilde\rho_2}}\|\partial_tu^2\|_{L^{\gamma}\left(I,L^\rho\right)}^{\fc{\rho}{\widetilde\rho_2}}.
	\end{eqnarray}
	It now  follows from (\ref{11302}), (\ref{11202}), H\"older's inequality and Sobolev's embedding  $H^{4,\rho}\hookrightarrow L^\frac{\rho\alpha}{\rho-2}$ that
	\begin{eqnarray}\label{11203}
	&&\left\|\left(\partial_tu^2\right)_RF(u^1,u^2)\right\|_{L^{\gamma'} \left(I,L^{\rho'}\right)}\notag\\
	&\lesssim& T^{1-\fc{\alpha+2}\gamma}\|\partial_tu^2\|_{L^\gamma \left(I,L^{\widetilde\rho_2}\right)}\left(\|u^1\|_{L^\gamma \left(I,H^{4,\rho}\right)}^{\alpha-1}+\|u^2\|_{L^\gamma \left(I,H^{4,\rho}\right)}^{\alpha-1}\right)\|u^1-u^2\|_{L^\gamma \left(I,L^\rho\right)}\notag\\
	&\lesssim& T^{1-\fc{\alpha+2}\gamma+\fc{\widetilde\rho_2-\rho}{\gamma\widetilde\rho_2}}R^{\fc{\widetilde\rho_2-\rho}{\widetilde\rho_2}}\|\partial_tu^2\|_{L^{\gamma}\left(I,L^\rho\right)}^{\fc{\rho}{\widetilde\rho_2}}\left(\|u^1\|_{L^\gamma \left(I,H^{4,\rho}\right)}^{\alpha-1}+\|u^2\|_{L^\gamma \left(I,H^{4,\rho}\right)}^{\alpha-1}\right)\notag\\
	&&\qquad\times\|u^1-u^2\|_{L^\gamma \left(I,L^\rho\right)}.
	\end{eqnarray}
	
	This completes the proof of Lemma \ref{l1}.
\end{proof}
 In the process of the fixed-point argument, one first obtains estimates of $\partial_tu$
by time differentiation. Next, one obtains estimates of $\Delta^2u$ by using the equation
and estimates of $\partial_tu$. Thus, one needs to estimate $\left|u\right|^{\alpha}u$. To this end, we establish the following two Lemmas.
\begin{lemma}\label{l3}
	Assume that $u\in L^\gamma \left(I,L^{q_0}\right)\cap H^{1,\gamma}\left(I,L^\rho\right)$ with $u(0)=\phi\in H^4$, where $q_0$ is defined in (\ref{12171}). Then we have  $|u|^\alpha u\in C\left(I,L^2\right)$ and
	\begin{eqnarray}\label{12174}
	\left\|\left|u\right|^{\alpha}u\right\|_{L^\infty \left(I,L^2\right)}\lesssim \left\|\phi\right\|_{H^4}^{\alpha+1}+T^{\left(\alpha+1\right)\left(\frac{1}{\alpha+2}-\frac{1}{\gamma }\right)}\left\|u\right\|_{L^\gamma \left(I,L^{q_0}\right)}^{\frac{\left(\alpha+1\right)^2}{\alpha+2}}\left\|\partial_tu\right\|_{L^\gamma \left(I,L^\rho\right)}^{\frac{\alpha+1}{\alpha+2}}.
	\end{eqnarray}
	
	\begin{proof}
		We first show that $|u|^\alpha u\in C\left(I,L^2\right)$. Since $$\left||u|^\alpha u-|v|^\alpha v\right|\lesssim \left||u|^{\alpha+1} u-|v|^{\alpha+1} v\right|^{\fc{\alpha+1}{\alpha+2}}$$ (see Lemma 2.3 in \cite{Ca4}), it suffices to prove  that  $|u|^{\alpha+1} u\in C\left(I,L^{\fc{2(\alpha+1)}{\alpha+2}}\right)$.
		Since $\frac{\alpha+2}{2\left(\alpha+1\right)}=\frac{\alpha+1}{q_0}+\frac{1}{\rho}$ by  (\ref{12171}), it follows from H\"older's inequality that
		\begin{eqnarray}\label{1211}
		\left\|\partial_t\left(|u|^{\alpha+1}u\right)\right\|_{L^1 \left(I,L^{\fc{2(\alpha+1)}{\alpha+2}}\right)}&\lesssim & T^{1-\frac{\alpha+2}{\gamma }}\left\|u\right\|_{L^\gamma \left(I,L^{q_0}\right)}^{\alpha+1}\left\|\partial_tu\right\|_{L^\gamma \left(I,L^\rho\right)},
		\end{eqnarray}
		which implies that $\partial_t \left(\left|u\right|^{\alpha+1}u\right)\in L^1 \left(I,L^{\frac{2\left(\alpha+1\right)}{\alpha+2}}\right)$. On the other hand,  since  $\left|\phi\right|^{\alpha+1}\phi\in L^{\frac{2\left(\alpha+1\right)}{\alpha+2}}$ by Sobolev's embedding $H^4\hookrightarrow L^{2\left(\alpha+1\right)}$, we can apply (\ref{1211}) and the  Fundamental Theorem of Calculus
		\begin{eqnarray}\label{12175}
		\left|u\right|^{\alpha+1}u=\left|\phi\right|^{\alpha+1}\phi+\int_{0}^{t}\partial_s \left(\left|u\right|^{\alpha+1}u\left(s\right)\right)\mathrm{d}s
		\end{eqnarray}
		to  obtain $|u|^{\alpha+1} u\in C\left(I,L^{\fc{2(\alpha+1)}{\alpha+2}}\right)$, which in turn implies that $|u|^\alpha u\in C\left(I,L^2\right)$.
		
		Estimate (\ref{12174}) is now a consequence of  (\ref{1211}), (\ref{12175}) and the inequality $$\left\|\left|u\right|^{\alpha}u\right\|_{L^\infty \left(I,L^2\right)}\lesssim \left\|\left|u\right|^{\alpha+1}u\right\|_{L^\infty \left(I,L^{\frac{2\left(\alpha+1\right)}{\alpha+2}}\right)}^{\frac{\alpha+1}{\alpha+2}}.$$	\end{proof}
\end{lemma}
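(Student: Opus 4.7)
The plan is to prove the estimate and the continuity by first working at the higher-power level $|u|^{\alpha+1}u$ in the space $L^{\frac{2(\alpha+1)}{\alpha+2}}$, and then recovering the $L^2$ bound on $|u|^\alpha u$ at the end via the elementary pointwise identity $\||u|^\alpha u\|_{L^2} = \|u\|_{L^{2(\alpha+1)}}^{\alpha+1} = \||u|^{\alpha+1}u\|_{L^{\frac{2(\alpha+1)}{\alpha+2}}}^{\frac{\alpha+1}{\alpha+2}}$. The continuity statement is handled by the same reduction: via the pointwise inequality $\bigl||u|^\alpha u - |v|^\alpha v\bigr| \lesssim \bigl||u|^{\alpha+1}u - |v|^{\alpha+1}v\bigr|^{\frac{\alpha+1}{\alpha+2}}$ of \cite{Ca4}, continuity of $|u|^{\alpha+1}u$ with values in $L^{\frac{2(\alpha+1)}{\alpha+2}}$ will imply continuity of $|u|^\alpha u$ with values in $L^2$.

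The first main step will be a pointwise derivative bound. Formally, $\partial_t(|u|^{\alpha+1}u) = \tfrac{\alpha+3}{2}|u|^{\alpha+1}\partial_t u + \tfrac{\alpha+1}{2}|u|^{\alpha-1}u^2\,\partial_t\bar u$, so $|\partial_t(|u|^{\alpha+1}u)| \lesssim |u|^{\alpha+1}|\partial_t u|$. The defining relation $\frac{\alpha+2}{2(\alpha+1)}=\frac{\alpha+1}{q_0}+\frac{1}{\rho}$ is precisely what is needed to run H\"older in space: $\||u|^{\alpha+1}\partial_t u\|_{L^{\frac{2(\alpha+1)}{\alpha+2}}} \le \|u\|_{L^{q_0}}^{\alpha+1}\|\partial_t u\|_{L^\rho}$. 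Then H\"older in time with conjugate weights $\frac{\gamma}{\alpha+1}$, $\gamma$, and $\frac{\gamma}{\gamma-(\alpha+2)}$ yields the bound $\|\partial_t(|u|^{\alpha+1}u)\|_{L^1(I,L^{\frac{2(\alpha+1)}{\alpha+2}})} \lesssim T^{1-\frac{\alpha+2}{\gamma}}\|u\|_{L^\gamma(I,L^{q_0})}^{\alpha+1}\|\partial_t u\|_{L^\gamma(I,L^\rho)}$ of (\ref{1211}).

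Next I will invoke the fundamental theorem of calculus. Since $\phi\in H^4\hookrightarrow L^{2(\alpha+1)}$ (embedding valid because $\alpha$ lies in the $H^4$-subcritical or critical range for $N\ge 9$), we have $|\phi|^{\alpha+1}\phi\in L^{\frac{2(\alpha+1)}{\alpha+2}}$ with norm controlled by $\|\phi\|_{H^4}^{\alpha+2}$. Combined with the $L^1_t$ bound on the time derivative above, the identity $|u|^{\alpha+1}u = |\phi|^{\alpha+1}\phi+\int_0^t\partial_s(|u|^{\alpha+1}u)\,ds$ places $|u|^{\alpha+1}u$ in $C(I,L^{\frac{2(\alpha+1)}{\alpha+2}})$ and gives $\||u|^{\alpha+1}u\|_{L^\infty(I,L^{\frac{2(\alpha+1)}{\alpha+2}})} \lesssim \|\phi\|_{H^4}^{\alpha+2}+T^{1-\frac{\alpha+2}{\gamma}}\|u\|_{L^\gamma(I,L^{q_0})}^{\alpha+1}\|\partial_t u\|_{L^\gamma(I,L^\rho)}$. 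Raising both sides to the power $\frac{\alpha+1}{\alpha+2}<1$ and using subadditivity $(a+b)^\theta\le a^\theta+b^\theta$ produces (\ref{12174}): the $T$-exponent becomes $(1-\frac{\alpha+2}{\gamma})\cdot\frac{\alpha+1}{\alpha+2}=(\alpha+1)(\frac{1}{\alpha+2}-\frac{1}{\gamma})$, and the factor $\|\phi\|_{H^4}^{(\alpha+2)\cdot\frac{\alpha+1}{\alpha+2}}=\|\phi\|_{H^4}^{\alpha+1}$ is exactly the one in the statement.

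The calculation is largely H\"older-Sobolev bookkeeping; the only delicate point is to verify that the H\"older exponents in time are not degenerate, i.e.\ that $\frac{\alpha+2}{\gamma}\le 1$, or equivalently $\gamma\ge\alpha+2$. This follows at once from $\gamma=\frac{8(\alpha+2)}{(N-8)\alpha}$ together with the standing assumption $(N-8)\alpha\le 8$. The bounds $q_0<\infty$ and $H^{4,\rho}\supset L^{q_0}$-type relations have already been recorded in (\ref{12172})--(\ref{a1}), so no additional admissibility check is needed beyond the initial setup.
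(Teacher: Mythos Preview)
Your proposal is correct and follows essentially the same approach as the paper: reduce to $|u|^{\alpha+1}u$ in $L^{\frac{2(\alpha+1)}{\alpha+2}}$ via the pointwise inequality from \cite{Ca4}, bound $\partial_t(|u|^{\alpha+1}u)$ in $L^1\bigl(I,L^{\frac{2(\alpha+1)}{\alpha+2}}\bigr)$ by H\"older using the defining relation for $q_0$, apply the fundamental theorem of calculus with initial data controlled by $H^4\hookrightarrow L^{2(\alpha+1)}$, and then raise to the power $\tfrac{\alpha+1}{\alpha+2}$ to recover the $L^2$ estimate on $|u|^\alpha u$. Your additional explicit check that $\gamma\ge\alpha+2$ (so the time H\"older is non-degenerate) is a nice clarification but otherwise the arguments coincide.
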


\begin{lemma}\label{l4}
	Assume $u\in L^\gamma \left(I,L^{q_0}\right)\cap L^\gamma \left(I,L^{p_0}\right)\cap H^{1,\gamma }\left(I,L^\rho\right)$ with $u(0)=\phi\in H^4$, where $q_0,p_0$ are  defined in (\ref{12171}) and (\ref{12201}). Then we have
	\begin{eqnarray}
	\left\|\left|u\right|^{\alpha}u\right\|_{L^\gamma \left(I,L^\rho\right)}&\lesssim& T^{\alpha \left(\frac{1}{\alpha+2}-\frac{1}{\gamma}\right)}\left(\left\|u\right\|_{L^\gamma \left(I,L^{q_0}\right)\cap L^\gamma \left(I,L^{p_0}\right)}^{\alpha+1}+\left\|\partial_tu\right\|_{L^\gamma \left(I,L^\rho\right)}^{\alpha+1}\right)\notag\\
	&&+\left\|\phi\right\|_{H^4}^\alpha F\left(\phi,T\right)\notag.
	\end{eqnarray}
\end{lemma}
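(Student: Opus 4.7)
The plan is to combine the Fundamental Theorem of Calculus, H\"older's inequality, and Sobolev/Strichartz bounds to recover the two structurally different contributions on the right-hand side. I decompose
\[
|u|^\alpha u(t)=|\phi|^\alpha\phi+\int_0^t\partial_s(|u|^\alpha u)(s)\,ds,
\]
and treat each part separately. The constant-in-time piece $|\phi|^\alpha\phi$ satisfies $\||\phi|^\alpha\phi\|_{L^\rho}\lesssim\|\phi\|_{H^4}^{\alpha+1}$ by the H\"older identity (\ref{12201}) together with the Sobolev embeddings $H^4\hookrightarrow L^{2(\alpha+1)}$ (valid in the subcritical/critical range) and $H^4\hookrightarrow L^{p_0}$ (cf.\ (\ref{a1})), so its $L^\gamma(I,L^\rho)$ norm is $\lesssim T^{1/\gamma}\|\phi\|_{H^4}^{\alpha+1}$; this is matched against the $\|\phi\|_{H^4}^\alpha F(\phi,T)$ term on the right by using the component $\|e^{it(\Delta^2+\mu\Delta)}\phi\|_{L^\gamma(I,H^{4,\rho})}$ of $F(\phi,T)$ and its continuity at $t=0$.

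For the integral piece, the pointwise bound $|\partial_s(|u|^\alpha u)|\lesssim|u|^\alpha|\partial_s u|$ together with H\"older's inequality in space---applied via the identities (\ref{12171}) and (\ref{12201}) and the Sobolev embedding (\ref{a1})---produces an $L^\rho$-estimate of the form $\||u|^\alpha|\partial_s u|\|_{L^\rho}\lesssim\|u\|_{L^{q_0}\cap L^{p_0}}^\alpha\|\partial_s u\|_{L^\rho}$. Minkowski in $L^\rho$, then H\"older in time with $\alpha+2$ factors in $L^\gamma$ (one for $\partial_s u$ and $\alpha+1$ for $u$, contributing $T^{1-(\alpha+2)/\gamma}$), combined with the outer $L^\gamma(I)$ norm of $\int_0^t\cdot\,ds$ (contributing $T^{1/\gamma}$), and a final Young's inequality step of the form $a^{(\alpha+1)/(\alpha+2)}b^{1/(\alpha+2)}\leq\frac{\alpha+1}{\alpha+2}a^{\alpha+1}+\frac{1}{\alpha+2}b^{\alpha+1}$ to symmetrize the asymmetric product $\|u\|^\alpha\|\partial_t u\|$ into the sum $\|u\|^{\alpha+1}+\|\partial_t u\|^{\alpha+1}$, produces the prefactor $T^{\alpha(1/(\alpha+2)-1/\gamma)}$ as claimed.

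The main obstacle I expect is the precise bookkeeping of exponents and $T$-powers: the identities (\ref{12171}) and (\ref{12201}) that define $q_0$ and $p_0$ are engineered so that the biharmonic Sobolev scaling closes for the spatial H\"older split at the pair $(q_0,\rho)$ (this is what drives the $(\alpha+1)$-homogeneity in $u$ to match the $(\alpha+2)$-th root that appears in $T^{\alpha(1/(\alpha+2)-1/\gamma)}$), and the $(\alpha+2)$-factor temporal H\"older in $L^\gamma$ mirrors the $(\alpha+1)$ copies of $u$ together with the $\partial_tu$ factor. Verifying that the spatial H\"older, the Minkowski integration, the temporal H\"older and the Young symmetrization compose cleanly into the sharp $T^{\alpha(1/(\alpha+2)-1/\gamma)}$ prefactor and the symmetric $(\alpha+1)$-th power sum is the key computational check; a secondary subtlety is ensuring the constant-in-time $|\phi|^\alpha\phi$ contribution and the Strichartz norms in $F(\phi,T)$ interact compatibly through Sobolev.
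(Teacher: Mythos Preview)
Your spatial H\"older split for the integral piece does not close. You claim
\[
\||u|^\alpha|\partial_s u|\|_{L^\rho}\lesssim\|u\|_{L^{q_0}\cap L^{p_0}}^\alpha\|\partial_s u\|_{L^\rho},
\]
but any H\"older decomposition of the form $\frac1\rho=\frac\alpha q+\frac1\rho$ forces $\frac\alpha q=0$, which is impossible. Neither identity (\ref{12171}) nor (\ref{12201}) gives an exponent relation of the shape $\frac1\rho=\frac\alpha\cdot(\text{something})+\frac1\rho$. In fact (\ref{12171}) reads $\frac{\alpha+2}{2(\alpha+1)}=\frac{\alpha+1}{q_0}+\frac1\rho$, which places $|u|^{\alpha+1}|\partial_s u|$ (not $|u|^\alpha|\partial_s u|$) into $L^{2(\alpha+1)/(\alpha+2)}$ (not $L^\rho$). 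This is precisely why the paper does \emph{not} apply the Fundamental Theorem of Calculus to $|u|^\alpha u$ directly: it applies it to $|u|^{\alpha+1}u$ in $L^{2(\alpha+1)/(\alpha+2)}$, obtaining via (\ref{1211}) control on $\|u\|_{L^\infty(I,L^{2(\alpha+1)})}^\alpha$, and only then uses (\ref{12201}) in the form $\||u|^\alpha u\|_{L^\rho}\lesssim\|u\|_{L^{2(\alpha+1)}}^\alpha\|u\|_{L^{p_0}}$ to reach $L^\rho$. Your description with ``$\alpha+1$ copies of $u$'' in the temporal H\"older actually matches the paper's $|u|^{\alpha+1}u$ route, but you have attached it to the wrong object.

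A second gap is the treatment of the initial-data contribution. The bound $\||\phi|^\alpha\phi\|_{L^\gamma(I,L^\rho)}\lesssim T^{1/\gamma}\|\phi\|_{H^4}^{\alpha+1}$ cannot in general be absorbed into $\|\phi\|_{H^4}^\alpha F(\phi,T)$: in the critical case $\alpha=\frac8{N-8}$ one needs $F(\phi,T)$ small without any smallness on $T^{1/\gamma}\|\phi\|_{H^4}$, and there is no inequality $T^{1/\gamma}\|\phi\|_{H^4}\lesssim F(\phi,T)$ (the right-hand side involves Strichartz norms of the free evolution, which can be small even when $\|\phi\|_{H^4}$ is large). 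The paper avoids this by subtracting off $e^{it(\Delta^2+\mu\Delta)}\phi$ rather than $\phi$: setting $v=u-e^{it(\Delta^2+\mu\Delta)}\phi$ reduces to the $\phi=0$ case for $v$, and the residual term $\||e^{it(\Delta^2+\mu\Delta)}\phi|^\alpha e^{it(\Delta^2+\mu\Delta)}\phi\|_{L^\gamma(I,L^\rho)}$ is then bounded directly by $\|\phi\|_{H^4}^\alpha F(\phi,T)$ via (\ref{11217}).
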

\begin{proof}
	We assume first that  $\phi=0$. Since $\frac{1}{\rho}=\frac{\alpha}{2(\alpha+1)}+\frac{1}{p_0}$ by (\ref{12201}), it follows from  H\"older's inequality  that
	\begin{eqnarray}\label{11215}
	\left\|\left|u\right|^{\alpha}u\right\|_{L^\gamma \left(I,L^\rho\right)}&\lesssim& \left\|\left\|u\right\|^\alpha_{2\left(\alpha+1\right)}\left\|u\right\|_{p_0}\right\|_{L^\gamma \left(I\right)}\notag\\
	&\lesssim& \left\|u\right\|^\alpha_{L^\infty \left(I,L^{2\left(\alpha+1\right)}\right)}\left\|u\right\|_{L^\gamma \left(I,L^{p_0}\right)}.
	\end{eqnarray}
	Our next step is to  estimate $\left\|u\right\|^\alpha_{L^\infty \left(I,L^{2\left(\alpha+1\right)}\right)}$. Since $u(0)=\phi=0$, we see that
	\begin{eqnarray}
	\left\|u\right\|_{2(\alpha+1)}^{\alpha+2}&=&\left\|\left|u\right|^{\alpha+1}u(t)\right\|_{\frac{2\left(\alpha+1\right)}{\alpha+2}}\notag\\
	&=&\left\|\int_{0}^{t}\partial_s \left(\left|u\right|^{\alpha+1}u(s)\right)\mathrm{d}s\right\|_{\frac{2\left(\alpha+1\right)}{\alpha+2}}\notag\\
	&\lesssim& \left\|\partial_t \left(\left|u\right|^{\alpha+1}u\right)\right\|_{L^1 \left(I,L^{\frac{2\left(\alpha+1\right)}{\alpha+2}}\right)}.\notag
	\end{eqnarray}
	This inequality together with (\ref{1211}) yields
	\begin{eqnarray}\label{1212}
	\left\|u\right\|^\alpha_{L^\infty \left(I,L^{2\left(\alpha+1\right)}\right)}\lesssim T^{\alpha\left(\frac{1}{\alpha+2}-\frac{1}{\gamma }\right)}\left\|u\right\|_{L^\gamma \left(I,L^{q_0}\right)\cap L^\gamma \left(I,L^{p_0}\right)}^{\frac{\alpha \left(\alpha+1\right)}{\alpha+2}}\left\|\partial_tu\right\|_{L^\gamma \left(I,L^\rho\right)}^{\frac{\alpha}{\alpha+2}}.
	\end{eqnarray}
	It now follows from  (\ref{11215}), (\ref{1212}) and Young's inequality $\left|x^{\frac{\alpha^2+2\alpha+2}{\alpha+2}}y^\frac{\alpha}{\alpha+2}\right|\lesssim \left|x\right|^{\alpha+1}+\left|y\right|^{\alpha+1}$ that   Lemma \ref{l4} holds in the case $\phi=0$.
	
	We now consider the  case $\phi\neq0$. Let $v(t)=u(t)-e^{it(\Delta^2+\mu\Delta)}\phi$. It follows from the elementary inequality $\left|x+y\right|^{\alpha+1}\lesssim \left|x\right|^{\alpha+1}+\left|y\right|^{\alpha+1}$ that
	\begin{eqnarray}\label{11216}
	&&\left\|\left|u\right|^{\alpha}u\right\|_{L^\gamma \left(I,L^\rho\right)}=\left\|u\right\|_{L^{\left(\alpha+1\right)\gamma } \left(I,L^{\left(\alpha+1\right)\rho}\right)}^{\alpha+1}\notag\\
	&\lesssim &\left\|v\right\|_{L^{\left(\alpha+1\right)\gamma } \left(I,L^{\left(\alpha+1\right)\rho}\right)}^{\alpha+1}+\left\|e^{it(\Delta^2+\mu\Delta)}\phi\right\|_{L^{\left(\alpha+1\right)\gamma } \left(I,L^{\left(\alpha+1\right)\rho}\right)}^{\alpha+1}\notag\\
	&\lesssim &\left\|\left|v\right|^{\alpha}v\right\|_{L^\gamma \left(I,L^\rho\right)}+\left\|\left|e^{it(\Delta^2+\mu\Delta)}\phi\right|^{\alpha}e^{it(\Delta^2+\mu\Delta)}\phi\right\|_{L^\gamma \left(I, L^\rho\right)}.
	\end{eqnarray}
	Since $v(0)=0$, we can apply the results established in the previous case to  obtain
	\begin{eqnarray}\label{12132}
	&&\left\|\left|v\right|^{\alpha}v\right\|_{L^\gamma \left(I,L^\rho\right)}\notag\\
	&\lesssim& T^{\alpha \left(\frac{1}{\alpha+2}-\frac{1}{\gamma }\right)}\left(\left\|v\right\|_{L^\gamma \left(I,L^{q_0}\right)\cap L^\gamma \left(I,L^{p_0}\right)}^{\alpha+1}+\left\|\partial_tv\right\|_{L^\gamma \left(I,L^\rho\right)}^{\alpha+1}\right).
	\end{eqnarray}
	Moreover, since  $v(t)=u(t)-e^{it(\Delta^2+\mu\Delta)}\phi$,  we deduce from (\ref{12132}) and the elementary inequality $\left|x+y\right|^{\alpha+1}\lesssim \left|x\right|^{\alpha+1}+\left|y\right|^{\alpha+1}$ again  that
	\begin{eqnarray}\label{11218}
	\left\|\left|v\right|^{\alpha}v\right\|_{L^\gamma \left(I,L^\rho\right)}
	&\lesssim& T^{\alpha \left(\frac{1}{\alpha+2}-\frac{1}{\gamma }\right)}\left(\left\|u\right\|_{L^\gamma \left(I,L^{q_0}\right)\cap L^\gamma \left(I,L^{p_0}\right)}^{\alpha+1}\right. \notag\\
	&&\left.\qquad+\left\|\partial_tu\right\|_{L^\gamma \left(I,L^\rho\right)}^{\alpha+1}+F\left(\phi,T\right)^{\alpha+1}\right),
	\end{eqnarray}
	where we used the embedding (\ref{a1}) to control $L^\gamma \left(I,L^{q_0}\right)\cap L^\gamma \left(I,L^{p_0}\right)$ norm of $e^{it(\Delta^2+\mu\Delta)}\phi$. On the other hand, applying the same method as that used to derive (\ref{11215}), we obtain
	\begin{eqnarray}\label{11217}
	&&\left\|\left|e^{it(\Delta^2+\mu\Delta)}\phi\right|^{\alpha}e^{it(\Delta^2+\mu\Delta)}\phi\right\|_{L^\gamma \left(I,L^\rho\right)}\notag\\
	&\lesssim& \left\|e^{it(\Delta^2+\mu\Delta)}\phi\right\|_{L^\infty \left(I,L^{2\left(\alpha+1\right)}\right)}^\alpha \left\|e^{it(\Delta^2+\mu\Delta)}\phi\right\|_{L^\gamma \left(I,L^{p_0}\right)}\notag\\
	&\lesssim& \left\|\phi\right\|_{H^4}^\alpha F\left(\phi,T\right),
	\end{eqnarray}
	where  we  used the embedding  (\ref{a1}) and Sobolev's embedding $H^4\hookrightarrow L^{2\left(\alpha+1\right)}$ in the last inequality.

	Combining (\ref{11216}), (\ref{11218}) and (\ref{11217}), we obtain Lemma \ref{l4} in the case $\phi\neq0$.
\end{proof}
The next Lemma is the key ingredient in our proof of the local existence and continuous dependence.
\begin{lemma}\label{l2}
	Assume  that $u^1,u^2\in L^\infty \left(I,H^4\right)\cap L^\gamma \left(I,H^{4,\rho}\right)\cap H^{1,\gamma }\left(I,L^\rho\right)$, then we have
	\begin{eqnarray}\label{111620}
	&&\left\||u^1|^\alpha u^1-|u^2|^\alpha u^2\right\|_{L^\infty \left(I,L^2\right)}\notag\\
	&\lesssim&T^{\frac{1}{\alpha+2}-\frac{1}{\gamma }}\left(\|u^1\|_{L^\infty \left(I,H^4\right)}^\alpha+\|u^2\|_{L^\infty \left(I,H^4\right)}^\alpha\right)\notag\\
	&&\times\left\|u^1-u^2\right\|_{L^\gamma \left(I,H^{4,\rho}\right)}^{\frac{ \alpha+1}{\alpha+2}}\left\|\partial_tu^1-\partial_tu^2\right\|_{L^\gamma \left(I,L^\rho\right)}^{\frac{1}{\alpha+2}},
	\end{eqnarray}
	and
	\begin{eqnarray}\label{111630}
	&&\left\||u^1|^\alpha u^1-|u^2|^\alpha u^2\right\|_{L^2\left(I,L^{\fc{2N}{N-4}}\right)}\notag\\
	&\lesssim& T^{\fc1{\alpha+2}-\fc1\gamma}\left(\|u^1\|_{L^\infty \left(I,H^4\right)\cap L^\gamma \left(I,B^4_{\rho,2}\right)}^\alpha+\|u^2\|_{L^\infty \left(I,H^4\right)\cap L^\gamma \left(I,B^4_{\rho,2}\right)}^\alpha\right)\notag\\
	&&\times\|u^1-u^2\|_{L^\gamma \left(I,H^{4,\rho}\right)}.
	\end{eqnarray}
\end{lemma}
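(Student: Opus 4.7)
The plan for (\ref{111620}) is to start from the elementary pointwise inequality $|g(u^1) - g(u^2)| \lesssim (|u^1|^\alpha + |u^2|^\alpha)|w|$ (where $w = u^1 - u^2$ and $g(u) = |u|^\alpha u$, and with the analogous $|w|^{\alpha+1}$ bound when $0 < \alpha \le 1$), take its $L^2_x$ norm via H\"older with the split $\tfrac{1}{2} = \tfrac{\alpha}{2(\alpha+1)} + \tfrac{1}{2(\alpha+1)}$, and apply the Sobolev embedding $H^4(\mathbb{R}^N) \hookrightarrow L^{2(\alpha+1)}(\mathbb{R}^N)$ (valid throughout the range $(N-8)\alpha \le 8$) to control the $u^i$ factors. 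This reduces matters pointwise in $t$ to
\[
\|g(u^1)(t) - g(u^2)(t)\|_{L^2_x} \lesssim (\|u^1(t)\|^\alpha_{H^4} + \|u^2(t)\|^\alpha_{H^4}) \|w(t)\|_{L^{2(\alpha+1)}_x}.
\]

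The crucial ingredient is then the Gagliardo--Nirenberg interpolation
\[
\|w(t)\|_{L^{2(\alpha+1)}_x} \lesssim \|w(t)\|^{(\alpha+1)/(\alpha+2)}_{H^{4,\rho}_x} \|w(t)\|^{1/(\alpha+2)}_{L^\rho_x},
\]
applied with exponent $\theta = (\alpha+1)/(\alpha+2)$. A direct computation with $\rho = N(\alpha+2)/(N+4\alpha)$ verifies that this $\theta$ satisfies the Sobolev scaling identity $\tfrac{1}{2(\alpha+1)} = \tfrac{1}{\rho} - \tfrac{4\theta}{N}$ exactly in the critical case $\alpha = 8/(N-8)$ and as a strict inequality in the subcritical case, the latter being absorbable into a positive $T$-power through an intermediate embedding $H^{4,\rho} \hookrightarrow L^q$ with $q > 2(\alpha+1)$.

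The last step is to upgrade the pointwise-in-$t$ bound to the $L^\infty_t L^2_x$ estimate with $L^\gamma_t$ norms and the prefactor $T^{1/(\alpha+2) - 1/\gamma}$ on the right. For this I will use the one-dimensional calculus inequality $\|f\|_{L^\infty(I)} \lesssim T^{-1/\gamma}\|f\|_{L^\gamma(I)} + T^{1-1/\gamma}\|f'\|_{L^\gamma(I)}$ (valid for $\gamma > 1$) applied to $f(t) = \|w(t)\|_{L^\rho}$, which satisfies $|f'(t)| \le \|\partial_t w(t)\|_{L^\rho}$ by the triangle inequality since $\partial_t w \in L^\gamma(I, L^\rho)$. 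Together with the trivial $H^{4,\rho} \hookrightarrow L^\rho$ to absorb the lower-order piece, this converts the pointwise supremum into the required combination, and balancing all the H\"older and GN exponents produces exactly the factor $T^{1/(\alpha+2) - 1/\gamma}$.

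Estimate (\ref{111630}) follows the same blueprint with $L^{2N/(N-4)}_x$ replacing $L^2_x$, so that the LHS pair $(2, 2N/(N-4))$ is biharmonic admissible. The spatial H\"older split becomes $\tfrac{N-4}{2N} = \tfrac{\alpha}{2(\alpha+1)} + \tfrac{1}{b}$ for the corresponding $b$, and the Besov space $B^4_{\rho,2}$ enters through the sharper endpoint embedding $B^4_{\rho,2} \hookrightarrow L^{b}$ needed to close the argument where the usual $H^{4,\rho}$ Sobolev embedding fails at this endpoint; H\"older in $t$ with exponent $2$ then yields the claimed $L^2_t$ on the left. I expect the main obstacle to be the mixed space-time interpolation: tracking the powers $(\alpha+1)/(\alpha+2)$ on $H^{4,\rho}$ and $1/(\alpha+2)$ on $\partial_t w$ simultaneously with the exact $T$-exponent, and ensuring the GN exponent lands precisely on the critical Sobolev scaling. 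A secondary difficulty is the Besov endpoint in (\ref{111630}), which requires the sharper Besov embedding rather than the standard Sobolev one.
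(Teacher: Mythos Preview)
Your proposal has a real gap in each estimate.

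\textbf{For (\ref{111620}).} After your Gagliardo--Nirenberg step you have, pointwise in $t$,
\[
\|g(u^1)(t)-g(u^2)(t)\|_{L^2_x}\lesssim\bigl(\|u^1(t)\|_{H^4}^\alpha+\|u^2(t)\|_{H^4}^\alpha\bigr)\,\|w(t)\|_{H^{4,\rho}}^{\frac{\alpha+1}{\alpha+2}}\|w(t)\|_{L^\rho}^{\frac{1}{\alpha+2}}.
\]
To obtain the $L^\infty_t L^2_x$ norm you must take the supremum in $t$, and this forces the factor $\|w\|_{L^\infty_t H^{4,\rho}}^{(\alpha+1)/(\alpha+2)}$. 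But the hypotheses give only $w\in L^\gamma_t H^{4,\rho}_x$, not $L^\infty_t H^{4,\rho}_x$, and your one-dimensional calculus inequality on $f(t)=\|w(t)\|_{L^\rho}$ does nothing for the $H^{4,\rho}$ factor (applying it to $\|w(t)\|_{H^{4,\rho}}$ would require $\partial_t w\in L^\gamma_t H^{4,\rho}_x$, which is not assumed). So the argument does not close. The paper's device avoids this entirely: it applies the fundamental theorem of calculus to $|w|^{\alpha+1}w$ in $L^{2(\alpha+1)/(\alpha+2)}_x$ (cf.\ (\ref{1211})--(\ref{1212})), so that
\[
\|w(t)\|_{L^{2(\alpha+1)}}^{\alpha+2}=\bigl\||w|^{\alpha+1}w(t)\bigr\|_{L^{\frac{2(\alpha+1)}{\alpha+2}}}\lesssim\int_I\bigl\||w|^{\alpha+1}\partial_s w\bigr\|_{L^{\frac{2(\alpha+1)}{\alpha+2}}}\,ds,
\]
and the spatial H\"older split $\tfrac{\alpha+2}{2(\alpha+1)}=\tfrac{\alpha+1}{q_0}+\tfrac{1}{\rho}$ then puts the $w$'s in $L^\gamma_t L^{q_0}_x\hookleftarrow L^\gamma_t H^{4,\rho}_x$ (via (\ref{a1})) and $\partial_t w$ in $L^\gamma_t L^\rho_x$ directly. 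The point is that raising to the power $\alpha+2$ \emph{before} the time integration is what produces the $L^\gamma_t$ structure; your pointwise-GN route cannot manufacture it.

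\textbf{For (\ref{111630}).} Your H\"older split $\tfrac{N-4}{2N}=\tfrac{\alpha}{2(\alpha+1)}+\tfrac{1}{b}$ places all of $|u^i|^\alpha$ in $L^{2(\alpha+1)}$ and $w$ in $L^b$, but a direct computation gives $\tfrac{1}{b}=\tfrac{N-4\alpha-4}{2N(\alpha+1)}$, which is \emph{negative} for small $N$ (e.g.\ $N=9$, $\alpha=8$ gives $\tfrac{1}{b}=-\tfrac{27}{162}$), so the split is impossible there; and even when $b>0$ one checks that $\tfrac{1}{b}<\tfrac{1}{\rho}-\tfrac{4}{N}$, so neither $H^{4,\rho}\hookrightarrow L^b$ nor the Besov refinement holds. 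The paper's split is different and essential: it distributes the $\alpha$ copies of $u^i$ \emph{half in $L^{p_1}$ and half in $L^{p_2}$}, with $\tfrac{N-4}{2N}=\tfrac{\alpha}{2}\tfrac{1}{p_1}+\bigl(\tfrac{\alpha}{2}+1\bigr)\tfrac{1}{p_2}$ and $p_1,p_2$ chosen so that $H^4\hookrightarrow L^{p_1}$ and $H^{4,\rho}\hookrightarrow L^{p_2}$. The geometric observation that this line meets the admissible rectangle is exactly the content of the displayed inequalities preceding (\ref{12242}); a single-exponent split cannot achieve this.
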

\begin{proof}
	We first prove (\ref{111620}).
	It follows from  H\"older's inequality and Sobolev's embedding $H^4 \hookrightarrow L^{2\left(\alpha+1\right)}$ that
	\begin{eqnarray}
	&&\left\|\left|u^1\right|^{\alpha}u^1-\left|u^2\right|^{\alpha}u^2\right\|_{L^\infty \left(I,L^2\right)}\notag\\
	&\lesssim &\left(\left\|u^1\right\|^\alpha_{L^\infty \left(I,H^4\right)}+\left\|u^2\right\|_{L^\infty \left(I,H^4\right)}^\alpha\right)\left\|u^1-u^2\right\|_{L^\infty \left(I,L^{2\left(\alpha+1\right)}\right)}.
	\end{eqnarray}
	This inequality together with (\ref{1212}) and the embedding (\ref{a1}) yields (\ref{111620}).

	We now prove (\ref{111630}). Since $\rho=\frac{N\left(\alpha+2\right)}{N+4\alpha}$ and $\alpha\le\fc8{N-8}$, we have by direct calculation
	\begin{eqnarray*}
		&&\fc{N-4}{2N}-\fc\alpha2\left(\fc12-\fc4N\right)-\left(\fc\alpha2+1\right)\left(\fc1\rho-\fc4N\right)=\fc{8-(N-8)\alpha}{4N}\ge0,
	\end{eqnarray*}
	and
	\begin{eqnarray*}
		\frac{N-4}{2N}-\frac{\alpha}{2}\cdot \frac{1}{2}-\left(\frac{\alpha}{2}+1\right)\frac{1}{\rho}=-\frac{\alpha}{4}-\frac{2\left(\alpha+1\right)}{N}<0.
	\end{eqnarray*}
	So that the straight line of points $(x,y)$ satisfying $y=-\frac{\alpha}{\alpha+2}x+\frac{N-4}{N\left(\alpha+2\right)}$ passes through the square (including edges) with vertices $(\frac{1}{2}-\frac{4}{N},\frac{1}{\rho}-\frac{4}{N}),$ $(\frac{1}{2},\frac{1}{\rho}-\frac{4}{N})$ $ (\frac{1}{2}-\frac{4}{N},\frac{1}{\rho})$ and $(\frac{1}{2},\frac{1}{\rho})$.
	This implies that there exist $p_1, p_2$ satisfying $\frac12\ge\frac{1}{p_1}\ge\frac{1}{2}-\frac{4}{N},\frac{1}{\rho}\ge\frac{1}{p_2}\ge\frac{1}{\rho}-\frac{4}{N}$ and  $\frac{N-4}{2N}=\frac{\alpha}{2}\frac{1}{p_1}+\left(\frac\alpha2+1\right)\frac{1}{p_2}$.
	Hence we can deduce from H\"older's inequality and Sobolev's embedding $H^4 \hookrightarrow L^{p_1}, H^{4,\rho}\hookrightarrow L^{p_2}$ that

	\begin{eqnarray*}
		\left\|\left|u^1\right|^{\alpha}|u^1-u^2|\right\|_{L^{\fc{2N}{N-4}}}&\lesssim& \left\|u^1\right\|_{p_1}^\frac{\alpha}{2}\left\|u^1\right\|_{p_2}^{\frac{\alpha}{2}}\left\|u^1-u^2\right\|_{p_2}\notag\\
		&\lesssim &\left\|u^1\right\|_{H^4}^\frac{\alpha}{2}\left\|u^1\right\|_{B^4_{\rho,2}}^\frac{\alpha}{2}\left\|u^1-u^2\right\|_{B^4_{\rho,2}}
	\end{eqnarray*}
	This inequality together with H\"older's inequality in time gives
	\begin{eqnarray}\label{12242}
	&&\left\|\left|u^1\right|^{\alpha}\left(u^1-u^2\right)\right\|_{L^2\left(I,L^{\frac{2N}{N-2}}\right)}\notag\\
	&\lesssim& T^{\frac{1}{\alpha+2}-\frac{1}{\gamma }}\left\|u^1\right\|_{L^\infty \left(I,H^4\right)}^{\frac{\alpha}{2}}\left\|u^1\right\|_{L^\gamma \left(I,B^4_{\rho,2}\right)}^\frac{\alpha}{2}\left\|u^1-u^2\right\|_{L^\gamma \left(I,B^4_{\rho,2}\right)}.
	\end{eqnarray}
	Similarly,
	\begin{eqnarray}\label{12243}
		&&\left\|\left|u^2\right|^{\alpha}\left(u^1-u^2\right)\right\|_{L^2\left(I,L^{\frac{2N}{N-2}}\right)}\notag\\
	&\lesssim& T^{\frac{1}{\alpha+2}-\frac{1}{\gamma }}\left\|u^2\right\|_{L^\infty \left(I,H^4\right)}^{\frac{\alpha}{2}}\left\|u^2\right\|_{L^\gamma \left(I,B^4_{\rho,2}\right)}^\frac{\alpha}{2}\left\|u^1-u^2\right\|_{L^\gamma \left(I,B^4_{\rho,2}\right)}.
	\end{eqnarray}
	Combining (\ref{12242}) and (\ref{12243}), we obtain (\ref{111630}).
\end{proof}
\section{Proof of Theorem \ref{T1.00} in the case $N\ge9$}\label{s4}
In this section, we  use a contraction mapping argument based on the  nonlinear estimates established in the previous Section to prove Theorem \ref{T1.00}. The  case $1\leq N\leq8$ is  more simpler to deal with, since Sobolev's embedding theorem $H^4(\R^N)\hookrightarrow L^p(\R^N)$ holds for any $2\leq p<\infty$. We append its proof in the last section. In this section, we consider only  the case $N\geq9$.

We start with the following proposition.
\begin{proposition}\label{P4.1}
	Assume  $\lambda\in\mathbb{C},\mu=\pm1$ or $0, N\ge9,0<\alpha<\frac{8}{N-8},\phi\in H^4$ and $M\ge 2C_1\left(\left\|\phi\right\|_{H^4}^{2\alpha}+1\right)\left\|\phi\right\|_{H^4}^\alpha+1$, where $C_1$ is the constant in (\ref{1219}). Then for any $0<T\le c\left(M\right)$, where $c(M)$ is the constant in (\ref{T}),  the Cauchy problem (\ref{NLS})-(\ref{12241}) admits a unique solution $u\in C\left([0,T],H^4\right)\cap L^\gamma \left([0,T],H^{4,\rho}\right)$ with
		$
	\left\|u\right\|_{H^{1,\gamma  }\left([0,T],L^\rho\right)\cap L^\gamma  \left([0,T],H^{4,\rho}\right)}\le M.
$
	  Moreover, we have the further regularity:
	 $
	  	u,u_{t}, \Delta^2 u \in C\left([0, T], L^{2}\left(\mathbb{R}^{N}\right)\right) \cap L^{q}\left((0, T), L^{r}\left(\mathbb{R}^{N}\right)\right)
	  $ for every biharmonic admissible pair $(q,r)\in\Lambda_b$.

\end{proposition}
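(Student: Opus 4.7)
The plan is to apply a Banach fixed-point argument to the Duhamel map $S$ defined in (\ref{12113}). With $I=[0,T]$, I work in the closed ball
\[
E = \bigl\{u \in L^\gamma(I, H^{4,\rho}) \cap H^{1,\gamma}(I, L^\rho) : \|u\|_{L^\gamma(I,H^{4,\rho})}+\|u\|_{H^{1,\gamma}(I, L^\rho)} \le M \bigr\},
\]
equipped with the weak distance $d(u,v)= \|u-v\|_{L^\gamma(I,L^\rho)}$. The pair $(E,d)$ is complete: any $d$-Cauchy sequence in $E$ converges to a limit in $L^\gamma(I,L^\rho)$, and by weak compactness in the reflexive strong space the limit remains in $E$, in the spirit of Cazenave--Fang--Han \cite{Ca4}.

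I first verify that $S(E)\subset E$. Combining (\ref{1214}) with the elementary bound (\ref{12111}) on $F(\phi, T)$ gives
\[
\|Su\|_{H^{1,\gamma}(I, L^\rho)} \le C_1(\|\phi\|_{H^4}+\|\phi\|_{H^4}^{\alpha+1}) + CT^{1-\frac{\alpha+2}{\gamma}} M^{\alpha+1}.
\]
For the $L^\gamma(I, H^{4,\rho})$-norm of $Su$, I solve (\ref{1217}) for $\Delta^2(Su)=i\partial_t(Su)-\mu\Delta(Su)-\lambda|u|^\alpha u$; controlling $\partial_t(Su)$ by the preceding estimate, $\Delta(Su)$ by interpolation (with Young's inequality to absorb $\|\Delta^2(Su)\|$), and $|u|^\alpha u$ by Lemma \ref{l4}, yields an $L^\gamma(I, L^\rho)$ bound on $\Delta^2(Su)$. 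With $M$ as in the hypothesis and $T\le c(M)$ chosen small enough, both bounds are at most $M$, so $S(E)\subset E$.

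The subtlest step is the contraction estimate. For $u^1,u^2\in E$ sharing the initial datum $\phi$, Lemma \ref{l1} yields (\ref{12301}), whose right-hand side contains the non-differenced factor $\|(\partial_t u^2)^R\|_{L^\gamma(I,L^\rho)}$, which is only bounded by $M$. Following the strategy of Kato \cite{Kato} and Cazenave--Fang--Han \cite{Ca4}, I treat $R$ and $T$ as two independent free parameters: first pick $R=R(M)$ large enough that $\|(\partial_t u^2)^R\|_{L^\gamma(I,L^\rho)}$ is uniformly small on $E$ (exploiting auxiliary higher integrability of $\partial_t u^2$ obtained via Strichartz on (\ref{NLS2})), and then shrink $T=T(R,M)$ so that the remaining factors $T^{1-\frac{\alpha+2}\gamma+\frac{\widetilde\rho_i-\rho}{\gamma\widetilde\rho_i}}R^{\frac{\widetilde\rho_i-\rho}{\widetilde\rho_i}}$ and $T^{1-\frac{\alpha+2}\gamma}M^\alpha$ are smaller than $\tfrac12$. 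This forces $d(Su^1,Su^2)\le \tfrac12\, d(u^1,u^2)$ and hence a unique fixed point $u\in E$, which solves (\ref{NLS})--(\ref{12241}) in view of (\ref{1217}).

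Finally, for the additional regularity I apply the Strichartz estimates (\ref{sz})--(\ref{SZ}) to the Duhamel identity (\ref{12113}) for $u$ and to its time derivative (\ref{NLS2}), controlling the nonlinear pieces with Lemmas \ref{l3} and \ref{l4}. This places $u$ and $\partial_t u$ in $C(I, L^2)\cap L^q(I, L^r)$ for every $(q,r)\in\Lambda_b$; combining with the identity $\Delta^2 u = -i\partial_t u - \mu\Delta u - \lambda |u|^\alpha u$ and Lemma \ref{l3} (which places $|u|^\alpha u$ in $C(I, L^2)$) lifts this regularity to $\Delta^2 u$, whence $u\in C(I, H^4)$. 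I anticipate the balancing of $R$ and $T$ in the contraction step to be the main technical obstacle, since it is what substitutes for the lack of Lipschitz regularity of $|u|^\alpha u$ at the $H^4$ scale when $\alpha$ is not an integer.
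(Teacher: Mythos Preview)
Your setup of the complete metric space and the verification that $S(E)\subset E$ are correct and follow the paper. The gap is in the contraction step.

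You invoke the full estimate (\ref{12301}), which controls $\|Su^1-Su^2\|_{H^{1,q}(I,L^r)}$, and then propose to tame the non-Lipschitz term $\|(\partial_t u^2)^R\|_{L^\gamma(I,L^\rho)}$ by choosing $R=R(M)$ large. This cannot be made uniform over $u^2\in E$: for any fixed $R$ there are elements of $E$ with $\partial_t u^2$ concentrated on the set $\{|\partial_t u^2|\ge R\}$, so $\|(\partial_t u^2)^R\|_{L^\gamma(I,L^\rho)}$ can be as large as $M$. The dominated-convergence argument you have in mind works only for a \emph{fixed} function; this is precisely how the paper uses (\ref{12301}) later, in Claim~\ref{c2}, where $u^2=u$ is the already-constructed solution. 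Nor is there any ``auxiliary higher integrability of $\partial_t u^2$ via Strichartz on (\ref{NLS2})'' available: formula (\ref{NLS2}) describes $\partial_t(Su^2)$, not $\partial_t u^2$, and an arbitrary $u^2\in E$ solves no equation.

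The paper's contraction is far simpler and avoids (\ref{12301}) entirely. Since the distance is only $d(u,v)=\|u-v\|_{L^\gamma(I,L^\rho)}$, one uses the pointwise bound $\bigl||u|^\alpha u-|v|^\alpha v\bigr|\lesssim(|u|^\alpha+|v|^\alpha)|u-v|$ together with $H^{4,\rho}\hookrightarrow L^{\rho\alpha/(\rho-2)}$ to obtain (see (\ref{1216}) and (\ref{12110}))
\[
d(Su,Sv)\le C_2\,T^{1-\frac{\alpha+2}{\gamma}}M^{\alpha}\,d(u,v)\le \tfrac12\,d(u,v)
\]
for $T\le c(M)$. The whole point of equipping $E$ with the \emph{weak} metric is precisely to make this step elementary and to sidestep the lack of Lipschitz regularity of $|u|^\alpha u$ at the $H^4$ scale; no $R$--$T$ balancing is needed for existence. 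Your final regularity paragraph is essentially the paper's argument.
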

\begin{proof}To begin with, we fix $c\left(M\right)>0$ such that
	\begin{eqnarray}\label{T}
	\left(C_1+C_2\right)c\left(M\right)^{\alpha \left(\frac{1}{\alpha+2}-\frac{1}{\gamma}\right)}M^\alpha=\frac12,
	\end{eqnarray}
	where $ C_2$ are the constant (\ref{12110}).

	Set $I=[0,T]$ and consider the metric space
	\begin{eqnarray}\label{XTM}
	X_{T,M}=\left\{u\in H^{1,\gamma } \left(I,L^\rho\right)\cap L^\gamma \left(I,H^{4,\rho}\right): \left\|u\right\|_{H^{1,\gamma } \left(I,L^\rho\right)\cap L^\gamma \left(I,H^{4,\rho}\right)}\le M\right\}
	\end{eqnarray}
	It follows that $X_{T,M}$ is a complete metric space when equipped with the distance
	\begin{eqnarray}\label{1218}
	d(u,v)=\left\|u-v\right\|_{L^\gamma \left(I,L^\rho\right)}.
	\end{eqnarray}
	To obtain a solution $u\in X_{T,M}$ to the equation (\ref{NLS}), it suffices to show that the map  $S$, defined in (\ref{12113}), is a contraction on the space $X_{T,M}$.
	
	We first show that $S$ maps $X_{T,M}$ into itself. By  (\ref{12111}) and (\ref{1214}), we have for any $u\in X_{T,M}$
	\begin{eqnarray}\label{7.15}
	\|Su\|_{H^{1,\gamma}(I,L^\rho)}\lesssim \left\|\phi\right\|_{H^4}+\left\|\phi\right\|_{H^4}^{\alpha+1}+T^{1-\frac{\alpha+2}{\gamma }}\left\|u\right\|_{L^\gamma \left(I,H^{4,\rho}\right)}^\alpha \left\|u\right\|_{H^{1,\gamma } \left(I,L^\rho\right)}.
	\end{eqnarray}
	Our next step is to estimate  $\|Su\|_{L^\gamma(I,H^{4,\rho})}$. It follows from the equation (\ref{1217}) that, for every $u\in X_{T,M}$,
	\begin{equation}\label{7.16}
	\|\Delta^2(Su)\|_{L^\gamma(I,L^\rho)}\leq\|\Delta (Su)\|_{L^\gamma(I,L^\rho)}+\|\partial_t (Su)\|_{L^\gamma(I,L^\rho)}+\||u|^\alpha u\|_{L^\gamma(I,L^\rho)}.
	\end{equation}
	Moreover, it follows from H\"older's inequality and Gagliardo-Nirenberg's inequality that
	\begin{eqnarray}\label{111213}
	\|\Delta\left(Su\right)\|_{L^\gamma\left(I,L^\rho\right)}&\le& C\left\|\|Su\|_{L^\rho}^\fc12\|\Delta^2(Su)\|_{L^\rho}^\fc12\right\|_{L^\gamma(I)}\notag\\
	&\le& C\|Su\|_{L^\gamma\left(I,L^\rho\right)}^\fc12\|\Delta^2\left(Su\right)\|_{L^\gamma\left(I,L^\rho\right)}^\fc12\notag\\
	&\le&\fc12\|\Delta^2\left(Su\right)\|_{L^\gamma\left(I,L^\rho\right)}+\fc12C^2\|Su\|_{L^\gamma\left(I,L^\rho\right)},
	\end{eqnarray}
	where we  used Cauchy-Schwartz's inequality in the last step. Estimates (\ref{7.16}) and (\ref{111213}) imply that
	\begin{eqnarray}\label{ab}
		\|\Delta^2(Su)\|_{L^\gamma(I,L^\rho)}\lesssim \|\partial_t (Su)\|_{L^\gamma(I,L^\rho)}+\|Su\|_{L^\gamma(I,L^\rho)}+\||u|^\alpha u\|_{L^\gamma(I,L^\rho)}.
	\end{eqnarray}
	On the other hand, applying  Lemma \ref{l4}, (\ref{12111}) and  (\ref{a1}), we conclude that
	\begin{eqnarray}\label{11211}
	\left\|\left|u\right|^{\alpha}u\right\|_{L^\gamma \left(I,L^\rho\right)}&\lesssim& T^{\alpha \left(\frac{1}{\alpha+2}-\frac{1}{\gamma}\right)}\left(\left\|u\right\|_{L^\gamma \left(I,H^{4,\rho}\right)}^{\alpha+1}+\left\|\partial_tu\right\|_{L^\gamma \left(I,L^\rho\right)}^{\alpha+1}\right)\notag\\
	&&+\left\|\phi\right\|_{H^4}^{\alpha+1}+\left\|\phi\right\|_{H^4}^{2\alpha+1}.
	\end{eqnarray}
		It now follows from (\ref{7.15}), (\ref{ab}) and (\ref{11211}) that, for $u\in X_{T,M}$
	\begin{eqnarray}\label{1219}
	&&\|Su\|_{H^{1,\gamma}(I,L^\rho)\cap L^\gamma(I,H^{4,\rho})}\notag\\
	&\le& C_1\left(\left\|\phi\right\|_{H^4}^{2\alpha}+1\right)\left\|\phi\right\|_{H^4}+C_1T^{\alpha \left(\frac{1}{\alpha+2}-\frac{1}{\gamma}\right)}M^{\alpha+1}.
	\end{eqnarray}
	Since $0<T\le c\left(M\right)$, we can apply (\ref{T}) and (\ref{1219}) to obtain
	\begin{eqnarray*}
		\|Su\|_{H^{1,\gamma}(I,L^\rho)\cap L^\gamma(I,H^{4,\rho})}\le M.
	\end{eqnarray*}
	
	Our next aim is the desired Lipschitz property of $S$ with respect to the metric $d$ defined in (\ref{1218}).  Given  $u,v\in X_{T,M}$, we can apply the same method as that used to derive (\ref{1216}) to obtain
	\begin{eqnarray}\label{12110}
	d(Su,Sv)
	&\lesssim& T^{1-\frac{\alpha+2}\gamma}\left(\|u\|_{L^\gamma(I,H^{4,\rho})}^\alpha+\|v\|_{L^\gamma(I,H^{4,\rho})}^\alpha\right)\|u-v\|_{L^{\gamma}(I,L^{\rho})}\notag\\
	&\le& C_2T^{1-\frac{\alpha+2}\gamma}M^\alpha d(u,v)\le\frac12 d\left(u,v\right).
	\end{eqnarray}
	
	So we prove that $S$ is a contraction on the space $X_{T,M}$ for $0<T\le c\left(M\right)$. Using Banach's fixed-point argument, we deduce that there exists a unique solution $u\in L^\infty \left([0,T],H^4\right)\cap L^\gamma \left([0,T],H^{4,\rho}\right)$ to the Cauchy problem (\ref{NLS})-(\ref{12241}).

	We now prove some further regularity properties.
	
	Firstly, we prove that  $u \in H^{1,q}\left((0, T), L^{r}\left(\mathbb{R}^{N}\right)\right)$ for every biharmonic admissible pair $(q, r)\in\Lambda_b$ and $u, \partial_tu \in C\left([0, T], L^{2}\left(\mathbb{R}^{N}\right)\right)$. Note
	that we have (see (\ref{12113}) and (\ref{NLS2}))
	\begin{eqnarray}\label{12213}
	u(t)=e^{it(\Delta^2+\mu\Delta)}\phi+i\lambda \int_{0}^{t}e^{i\left(t-s\right)(\Delta^2+\mu\Delta)}[\left|u\right|^{\alpha}u]\left(s\right)\mathrm{d}s,
	\end{eqnarray}
	and
	$$
	\partial_tu=i e^{i t (\Delta^2+\mu\Delta)}\left[\left(\Delta^2+\mu\Delta\right) \phi-\lambda|\phi|^{\alpha} \phi\right]-i \lambda \int_{0}^{t} e^{i(t-s) \left(\Delta^2+\mu\Delta\right)} \partial_{s}\left[|u|^{\alpha} u\right](s) d s.
	$$
	Since $\phi,\left(\Delta^2+\mu\Delta\right) \phi-\lambda|\phi|^{\alpha} \phi \in L^{2}\left(\mathbb{R}^{N}\right)$ and $\left[|u|^{\alpha} u\right] \in H^{1,\gamma^{\prime}}\left((0, T), L^{\rho^{\prime}}\left(\mathbb{R}^{N}\right)\right)$  (see the proof of (\ref{1215})), we deduce  from Strichartz's estimates (\ref{SZ}) that $u \in H^{1,q}\left((0, T), L^{r}\left(\mathbb{R}^{N}\right)\right)$ for every biharmonic admissible pair $(q, r)\in\Lambda_b$ and $u, \partial_tu \in C\left([0, T], L^{2}\left(\mathbb{R}^{N}\right)\right)$.
	
	Next we prove  that $\Delta^2 u \in C\left([0, T], L^{2}\left(\mathbb{R}^{N}\right)\right)$. For any $t_1,t_2\in[0,T]$, we have by using the equation (\ref{NLS})
	\begin{eqnarray}\label{11182}
	\left\|\Delta^2u(t_1)-\Delta^2u(t_2)\right\|_2&\lesssim&\|\partial_tu(t_1)-\partial_tu(t_2)\|_2+\left\|\Delta u(t_1)-\Delta u(t_2)\right\|_2\notag\\
	&&+\left\||u|^\alpha u(t_1)-|u|^\alpha u(t_2)\right\|_2
	\end{eqnarray}
	Using the same method as that used to derive (\ref{ab}), we deduce that
	\begin{eqnarray}\label{1222}
	\left\|\Delta^2u(t_1)-\Delta^2u(t_2)\right\|_2&\lesssim&\|\partial_tu(t_1)-\partial_tu(t_2)\|_2+\left\| u(t_1)- u(t_2)\right\|_2\notag\\
	&&+\left\||u|^\alpha u(t_1)-|u|^\alpha u(t_2)\right\|_2.
	\end{eqnarray}
	Since   $u,\partial_tu$ and $|u|^{\alpha} u \in C\left([0, T], L^{2}\left(\mathbb{R}^{N}\right)\right)$ (see Lemma \ref{l3} for the continuity of $|u|^\alpha u$), we deduce from (\ref{1222})   that $\Delta^2 u \in C\left([0, T], L^{2}\left(\mathbb{R}^{N}\right)\right)$. In particular, we have $u\in L^\infty \left([0,T],H^4\left(\R^N\right)\right)$.
	
	Our final step is to show that $\Delta^2 u \in L^{q}\left((0, T), L^{r}\left(\mathbb{R}^{N}\right)\right)$ for every admissible pair $(q,r)\in\Lambda_b$. Similarly to (\ref{ab}), we have
	\begin{eqnarray}\label{12203}
	\left\|\Delta^2 u\right\|_{L^q \left(\left(0,T\right),L^r\right)}\lesssim \left\| u\right\|_{H^{1,q} \left(\left(0,T\right),L^r\right)}+\left\|\left|u\right|^{\alpha}u\right\|_{L^q \left(\left(0,T\right),L^r\right)}
	\end{eqnarray} On the other hand, since $ u \in L^{\gamma}\left((0, T), H^{4,\rho}\left(\mathbb{R}^{N}\right)\right) \cap L^{\infty}\left((0, T), H^{4}\left(\mathbb{R}^{N}\right)\right)\cap H^{1,\gamma }\left(\left(0,T\right),L^\rho \left(\R^N\right)\right),$ it follows from Lemma \ref{l2} that $|u|^{\alpha} u \in L^{2}((0, T), L^{\frac{2 N}{N-2}}$ $(\mathbb{R}^{N}))$ $\cap L^{\infty}\left((0, T), L^{2}\left(\mathbb{R}^{N}\right)\right)$. This together with  H\"older's inequality implies that  $|u|^{\alpha} u \in$ $L^{q}\left((0, T), L^{r}\left(\mathbb{R}^{N}\right)\right)$ for every admissible pair $(q,r)\in\Lambda_b$. Therefore, from   (\ref{12203}), we see that $\Delta^2 u \in L^{q}\left((0, T), L^{r}\left(\mathbb{R}^{N}\right)\right)$ for every admissible pair $(q,r)\in\Lambda_b$.
	
	The proof of Proposition \ref{P4.1} is now completed.
\end{proof}
\begin{proof}[\textbf{Proof of Theorem \ref{T1.00}in the case $N\ge9$}]
	We now resume the proof of Theorem \ref{T1.00}.  We consider only the positive time direction. A corresponding conclusion for  reverse direction follows similarly. Firstly, we deduce from Proposition \ref{P4.1} that there exist $T>0$ and a unique solution $u\in C\left([0,T],H^4\right)$ to the Cauchy problem (\ref{NLS})-(\ref{12241}) with  $u,u_{t}, \Delta^2 u \in$ $C\left([0, T], L^{2}\left(\mathbb{R}^{N}\right)\right)\cap L^q\left([0,T],L^r\left(\R^N\right)\right)$ for every biharmonic admissible pair $(q,r)\in\Lambda_b$. Using the uniqueness in  Appendix and the standard procedure as in Chapter 4 of \cite{Ca9}, we can extend $u$ to a maximal solution $u\in C\left([0,T_{\text{max}}),H^4\right)$ to the Cauchy problem (\ref{NLS})-(\ref{12241}) with  $u,u_{t}, \Delta^2 u \in$ $C\left([0, T_{\text{max}}), L^{2}\left(\mathbb{R}^{N}\right)\right) \cap L^{q}_{\text{loc}}\left((0, T_{\text{max}}), L^{r}\left(\mathbb{R}^{N}\right)\right)$ for every biharmonic admissible pair $(q,r)\in\Lambda_b$. Moreover, since the solution $u$ of  Proposition \ref{P4.1} is constructed on an interval depending on $\left\|\phi\right\|_{H^4}$, we deduce the blowup alternative (\ref{bl1}). 
	
	In the rest of this section, we prove the continuous dependence of the solution  map.

	For any $0<A<T_{\text{max}}(\phi)$, we set
	\begin{eqnarray}\label{M}
	M&=&4C_1\left(\left\|u\right\|_{L^\infty\left([0,A],H^4\right)}^{2\alpha}+1\right)\left\|u\right\|_{L^\infty\left([0,A],H^4\right)}^\alpha\notag\\
	&&+\left\|u\right\|_{L^\gamma \left([0,A],H^{4,\rho}\right)\cap H^{1,\gamma }\left([0,A],L^\rho\right)}+2
	\end{eqnarray}
	where  $C_1$ is the constant in (\ref{1219}). Moreover, we fix $T>0$ sufficiently small such that
	\begin{equation}\label{9171}
	T\le c(M), \left(C_3+C_4\right)T^{1-\frac{\alpha+2}{\gamma }}M^\alpha+C_5 T^{\frac{\alpha}{\alpha+1}\left(\frac{1}{\alpha+2}-\frac{1}{\gamma }\right)}\left(M^\alpha+M^{\frac{\alpha \left(2\alpha+1\right)}{\alpha+1}}\right)\le \frac{1}{2}
	\end{equation}
	where $c(M)$ is the constant in Proposition \ref{P4.1} and $C_3,C_4, C_5$ are the constants  in (\ref{12112}),  (\ref{12115}) and (\ref{121614}), respectively.
	
	Since  $\phi_n\rightarrow\phi$ in $H^4$, we know that there exists a positive $n_1$ such that $M\ge2C_1\left(\|\phi_n\|_{H^4}^{2\alpha}+1\right)\|\phi_n\|_{H^4}^\alpha+1$ for every $n\ge n_1$. Then we can deduce from Proposition \ref{P4.1} that for every $n\ge n_1$, the following equation
	\begin{equation}\label{1221}
	u_n=e^{it(\Delta^2+\mu\Delta)}\phi_n+i\lambda \int_{0}^{t}e^{i\left(t-s\right)(\Delta^2+\mu\Delta)}[\left|u_n\right|^{\alpha}u_n]\left(s\right)\mathrm{d}s
	\end{equation}
	admits a unique solution $u_n\in C(I,H^4)\cap  L^\gamma(I,B^{4}_{\rho,2})$ with $I=[0,T]$ and $u_n$ is uniformly bounded:
	\begin{equation}\label{M1}
	\|u_n\|_{L^\gamma  \left(I,H^{4,\rho}\right)\cap H^{1,\gamma }\left(I,L^\rho\right)} \leq M,\ \text{for } \forall\  n\ge n_1.
	\end{equation}
	The proof of the continuous dependence will proceed by a series of  Claims.
	
	\begin{claim}\label{c2}
	Given any $(q,r)\in\Lambda_b$, we have $\left\|u_n-u\right\|_{H^{1,q} \left(I,L^r\right)}\rightarrow0$ as $n\rightarrow\infty$.
	\end{claim}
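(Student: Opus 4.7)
The natural strategy is to apply Lemma \ref{l1} with $u^1 = u_n$ and $u^2 = u$, using $Su_n = u_n$ and $Su = u$, together with the smallness built into (\ref{9171}) to absorb the principal difference term. Since the left-hand side of (\ref{12301}) is a supremum over all biharmonic admissible pairs, if I can show that this supremum tends to zero, the claim will follow for each individual $(q,r)\in\Lambda_b$.

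As a preliminary step, I would first establish the weaker convergence $\|u_n - u\|_{L^\gamma(I, L^\rho)} \to 0$. This is the easier half: applying (\ref{1216}) to $u_n$ and $u$, using $\phi_n \to \phi$ in $H^4 \hookrightarrow L^2$, the uniform bound (\ref{M1}), and the smallness in (\ref{9171}) (which ensures the relevant power of $T$ times $M^\alpha$ is below $1/2$), one absorbs the $\|u_n - u\|_{L^\gamma(I, L^\rho)}$ factor into the left-hand side and obtains $\sup_{(q,r)\in\Lambda_b} \|u_n - u\|_{L^q(I, L^r)} \lesssim \|\phi_n - \phi\|_{L^2} \to 0$.

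With this in hand, I would apply (\ref{12301}) in full. By (\ref{M1}) and (\ref{9171}), the term on the right-hand side involving $\|u_n - u\|_{H^{1,\gamma}(I, L^\rho)}$ is bounded by $\tfrac{1}{2}\sup_{(q,r)\in\Lambda_b} \|u_n - u\|_{H^{1,q}(I, L^r)}$ and is absorbed into the left. The remaining terms split into three families: the initial-data contribution $(1 + \|\phi_n\|_{H^4}^\alpha + \|\phi\|_{H^4}^\alpha)\|\phi_n - \phi\|_{H^4}$, which vanishes by hypothesis; the truncation term $T^{1-(\alpha+2)/\gamma} \|(\partial_t u)^R\|_{L^\gamma(I, L^\rho)}(\|u_n\|_{L^\gamma(I,H^{4,\rho})}^\alpha + \|u\|_{L^\gamma(I,H^{4,\rho})}^\alpha)$, which can be made small uniformly in $n$ by taking $R$ large, because $\partial_t u \in L^\gamma(I, L^\rho)$ so $(\partial_t u)^R \to 0$ pointwise a.e.\ and dominated convergence forces its $L^\gamma(I, L^\rho)$ norm to zero as $R\to\infty$; and the two $R$-growing tails with factors $R^{(\widetilde\rho_i - \rho)/\widetilde\rho_i}$ paired respectively with $\|u_n - u\|_{L^\gamma(I, L^\rho)}^\alpha$ (when $0<\alpha\leq 1$) and $\|u_n - u\|_{L^\gamma(I, L^\rho)}$ (when $\alpha>1$).

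The main obstacle, and the point that requires care, is the tension between the second and third families: the truncation term pushes $R$ large, while the tail terms carry unbounded powers of $R$. I would resolve this by a two-scale argument. Given $\varepsilon > 0$, first fix $R = R(\varepsilon)$ large enough that the truncation term is below $\varepsilon$, uniformly in $n$; then, with $R$ frozen, invoke the preliminary step to conclude that for all sufficiently large $n$, both the tail terms (whose $R$-factors are now harmless constants) and the initial-data term are each below $\varepsilon$. This yields $\sup_{(q,r)\in\Lambda_b}\|u_n - u\|_{H^{1,q}(I, L^r)} \leq C\varepsilon$, and since $\varepsilon$ is arbitrary, the desired convergence follows. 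This decoupling of $R$ from $n$ is exactly what is forced by the limited smoothness of the nonlinearity $|u|^\alpha u$ when $0<\alpha\leq 1$; the decomposition $\partial_t u = (\partial_t u)^R + (\partial_t u)_R$ embedded in Lemma \ref{l1} is precisely the device that compensates for the lack of a global Lipschitz estimate on $\partial_t\bigl(|u|^\alpha u\bigr)$.
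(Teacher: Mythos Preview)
Your proposal is correct and follows essentially the same approach as the paper: first establish the preliminary convergence $\|u_n-u\|_{L^\gamma(I,L^\rho)}\to 0$ via the argument of (\ref{1216}), then apply Lemma~\ref{l1} with the uniform bounds (\ref{M}), (\ref{M1}) and the smallness (\ref{9171}) to absorb the $H^{1,\gamma}(I,L^\rho)$ difference term, and finally handle the $(\partial_t u)^R$ truncation by dominated convergence while the $R$-growing tails are killed by the preliminary step. The only cosmetic difference is that the paper first passes to the $\limsup$ in $n$ (so the tail terms and the data term vanish for each fixed $R$), then sends $R\to\infty$, and only afterward absorbs via (\ref{12115}); you instead absorb first and then run an explicit $\varepsilon$--$R$--$n$ argument, which is logically equivalent.
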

	\begin{proof}
		We first show that
		\begin{eqnarray}\label{121611}
		\lim_{n\rightarrow \infty }\left\|u_n-u\right\|_{L^q \left(I,L^r\right)\cap L^\gamma \left(I,L^\rho\right)}=0.
		\end{eqnarray}
		Applying  the same method as that used to derive (\ref{1216}), we conclude that
		\begin{eqnarray}\label{12112}
		&&\left\|u_n-u\right\|_{L^q \left(I,L^r\right)\cap L^\gamma \left(I,L^\rho\right)}\notag\\
		&\lesssim&\left\|\phi_n-\phi\right\|_{L^2}+T^{1-\frac{\alpha+2}\gamma}\left(\|u_n\|_{L^\gamma(I,H^{4,\rho})}^\alpha+\|u\|_{L^\gamma(I,H^{4,\rho})}^\alpha\right)\|u_n-u\|_{L^{\gamma}(I,L^{\rho})}\notag\\
		&\le&C_3\left\|\phi_n-\phi\right\|_{L^2}+C_3T^{1-\fc{\alpha+2}\gamma}M^\alpha\left\|u_n-u\right\|_{L^q \left(I,L^r\right)},
		\end{eqnarray}
		where we used the boundedness of $u_n,u$ in   (\ref{M}) and  (\ref{M1}). This inequality together with the smallness of $T$ in (\ref{9171}) yields  (\ref{121611}).
		
		Next, it follows from  Lemma \ref{l1}, (\ref{121611}) and the boundedness of $u_n,u$ again, we have, for any $R>0$
		\begin{eqnarray*}
			&&\limsup_{n\rightarrow\infty}\|u_n-u\|_{H^{1,q}\left(I,L^r\right)\cap H^{1,\gamma }\left(I,L^\rho\right)}\notag\\
			&\lesssim&T^{1-\fc{\alpha+2}\gamma}M^\alpha \limsup_{n\rightarrow\infty}\|u_n-u\|_{H^{1,\gamma } \left(I,L^\rho\right)}+T^{1-\fc{\alpha+2}\gamma}\|\left(\partial_tu\right)^R\|_{L^\gamma \left(I,L^\rho\right)}M^\alpha.
		\end{eqnarray*}
		Note that $\lim_{R\rightarrow \infty }\|\left(\partial_tu\right)^R\|_{L^\gamma \left(I,L^\rho\right)}=0$ by the dominated convergence theorem, we can let $R\rightarrow\infty$ to obtain
		\begin{eqnarray}\label{12115}
		&&\limsup_{n\rightarrow\infty}\|u_n-u\|_{H^{1,q}\left(I,L^r\right)\cap H^{1,\gamma }\left(I,L^\rho\right)}\notag\\
		&\le& C_4T^{1-\fc{\alpha+2}\gamma}M^\alpha \limsup_{n\rightarrow\infty}\|\partial_tu_n-\partial_tu\|_{L^\gamma \left(I,L^\rho\right)}.
		\end{eqnarray}
		Since  $C_4T^{1-\fc{\alpha+2}\gamma}M^\alpha\le\fc12$ by (\ref{9171}), it follows from (\ref{12115}) that  $\limsup_{n\rightarrow\infty}\|u_n-u\|_{H^{1,q}\left(I,L^r\right)}=0$.
		
		This finishes the proof of Claim \ref{c2}.
	\end{proof}
	\begin{claim}\label{cc1}
		$	\left\|u_n-u\right\|_{L^\gamma   \left(I,H^{4,\rho}\right)}\rightarrow0$ as $n\rightarrow\infty$.
	\end{claim}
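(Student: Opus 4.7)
The plan is to upgrade the $H^{1,\gamma}(I, L^\rho)$ convergence furnished by Claim \ref{c2} to convergence in $L^\gamma(I, H^{4,\rho})$ by exploiting the equation. Applying the argument behind (\ref{ab}) to the difference $u_n - u$, I first obtain
\begin{eqnarray*}
\|\Delta^2(u_n - u)\|_{L^\gamma(I,L^\rho)} &\lesssim& \|\partial_t(u_n - u)\|_{L^\gamma(I,L^\rho)} + \|u_n - u\|_{L^\gamma(I,L^\rho)} \\
&& + \||u_n|^\alpha u_n - |u|^\alpha u\|_{L^\gamma(I,L^\rho)}.
\end{eqnarray*}
By Claim \ref{c2} applied with the admissible pair $(\gamma, \rho) \in \Lambda_b$, the first two right-hand terms already tend to zero as $n \to \infty$, so the whole task reduces to bounding the nonlinear difference by a quantity that either vanishes with $n$ or can be absorbed on the left using the smallness built into (\ref{9171}).

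For the nonlinear term I would adapt the proof of Lemma \ref{l4} to differences. Starting from the pointwise inequality $||u_n|^\alpha u_n - |u|^\alpha u| \lesssim (|u_n|^\alpha + |u|^\alpha)|u_n - u|$, H\"older's inequality with the exponent identity $\frac{1}{\rho} = \frac{\alpha}{2(\alpha+1)} + \frac{1}{p_0}$, Sobolev's embedding $H^4 \hookrightarrow L^{2(\alpha+1)}$, the embedding (\ref{a1}), and the interpolation trick behind (\ref{1212}) that trades $\|\cdot\|^\alpha_{L^\infty(I, L^{2(\alpha+1)})}$ against $\|\cdot\|_{L^\gamma(I, L^{q_0})}$ and $\|\partial_t(\cdot)\|_{L^\gamma(I, L^\rho)}$, I would aim at an estimate of the shape
$$\||u_n|^\alpha u_n - |u|^\alpha u\|_{L^\gamma(I,L^\rho)} \leq C_5 T^{\frac{\alpha}{\alpha+1}\left(\frac{1}{\alpha+2}-\frac{1}{\gamma}\right)} \bigl(M^\alpha + M^{\frac{\alpha(2\alpha+1)}{\alpha+1}}\bigr) \|u_n - u\|_{L^\gamma(I, H^{4,\rho})} + o_n(1),$$
matching the structure of (\ref{9171}); the $o_n(1)$ collects contributions produced by the decomposition $u_n - u = w_n + e^{it(\Delta^2 + \mu \Delta)}(\phi_n - \phi)$, where $w_n(0) = 0$ permits the time-integration argument of Lemma \ref{l4} and the free-evolution piece is controlled via the Strichartz estimate by $\|\phi_n - \phi\|_{H^4} \to 0$. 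Combining this with the first display and using (\ref{9171}) to absorb the $\|u_n - u\|_{L^\gamma(I, H^{4,\rho})}$ term on the right yields $\|u_n - u\|_{L^\gamma(I, H^{4,\rho})} \to 0$ on the sub-interval $I = [0, T]$; iterating the argument on consecutive sub-intervals of $[-B, A]$ of length at most $T$, each restarted from the already-established $H^4$-convergence at the endpoint, extends the conclusion to the full interval.

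The hard part is producing the small power of $T$ in front of the nonlinear difference. A naive application of H\"older with $\|u_n\|_{L^\infty(I, L^{2(\alpha+1)})} \lesssim M$ followed by the embedding (\ref{a1}) to control $\|u_n - u\|_{L^\gamma(I, L^{p_0})}$ by $\|u_n - u\|_{L^\gamma(I, H^{4,\rho})}$ would leave a $T$-free factor of the shape $M^\alpha \|u_n - u\|_{L^\gamma(I, H^{4,\rho})}$ on the right, preventing absorption. Circumventing this forces the splitting into $w_n$ (with $w_n(0) = 0$) plus the free evolution of $\phi_n - \phi$, together with the use of the fundamental theorem of calculus as in (\ref{12175}) to convert the $L^\infty$-in-time estimate of $\|w_n\|_{L^{2(\alpha+1)}}$ into an $L^\gamma$-in-time estimate at the cost of a time weight; this is precisely what produces the exponents $\frac{\alpha}{\alpha+1}\left(\frac{1}{\alpha+2} - \frac{1}{\gamma}\right)$ and the additional power $M^{\alpha(2\alpha+1)/(\alpha+1)}$ appearing in (\ref{9171}).
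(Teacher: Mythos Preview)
Your overall plan is right and matches the paper: reduce via (\ref{ab}) to the nonlinear difference in $L^\gamma(I,L^\rho)$, produce a small-$T$ factor in front of $\|u_n-u\|_{L^\gamma(I,H^{4,\rho})}$, and absorb using (\ref{9171}). The gap is in how you manufacture that small $T$. Starting from the pointwise bound $||u_n|^\alpha u_n-|u|^\alpha u|\lesssim(|u_n|^\alpha+|u|^\alpha)|u_n-u|$ and the H\"older split $\tfrac{1}{\rho}=\tfrac{\alpha}{2(\alpha+1)}+\tfrac{1}{p_0}$ places the $\alpha$-th power on $u_n$ (or $u$), so the factor to which the (\ref{1212})-type time gain would apply is $\|u_n\|_{L^\infty(I,L^{2(\alpha+1)})}^\alpha$. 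But $u_n(0)=\phi_n\neq0$, and after subtracting the free flow of $\phi_n$ you are still left with a term of size $\|\phi_n\|_{H^4}^\alpha\,\|u_n-u\|_{L^\gamma(I,L^{p_0})}\lesssim M^\alpha\|u_n-u\|_{L^\gamma(I,H^{4,\rho})}$ with no power of $T$. Splitting $u_n-u=w_n+e^{it(\Delta^2+\mu\Delta)}(\phi_n-\phi)$ does not help here, because in your H\"older scheme $u_n-u$ sits in the $L^{p_0}$ slot, not in the $L^{2(\alpha+1)}$ slot where the FTC trick (\ref{12175})--(\ref{1212}) lives.

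The paper bypasses this by a different H\"older step: it writes
\[
\||u_n|^\alpha u_n-|u|^\alpha u\|_{L^\gamma(I,L^\rho)}\lesssim\Bigl(\||u_n|^\alpha u_n\|_{L^\gamma(I,L^\rho)}^{\frac{\alpha}{\alpha+1}}+\||u|^\alpha u\|_{L^\gamma(I,L^\rho)}^{\frac{\alpha}{\alpha+1}}\Bigr)\||u_n-u|^\alpha(u_n-u)\|_{L^\gamma(I,L^\rho)}^{\frac{1}{\alpha+1}},
\]
using H\"older at the symmetric exponent $L^{(\alpha+1)\gamma}(I,L^{(\alpha+1)\rho})$. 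This is exactly the shape Lemma~\ref{l4} is built for: applying it to $u_n$, $u$, and crucially to $u_n-u$ (whose initial datum $\phi_n-\phi$ is handled by the internal splitting of Lemma~\ref{l4}) produces the factor $T^{\alpha(\frac{1}{\alpha+2}-\frac{1}{\gamma})}\|u_n-u\|_{L^\gamma(I,H^{4,\rho})}^{\alpha+1}$ inside the last norm; after the $\tfrac{1}{\alpha+1}$ power this yields the linear dependence on $\|u_n-u\|_{L^\gamma(I,H^{4,\rho})}$ with the prefactor $T^{\frac{\alpha}{\alpha+1}(\frac{1}{\alpha+2}-\frac{1}{\gamma})}(M^\alpha+M^{\frac{\alpha(2\alpha+1)}{\alpha+1}})$ that (\ref{9171}) is designed to absorb. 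So the missing ingredient in your argument is precisely this H\"older identity replacing the pointwise bound; once you use it, your outline becomes the paper's proof verbatim.
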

	\begin{proof}
		Using the same method as that used to derive (\ref{ab}), we obtain
		\begin{eqnarray}\label{1118200}
		\left\|u_n-u\right\|_{L^\gamma   \left(I,H^{4,\rho}\right)}
		\lesssim\|u_n-u\|_{H^{1,\gamma } \left(I,L^\rho\right)}+\left\||u_n|^\alpha u_n-|u|^\alpha u\right\|_{L^\gamma  \left(I,L^\rho\right)}.
		\end{eqnarray}
		Next, we  estimate  $\left\||u_n|^\alpha u_n-|u|^\alpha u\right\|_{L^\gamma  \left(I,L^\rho\right)}$. It follows from  H\"older's inequality that
		\begin{eqnarray}\label{q1}
		&&\left\|\left|u_n\right|^{\alpha}u_n-\left|u\right|^{\alpha}u\right\|_{L^\gamma \left(I,L^\rho\right)}\notag\\
		&\lesssim& \left(\left\|u_n\right\|^\alpha_{L^{\left(\alpha+1\right)\gamma }\left(I,L^{\left(\alpha+1\right)\rho}\right)}+\left\|u\right\|^\alpha_{L^{\left(\alpha+1\right)\gamma }\left(I,L^{\left(\alpha+1\right)\rho}\right)}\right)\left\|u_n-u\right\|_{L^{\left(\alpha+1\right)\gamma }\left(I,L^{\left(\alpha+1\right)\rho}\right)}\notag\\
		&\lesssim &\left(\left\|\left|u_n\right|^{\alpha}u_n\right\|_{L^\gamma \left(I,L^\rho\right)}^{\frac{\alpha}{\alpha+1}}+\left\|\left|u\right|^{\alpha}u\right\|_{L^\gamma \left(I,L^\rho\right)}^{\frac{\alpha}{\alpha+1}}\right)\left\|\left|u_n-u\right|^{\alpha}\left(u_n-u\right)\right\|_{L^\gamma \left(I,L^\rho\right)}^{\frac{1}{\alpha+1}}.
		\end{eqnarray}
		Moreover, applying Lemma \ref{l4} and the embedding  (\ref{a1}), we deduce that
		\begin{eqnarray}\label{q2}
		&&\left\|\left|u_n-u\right|^{\alpha}\left(u_n-u\right)\right\|_{L^\gamma \left(I,L^\rho\right)}\notag\\
		&\lesssim& T^{\alpha \left(\frac{1}{\alpha+2}-\frac{1}{\gamma }\right)}\left(\left\|u_n-u\right\|_{L^\gamma \left(I,H^{4,\rho}\right)}^{\alpha+1}+\left\|\partial_tu_n-\partial_tu\right\|_{L^\gamma \left(I,H^{4,\rho}\right)}^{\alpha+1}\right)\notag\\
		&&+\left\|\phi_n-\phi\right\|_{H^4}^\alpha F\left(\phi_n-\phi,T\right).
		\end{eqnarray}
		Similarly, we have
		\begin{eqnarray}\label{q3}
		\left\|\left|u_n\right|^{\alpha}u_n\right\|_{L^\gamma \left(I,L^\rho\right)}
		&\lesssim& T^{\alpha \left(\frac{1}{\alpha+2}-\frac{1}{\gamma }\right)}\left(\left\|u_n\right\|_{L^\gamma \left(I,H^{4,\rho}\right)}^{\alpha+1}+\left\|\partial_tu_n\right\|_{L^\gamma \left(I,H^{4,\rho}\right)}^{\alpha+1}\right)\notag\\
		&&+\left\|\phi_n\right\|_{H^4}^\alpha F\left(\phi_n,T\right),
		\end{eqnarray}
		and
		\begin{eqnarray}\label{q4}
		\left\|\left|u\right|^{\alpha}u_n\right\|_{L^\gamma \left(I,L^\rho\right)}
		&\lesssim& T^{\alpha \left(\frac{1}{\alpha+2}-\frac{1}{\gamma }\right)}\left(\left\|u\right\|_{L^\gamma \left(I,H^{4,\rho}\right)}^{\alpha+1}+\left\|\partial_tu\right\|_{L^\gamma \left(I,H^{4,\rho}\right)}^{\alpha+1}\right)\notag\\
		&&+\left\|\phi\right\|_{H^4}^\alpha F\left(\phi,T\right).
		\end{eqnarray}
		Estimates  (\ref{q1}), (\ref{q2}), (\ref{q3}) and (\ref{q4}) imply
		\begin{eqnarray}\label{121613}
		&&\left\|\left|u_n\right|^{\alpha}u_n-\left|u\right|^{\alpha}u\right\|_{L^\gamma \left(I,L^\rho\right)}\notag\\
		&\lesssim& T^{\frac{\alpha}{\alpha+1}\left(\frac{1}{\alpha+2}-\frac{1}{\gamma }\right)}\left(M^\alpha+M^{\frac{\alpha \left(2\alpha+1\right)}{\alpha+1}}\right)\left\|u_n-u\right\|_{L^\gamma \left(I,H^{4,\rho}\right)}\notag\\
		&&+C\left(M,T\right)\left(\left\|\partial_tu_n-\partial_tu\right\|_{L^\gamma \left(I,L^\rho\right)}+\left\|\phi_n-\phi\right\|_{H^4}^{\frac{\alpha}{\alpha+1}}\right),
		\end{eqnarray}
		where we also used  (\ref{12111}) and the boundedness of $u_n,u$ in (\ref{M}), (\ref{M1}). Applying (\ref{1118200}) and (\ref{121613}), we obtain
		\begin{eqnarray}\label{121614}
	&&	\left\|u_n-u\right\|_{L^\gamma   \left(I,H^{4,\rho}\right)}\notag\\
		&\le&C_5 T^{\frac{\alpha}{\alpha+1}\left(\frac{1}{\alpha+2}-\frac{1}{\gamma }\right)}\left(M^\alpha+M^{\frac{\alpha \left(2\alpha+1\right)}{\alpha+1}}\right)\left\|u_n-u\right\|_{L^\gamma \left(I,H^{4,\rho}\right)}\notag\\
		&&+C\left(M,T\right)\left(\left\|\partial_tu_n-\partial_tu\right\|_{L^\gamma \left(I,L^\rho\right)}+\left\|\phi_n-\phi\right\|_{H^4}^{\frac{\alpha}{\alpha+1}}\right).
		\end{eqnarray}
	Note that  $C_5 T^{\frac{\alpha}{\alpha+1}\left(\frac{1}{\alpha+2}-\frac{1}{\gamma }\right)}\left(M^\alpha+M^{\frac{\alpha \left(2\alpha+1\right)}{\alpha+1}}\right)\le \frac{1}{2}$ by (\ref{9171}), it follows from Claim \ref{c2} and (\ref{121614}) that  Claim \ref{cc1} holds.
		\end{proof}
	\begin{claim}\label{c3}
		Given any $(q,r)\in\Lambda_b$, we have $\left\|u_n-u\right\|_{L^q \left(I,H^{4,r}\right)}\rightarrow0$ as $n\rightarrow\infty$.
	\end{claim}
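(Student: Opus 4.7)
My plan for proving Claim \ref{c3} is to replicate the derivation of (\ref{ab}) but for the difference $u_n - u$. Applying the equation (\ref{NLS}) to both $u_n$ and $u$ and subtracting gives the pointwise identity
\begin{eqnarray*}
    \Delta^2(u_n - u) = -i\partial_t(u_n-u) - \mu\Delta(u_n-u) - \lambda\bigl(|u_n|^\alpha u_n - |u|^\alpha u\bigr).
\end{eqnarray*}
Taking the $L^q(I,L^r)$ norm for an arbitrary biharmonic admissible pair $(q,r)\in\Lambda_b$ and applying the Gagliardo--Nirenberg interpolation $\|\Delta w\|_{L^r}\le C\|w\|_{L^r}^{1/2}\|\Delta^2 w\|_{L^r}^{1/2}$ pointwise in time (combined with Cauchy--Schwarz in $t$, exactly as in (\ref{111213})) to absorb the $\Delta(u_n-u)$ term into the left-hand side, I arrive at
\begin{eqnarray*}
    \|u_n - u\|_{L^q(I, H^{4,r})} \lesssim \|u_n - u\|_{H^{1,q}(I, L^r)} + \bigl\||u_n|^\alpha u_n - |u|^\alpha u\bigr\|_{L^q(I, L^r)}.
\end{eqnarray*}
The first term on the right vanishes as $n\to\infty$ by Claim \ref{c2}, so everything reduces to controlling the nonlinear difference in $L^q(I,L^r)$ for every admissible pair.

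To handle the nonlinearity, I would invoke Lemma \ref{l2} at the two endpoints of the biharmonic line. The estimate (\ref{111620}), together with the uniform bound (\ref{M1}), the $L^\gamma(I,H^{4,\rho})$-convergence from Claim \ref{cc1} and the $\partial_t$-convergence in $L^\gamma(I,L^\rho)$ from Claim \ref{c2}, forces
\begin{eqnarray*}
    \bigl\||u_n|^\alpha u_n - |u|^\alpha u\bigr\|_{L^\infty(I, L^2)} \longrightarrow 0.
\end{eqnarray*}
Similarly, the estimate (\ref{111630}) together with (\ref{M1}) and Claim \ref{cc1} gives
\begin{eqnarray*}
    \bigl\||u_n|^\alpha u_n - |u|^\alpha u\bigr\|_{L^2(I, L^{\frac{2N}{N-4}})} \longrightarrow 0.
\end{eqnarray*}

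To pass from these two endpoint estimates to the full family, observe that every admissible pair satisfies $\tfrac{4}{q}+\tfrac{N}{r}=\tfrac{N}{2}$ with $q\in[2,\infty]$, so the pair $(q,r)$ lies on the segment joining $(\infty,2)$ and $(2,\tfrac{2N}{N-4})$. Setting $\theta=2/q\in[0,1]$, H\"older's inequality in space and time gives
\begin{eqnarray*}
    \|f\|_{L^q(I,L^r)} \le \|f\|_{L^\infty(I,L^2)}^{1-\theta} \|f\|_{L^2(I,L^{\frac{2N}{N-4}})}^{\theta},
\end{eqnarray*}
which applied to $f=|u_n|^\alpha u_n-|u|^\alpha u$ together with the two endpoint limits gives $\||u_n|^\alpha u_n-|u|^\alpha u\|_{L^q(I,L^r)}\to 0$ for every $(q,r)\in\Lambda_b$. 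Plugging this back into the reduction above yields Claim \ref{c3}. The main technical point I expect to have to verify carefully is the applicability of (\ref{111630}), which is stated in terms of the Besov norm $L^\gamma(I,B^4_{\rho,2})$: this has to be controlled by the uniform Sobolev bound $L^\gamma(I,H^{4,\rho})$ inherited from (\ref{M1}), which is routine in view of the relationship between $H^{4,\rho}$ and $B^4_{\rho,2}$ but deserves a line of justification.
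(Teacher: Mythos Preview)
Your approach is essentially the same as the paper's: reduce via the equation and the interpolation inequality (\ref{111213}) to controlling $\||u_n|^\alpha u_n-|u|^\alpha u\|_{L^q(I,L^r)}$, then use Lemma \ref{l2} at the two endpoints and interpolate. The interpolation step is carried out exactly as you describe.

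There is, however, one genuine gap. To invoke (\ref{111620}) and (\ref{111630}) you need a \emph{uniform} bound on $\|u_n\|_{L^\infty(I,H^4)}$, not merely the bound (\ref{M1}) in $L^\gamma(I,H^{4,\rho})\cap H^{1,\gamma}(I,L^\rho)$ that you cite. The coefficient on the right-hand side of both estimates in Lemma \ref{l2} involves $\|u^j\|_{L^\infty(I,H^4)}^\alpha$, and (\ref{M1}) does not directly control this quantity. The paper handles this by first proving
\[
\sup_{n\ge n_1}\|u_n\|_{L^\infty(I,H^4)}<\infty
\]
as a separate step (see (\ref{12244})), using the equation together with Lemma \ref{l3} and the Strichartz bounds already in hand; only then can Lemma \ref{l2} be applied uniformly in $n$. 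You should insert this step before appealing to (\ref{111620}) and (\ref{111630}).

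Regarding the Besov norm in (\ref{111630}): if you look at the proof of Lemma \ref{l2}, the only embeddings used are $H^4\hookrightarrow L^{p_1}$ and $H^{4,\rho}\hookrightarrow L^{p_2}$, so the bound holds with $H^{4,\rho}$ in place of $B^4_{\rho,2}$ and your concern there is not an obstacle.
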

	\begin{proof}
		Firstly, we prove that $u_n$ is uniformly bounded in $L^\infty \left(I,H^4\right)$:
		\begin{eqnarray}\label{12244}
		\sup_{n\ge  n_1}\left\|u_n\right\|_{L^\infty \left(I,H^4\right)}<\infty.
		\end{eqnarray}
		Analogously to  (\ref{7.15}) and (\ref{ab}), we have
		\begin{eqnarray}\label{12262}
		&&\|u_n\|_{H^{1,\infty }(I,L^2)}\notag\\&\lesssim& \left\|\phi_n\right\|_{H^4}+\left\|\phi_n\right\|_{H^4}^{\alpha+1}+T^{1-\frac{\alpha+2}{\gamma }}\left\|u_n\right\|_{L^\gamma   \left(I,H^{4,\rho}\right)}^\alpha \left\|u_n\right\|_{H^{1,\gamma   } \left(I,L^\rho\right)},
		\end{eqnarray}
		and
		\begin{eqnarray}\label{12263}
		\|\Delta^2u_n\|_{L^\infty (I,L^2)}\lesssim \|\partial_t u_n\|_{L^\infty (I,L^2)}+\|u_n\|_{L^\infty (I,L^2)}+\||u_n|^\alpha u_n\|_{L^\infty (I,L^2)}.
		\end{eqnarray}
		Since $u_n$ is uniformly bounded in $L^\gamma \left(I,H^{4,\rho}\right)\cap H^{1,\gamma }\left(I,L^\rho\right)$ by (\ref{M1}), it follows from (\ref{12262}), (\ref{12263}) and  Lemma \ref{l3} that (\ref{12244}) holds.
 	
		We now resume the proof of Claim \ref{c3}. Analogously to (\ref{ab}), we have
		\begin{eqnarray}\label{121615}
		\left\|u_n-u\right\|_{L^q  \left(I,H^{4,r}\right)}
		\lesssim\|u_n-u\|_{H^{1,q} \left(I,L^r\right)}+\left\||u_n|^\alpha u_n-|u|^\alpha u\right\|_{L^q  \left(I,L^r\right)}.
		\end{eqnarray}
		Next, we  prove that
		\begin{eqnarray}\label{12116}
		\lim_{n\to\infty}\left\||u_n|^\alpha u_n-|u|^\alpha u\right\|_{L^q \left(I,L^r\right)}=0.
		\end{eqnarray}
		Since $\left\|u_n-u\right\|_{L^\gamma \left(I,H^{4,\rho}\right)}\rightarrow0$ as $n\rightarrow \infty $ by Claim \ref{cc1}, we can apply Lemma \ref{l2} to  deduce that
		\begin{eqnarray}\label{12264}
		\lim_{n\to\infty}\left\||u_n|^\alpha u_n-|u|^\alpha u\right\|_{L^\infty \left(I,L^2\right)\cap L^2\left(I,L^{\fc{2N}{N-4}}\right)}=0,
		\end{eqnarray}
		where we also used  (\ref{M}), (\ref{M1}) and (\ref{12244}).
	Applying 	(\ref{12264}) and  H\"older's inequality, we obtain   (\ref{12116}).
		
		Claim \ref{c3} is now an immediate consequence of (\ref{121615}), (\ref{12116}) and Claim \ref{c2}.
	\end{proof}
	We now resume the proof of the continuous dependence. It follows from Claims  \ref{c2} and \ref{c3} that $u_n\to u$ in $L^q \left([0,T],H^{4,r}\right)\cap H^{1,q}\left([0,T],L^r\right)$ for any biharmonic admissible pair $\left(q,r\right)\in\Lambda_b$. In particular, we have $\left\|u_n\left(T\right)-u\left(T\right)\right\|_{H^4}\underset{n\to \infty }\longrightarrow 0$. Arguing as previously, we deduce that the solution $u_n$ exists on the interval $[T,2T]$ for $n\ge n_2$ and that $u_n\to u$ in $L^q\left([T,2T],H^{4,r}\right)\cap H^{1,q}\left([T,2T],L^r\right)$ for any biharmonic admissible pair $\left(q,r\right)\in\Lambda_b$. Iterating finitely many times like this, we get the continuous dependence on the interval $[0,A]$.
\end{proof}
\section{Proof of Theorem \ref{T1.01}}\label{s5}
In this section, we prove Theorem \ref{T1.01}. For the convenience of the reader, we briefly sketch the proof. Indeed, readers seeking a fuller treatment of certain details can consult Section \ref{s4}. Throughout this section, we denote $I=[0,T]$ with $T>0$.

We first give an analogue of Proposition \ref{P4.1}.
\begin{proposition}\label{P5.1}
	Assume that $\lambda\in\mathbb{C},\mu=\pm1$ or $0, N\ge9,\alpha=\frac{8}{N-8},\phi\in H^4$ and $M$ be sufficiently small such that
	\begin{eqnarray}\label{s5M}
	\left(C_6+C_7\right)M^\alpha\le\frac12,
	\end{eqnarray}
	where $C_6,C_7$ are the constants in (\ref{12124}) and (\ref{12125}), respectively. Let $T>0$ and suppose further that
	\begin{eqnarray}\label{s5T}
	C_6\left(1+\left\|\phi\right\|_{H^4}^\alpha\right)F\left(\phi,T\right)\le \frac{M}{2}.
	\end{eqnarray}
	It follows that   there exists a unique solution $u\in C\left([0,T],H^4\right)\cap L^\gamma \left([0,T],H^{4,\rho}\right)$ to the Cauchy problem (\ref{NLS})-(\ref{12241}) with
	$
	\left\|u\right\|_{H^{1,\gamma  }\left(I,L^\rho\right)\cap L^\gamma  \left(I,H^{4,\rho}\right)}\le M.
	$
	 Moreover, we have  $u,u_{t}, \Delta^2 u \in C\left([0, T], L^{2}\left(\mathbb{R}^{N}\right)\right) \cap L^{q}\left((0, T), L^{r}\left(\mathbb{R}^{N}\right)\right)$ for every biharmonic admissible pair $(q,r)\in\Lambda_b$.
	
\end{proposition}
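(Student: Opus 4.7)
The plan is to run exactly the same contraction scheme as in Proposition \ref{P4.1}, but with the roles of the smallness parameters rearranged. Since $\alpha=\frac{8}{N-8}$ makes $\gamma=\alpha+2$, the time-gain factors $T^{1-(\alpha+2)/\gamma}$ and $T^{\alpha(1/(\alpha+2)-1/\gamma)}$ both collapse to $1$. Consequently, smallness in $T$ no longer helps to close the nonlinear estimates, so I will instead extract the contraction factor from the smallness of $M$ (condition \eqref{s5M}) and control the ``initial-data driving term'' by the smallness of $F(\phi,T)$ (condition \eqref{s5T}), which can always be arranged by taking $T$ small because $F(\phi,T)\to0$ as $T\downarrow0$ by the remark at the end of Section \ref{section 02}.

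I would work on the same complete metric space $X_{T,M}$ defined in \eqref{XTM} with the metric \eqref{1218}, and show that the map $S$ from \eqref{12113} is a contraction on $X_{T,M}$. For the self-mapping property, I would bound $\|Su\|_{H^{1,\gamma}(I,L^\rho)}$ by applying \eqref{1214} of Lemma \ref{l1}, which in the critical case simplifies to
\begin{equation*}
\|Su\|_{H^{1,\gamma}(I,L^\rho)}\lesssim F(\phi,T)+\|u\|_{L^\gamma(I,H^{4,\rho})}^{\alpha}\|u\|_{H^{1,\gamma}(I,L^\rho)}.
\end{equation*}
For the spatial $L^\gamma(I,H^{4,\rho})$-norm, I would reuse the argument in \eqref{7.16}--\eqref{ab}: use the equation \eqref{1217} to express $\Delta^2(Su)$ in terms of $\partial_t(Su)$, $\Delta(Su)$ and the nonlinearity, and absorb the low-order $\Delta(Su)$ via the Gagliardo--Nirenberg/Cauchy--Schwarz trick in \eqref{111213}. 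The nonlinear piece is then handled by Lemma \ref{l4} combined with the embedding \eqref{a1}; in the critical exponent range the Lemma \ref{l4} bound specializes to
\begin{equation*}
\||u|^\alpha u\|_{L^\gamma(I,L^\rho)}\lesssim \|u\|_{L^\gamma(I,H^{4,\rho})}^{\alpha+1}+\|\partial_t u\|_{L^\gamma(I,L^\rho)}^{\alpha+1}+\|\phi\|_{H^4}^{\alpha}F(\phi,T).
\end{equation*}
Assembling these, one obtains a clean bound of the form
\begin{equation*}
\|Su\|_{H^{1,\gamma}(I,L^\rho)\cap L^\gamma(I,H^{4,\rho})}\le C_6\bigl(1+\|\phi\|_{H^4}^\alpha\bigr)F(\phi,T)+C_6 M^{\alpha+1},
\end{equation*}
which, thanks to \eqref{s5M} and \eqref{s5T}, is at most $M/2+M/2=M$.

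For the Lipschitz estimate in the metric $d$, I would mimic \eqref{12110}: in the critical case this gives directly $d(Su,Sv)\le C_7 M^\alpha\,d(u,v)\le\tfrac{1}{2}d(u,v)$ by \eqref{s5M}, with no appeal to $T$-smallness. Banach's fixed-point theorem then yields a unique solution $u\in X_{T,M}$ of \eqref{NLS}--\eqref{12241}. The further regularity statements ($u,\partial_t u,\Delta^2 u\in C([0,T],L^2)\cap L^q(I,L^r)$ for every $(q,r)\in\Lambda_b$) are obtained by repeating verbatim the argument at the end of the proof of Proposition \ref{P4.1}: apply the Strichartz inequalities \eqref{sz}, \eqref{SZ} to the Duhamel formula \eqref{12213} and its time-differentiated version to get the space-time integrability, use the equation and Lemma \ref{l3} to upgrade $\Delta^2 u$ to $C([0,T],L^2)$, and use Lemma \ref{l2} together with H\"older in time to put $|u|^\alpha u\in L^q(I,L^r)$ and hence conclude $\Delta^2 u\in L^q(I,L^r)$ for every biharmonic admissible pair.

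The main obstacle, and the only substantive departure from the proof of Proposition \ref{P4.1}, is that in the critical case $T$ cannot simultaneously kill the nonlinear self-interaction $M^{\alpha+1}$ and the initial-data driving term; this forces the two-parameter smallness requirement \eqref{s5M}--\eqref{s5T}, with $M$ chosen first (depending only on the universal constants $C_6,C_7$) and $T$ chosen afterwards so that $F(\phi,T)$ is small enough relative to the fixed $M$. Everything else is a direct adaptation of Section \ref{s4}, so I would refer to Proposition \ref{P4.1} for the shared portions of the argument.
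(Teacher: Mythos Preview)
Your proposal is correct and follows essentially the same approach as the paper: you use the same metric space $X_{T,M}$ and map $S$, invoke Lemma~\ref{l1} (specifically \eqref{1214}), the $\Delta^2$-via-equation argument \eqref{7.16}--\eqref{ab}, and Lemma~\ref{l4} with the embedding \eqref{a1} to obtain the self-mapping bound \eqref{12124}, then use \eqref{s5M}--\eqref{s5T} to close, derive the contraction exactly as in \eqref{12125}, and refer back to the end of Proposition~\ref{P4.1} for the further regularity. Your identification of the key structural point---that $\gamma=\alpha+2$ kills the time-gain factors, forcing smallness to come from $M$ and $F(\phi,T)$ separately---is precisely what the paper exploits.
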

\begin{proof}
	We look for a fixed point of the map $S$ on the space $X_{T,M}$, where $S,X_{T,M}$ were defined in (\ref{12113}) and (\ref{XTM}) respectively.
	
	We first show that $S$ maps $X_{T,M}$ into itself. Note that $\gamma =\alpha+2$ in the critical case $\alpha=\frac{8}{N-8}$, so that  by  Lemma \ref{l1} we have
	\begin{eqnarray}\label{12121}
	\|Su\|_{H^{1,\gamma}(I,L^\rho)}\lesssim F\left(\phi,T\right)+\left\|u\right\|_{L^\gamma \left(I,H^{4,\rho}\right)}^\alpha \left\|u\right\|_{H^{1,\gamma } \left(I,L^\rho\right)}.
	\end{eqnarray}
	Our next step is to estimate  $\|Su\|_{L^\gamma(I,H^{4,\rho})}$. Similarly to (\ref{ab}), we have
	\begin{eqnarray}\label{12122}
	\left\|\Delta^2\left(Su\right)\right\|_{L^\gamma \left(I,L^\rho\right)}\lesssim \left\|Su\right\|_{H^{1,\gamma }\left(I,L^\rho\right)}+\left\|\left|u\right|^{\alpha}u\right\|_{L^\gamma \left(I,L^\rho\right)}.
	\end{eqnarray}
	Moreover, it follows from Lemma \ref{l4} and the embedding (\ref{a1}) that
	\begin{eqnarray}\label{12123}
	\left\|\left|u\right|^{\alpha}u\right\|_{L^\gamma \left(I,L^\rho\right)}\lesssim \left(\left\|u\right\|_{L^\gamma \left(I,H^{4,\rho}\right)}^{\alpha+1}+\left\|\partial_tu\right\|_{L^\gamma \left(I,L^\rho\right)}^{\alpha+1}\right)+\left\|\phi\right\|_{H^4}^\alpha F\left(\phi,T\right).
	\end{eqnarray}
	It now follows from  (\ref{12121}), (\ref{12122}) and (\ref{12123}) that, for any $u\in X_{T,M}$,
	\begin{eqnarray}\label{12124}
	\left\|Su\right\|_{H^{1,\gamma }\left(I,L^\rho\right)\cap L^\gamma \left(I,H^{4,\rho}\right)}\le C_6 F\left(\phi,T\right)+C_6M^{\alpha+1}+C_6\left\|\phi\right\|_{H^4}^\alpha F\left(\phi,T\right).
	\end{eqnarray}
	Applying (\ref{s5M}), (\ref{s5T}) and (\ref{12124}), we conclude that
	\begin{eqnarray*}
		\left\|Su\right\|_{H^{1,\gamma }\left(I,L^\rho\right)\cap L^\gamma \left(I,H^{4,\rho}\right)}\le M.
	\end{eqnarray*}
	On the other hand,  given  $u,v\in X_{T,M}$, we can apply the same method as that used to derive (\ref{12110}) to obtain
	\begin{eqnarray}\label{12125}
	d(Su,Sv)
	&\lesssim& \left(\|u\|_{L^\gamma(I,H^{4,\rho})}^\alpha+\|v\|_{L^\gamma(I,H^{4,\rho})}^\alpha\right)\|u-v\|_{L^{\gamma}(I,L^{\rho})}\notag\\
	&\le& C_7M^\alpha d(u,v)\le\frac12 d\left(u,v\right).
	\end{eqnarray}
	So we prove that $S$ is a contraction on the space $X_{T,M}$. Using Banach's fixed-point argument, we obtain   a unique solution $u\in C\left([0,T],H^4\right)\cap L^\gamma \left([0,T],H^{4,\rho}\right)$ to the Cauchy problem (\ref{NLS})-(\ref{12241}). Moreover, arguing as in the last part  of Proposition \ref{P4.1}, we obtain the further regularity properties. This finishes the proof of Proposition \ref{P5.1}.
\end{proof}
\begin{proof}[\textbf{Proof of Theorem \ref{T1.01}}]
	We now resume the proof of Theorem \ref{T1.01}.  We consider only the positive time direction. A corresponding conclusion for  reverse direction follows similarly. We    proceed  as  in  the proof  of  Theorem \ref{T1.00}: Using Proposition \ref{P5.1} and the uniqueness in  Appendix, we obtain  a unique maximal solution $u\in C\left([0,T_{\text{max}}),H^4\right)$ to the Cauchy problem (\ref{NLS})-(\ref{12241}) with  $u,u_{t}, \Delta^2 u \in$ $C\left([0, T_{\text{max}}), L^{2}\left(\mathbb{R}^{N}\right)\right) \cap L^{q}_{\text{loc}}\left((0, T_{\text{max}}), L^{r}\left(\mathbb{R}^{N}\right)\right)$ for every biharmonic admissible pair $(q,r)\in\Lambda_b$.
	
	It remains to  prove the blowup alternative (\ref{bl3}). Suppose by contraction that $T_{\text{max}}<\infty$ and
	\begin{eqnarray}\label{12177}
	\|u\|_{L^\gamma((0,T_{\text{max}}),L^{\frac{\rho\alpha}{\rho-2}})}<\infty.
	\end{eqnarray}
	In fact, in the $H^4$ critical case $\alpha=\frac{8}{N-8}$, we have $\gamma =\frac{2N-8}{N-8}$ and $\frac{\rho\alpha}{\rho-2}=\frac{2N\left(N-4\right)}{\left(N-8\right)^2}$. We reach a contradiction by showing that the solution $u$ can be extended beyond the maximal interval $(0,T_{\text{max}})$. Firstly, we claim that
	\begin{equation}\label{7216}
	\left\|u\right\|_{H^{1,\gamma }\left([0,T_{\text{max}}\right),L^\rho)\cap H^{1,\infty  }\left([0,T_{\text{max}}\right),L^2)}<\infty,
	\end{equation}
	and
	\begin{eqnarray}\label{12176}
	\left\|\Delta^2u\right\|_{L^{\gamma }\left([0,T_{\text{max}}\right),L^\rho)}<\infty.
	\end{eqnarray}
	Indeed, it follows from  monotone convergence theorem that there exists  a $T_0\in[0,T_{\text{max}})$ such that
	\begin{equation}
	C_{8}\|u\|_{L^\gamma([T_0,T_{\text{max}}),L^{\frac{\rho\alpha}{\rho-2}})}^\alpha\leq\fc12,\notag
	\end{equation}
	where  $C_{8}$ is the constant in (\ref{7215}). Changing $u\left(\cdot\right)$ to $u\left(T_0+\cdot\right)$ and $\phi$ to $u\left(T_0\right)$, we can assume that $T_0=0$, so that
	\begin{equation}\label{7214}
	C_{8}\|u\|_{L^\gamma([0,T_{\text{max}}),L^{\frac{\rho\alpha}{\rho-2}})}^\alpha\leq\fc12.
	\end{equation}
	On the other hand, it follows from  (\ref{12113}), (\ref{NLS2}), (\ref{12111}) and  Strichartz' estimate that
	\begin{eqnarray}\label{7215}
	&&\|u\|_{H^{1,\gamma }([0,T),L^\rho)\cap H^{1,\infty  }\left([0,T\right),L^2)}\notag\\
	&\leq& C_{8}\left(\|\phi\|_{H^4}+\|\phi\|_{H^4}^{\alpha+1}\right)
	+C_8\left\|u\right\|_{L^\gamma([0,T),L^{\frac{\rho\alpha}{\rho-2}})}^\alpha\|u\|_{H^{1,\gamma }([0,T),L^\rho)}
	\end{eqnarray}
	for all $0<T<{T_{\text{max}}}$.
	This inequality together with (\ref{7214}) yields (\ref{7216}).
	
	Our next goal is to prove (\ref{12176}). Applying  the same method as that used to derive (\ref{ab}), we obtain
	\begin{eqnarray}\label{12178}
	\left\|\Delta^2u\right\|_{L^\gamma \left([0,T_{\text{max}}),L^\rho\right)} \lesssim \left\|u\right\|_{H^{1,\gamma } \left([0,T_{\text{max}}),L^\rho\right)}+\left\|\left|u\right|^{\alpha}u\right\|_{L^\gamma \left([0,T_{\text{max}}),L^\rho\right)}.
	\end{eqnarray}
	Note that  $q_0=p_0=\frac{\rho\alpha}{\rho-2}$ in the critical case $\alpha=\frac{8}{N-8}$ by (\ref{12171}),  (\ref{12201}),  (\ref{12172}), (\ref{12202}) and the definition of $\rho$ in (\ref{7181}), we can apply (\ref{12178}), Lemma \ref{l4}, (\ref{12177}) and (\ref{7216}) to obtain  (\ref{12176}).

	We  now apply (\ref{12177}) and (\ref{7216}) to derive a contradiction. Using the same method as that used to derive (\ref{12244}), we deduce  that  $u\in L^\infty ([0,$ $T_{\text{max}}),H^4)$. Moreover, applying  Strichartz' estimate (\ref{sz}) and  Sobolev's embedding $H^4 \hookrightarrow L^{2\left(\alpha+1\right)}$, we conclude that
	\begin{eqnarray}\label{7217}
	&&\|e^{i(t-T_\ep)(\Delta^2+\mu\Delta)}(\Delta^2+\mu\Delta)u(T_\ep)\|_{L^\gamma([T_\ep,T_{\text{max}}),L^\rho)}\notag\\
	&&\quad+\|e^{i(t-T_\ep)(\Delta^2+\mu\Delta)}|u|^\alpha u(T_\ep)\|_{L^\gamma([T_\ep,T_{\text{max}}),L^\rho)}\notag\\
	&&\qquad+\|e^{i(t-T_\ep)(\Delta^2+\mu\Delta)} u(T_\ep)\|_{L^\gamma([T_\ep,T_{\text{max}}),L^\rho)}\notag\\
	&\lesssim& \left\|u\right\|_{L^\infty \left([0,T_{\text{max}}),H^4\right)}+\left\|u\right\|_{L^\infty \left([0,T_{\text{max}}),H^4\right)}^{\alpha+1}<\infty,
	\end{eqnarray}
	for any $0<T_\ep<T_{\text{max}}$.
	It now follows from the  monotone convergence theorem that we can choose $T_\ep$ approaches to $T_{\text{max}}$ such that
	\begin{eqnarray}
	&&C_6\left(1+\left\|u\left(T_\ep\right)\right\|_{H^4}^\alpha\right)\left(\|e^{i(t-T_\ep)(\Delta^2+\mu\Delta)}(\Delta^2+\mu\Delta)u(T_\ep)\|_{L^\gamma([T_\ep,T_{\text{max}}),L^\rho)}\right.\notag\\
	&&\quad+\|e^{i(t-T_\ep)(\Delta^2+\mu\Delta)}|u|^\alpha u(T_\ep)\|_{L^\gamma([T_\ep,T_{\text{max}}),L^\rho)}\notag\\
	&&\qquad+\left.\|e^{i(t-T_\ep)(\Delta^2+\mu\Delta)} u(T_\ep)\|_{L^\gamma([T_\ep,T_{\text{max}}),L^\rho)}\right)\notag\\
	&\le&\frac{M}{4},
	\end{eqnarray}
	where the constants $M,C_6$ are the constants in (\ref{s5M}) and (\ref{12124}) respectively. Finally, applying  the monotone convergence theorem again,  we deduce that there exists $\delta_0>0$ such that
	\begin{eqnarray}\label{12265}
	&&C_6\left(1+\left\|u\left(T_\ep\right)\right\|_{H^4}^\alpha\right)\left(\|e^{i(t-T_\ep)(\Delta^2+\mu\Delta)}(\Delta^2+\mu\Delta)u(T_\ep)\|_{L^\gamma([T_\ep,T_{\text{max}}+\delta_0),L^\rho)}\right.\notag\\
	&&\quad+\|e^{i(t-T_\ep)(\Delta^2+\mu\Delta)}|u|^\alpha u(T_\ep)\|_{L^\gamma([T_\ep,T_{\text{max}}+\delta_0),L^\rho)}\notag\\
	&&\qquad+\left.\|e^{i(t-T_\ep)(\Delta^2+\mu\Delta)} u(T_\ep)\|_{L^\gamma([T_\ep,T_{\text{max}}+\delta_0),L^\rho)}\right)\notag\\
	&\le&\frac{M}{2}.
	\end{eqnarray}
	It now follows from  Proposition \ref{P5.1} and (\ref{12265}) that there exists a unique solution $v\in C([T_\ep,$ $T_{\text{max}}+\delta_0),H^4)$ to the equation (\ref{NLS}) with initial datum $u(T_\ep)$ at time $t=T_\ep$.  Moreover, by the uniqueness established in  Appendix, we see that $u=v$ on $[T_\ep,T_{\text{max}}]$, which implies that the solution can be extended beyond the maximal interval $\left(0,T_{\text{max}}\right)$. This is a contradiction, thereby completing the proof of the blowup alternative (\ref{bl3}).

	In the rest of this section, we prove the continuous dependence of the solution  map.

	Let $M>0$ satisfy
	\begin{eqnarray}\label{12151}
	\left(C_6+C_7+C_9+C_{10}\right)M^\alpha+C_{11} \left(M^\alpha+M^{\frac{\alpha \left(2\alpha+1\right)}{\alpha+1}}\right)\le\frac12
	\end{eqnarray}
	where  $C_6,C_7,C_9,C_{10},C_{11}$ are the constants  in (\ref{12124}),  (\ref{12125}), (\ref{12128}), (\ref{12127}) and (\ref{121617}) respectively. Moreover, we fix $T>0$ sufficiently small such that
	\begin{equation}\label{s5.T}
	C_6\left(1+\left\|\phi\right\|_{H^4}^\alpha\right)F\left(\phi,T\right)\le\frac M4.
	\end{equation}

	Since  $\phi_n\rightarrow\phi$ in $H^4$, we can find  a positive $n_1$ such that $C_6\left(1+\left\|\phi_n\right\|_{H^4}^\alpha\right)F\left(\phi_n,T\right)$ $\le \frac M2$
	for every $n\ge n_1$. Then it follows  from Proposition \ref{P5.1} that for every $n\ge n_1$, the equation (\ref{1221})
	admits a unique solution $u_n\in C(I,H^4)\cap  L^\gamma(I,B^{4}_{\rho,2})$ and $u_n$ is uniformly bounded:
	\begin{equation}
	\|u_n\|_{L^\gamma  \left(I,H^{4,\rho}\right)\cap H^{1,\gamma }\left(I,L^\rho\right)} \leq M,\ \text{for } \forall\  n\ge n_1,\notag
	\end{equation}where  $M$ is the  constant in (\ref{12151}).
	Furthermore, we also have the boundedness for $u$
	\begin{eqnarray}
	\|u\|_{ L^\gamma  \left(I,H^{4,\rho}\right)\cap H^{1,\gamma }\left(I,L^\rho\right)} \leq M.\notag
	\end{eqnarray}
	The proof of the continuous dependence will proceed by the following claims.
	\begin{claim}\label{c51}
		Given any $(q,r)\in\Lambda_b$, we have $\left\|u_n-u\right\|_{H^{1,q} \left(I,L^r\right)}\rightarrow0$ as $n\rightarrow\infty$.
	\end{claim}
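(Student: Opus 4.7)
The plan is to follow the two-stage bootstrap of Claim \ref{c2}, substituting the smallness of $M$ prescribed by (\ref{12151}) for the smallness of $T$ exploited there; in the critical case $\gamma=\alpha+2$ makes every factor $T^{1-(\alpha+2)/\gamma}$ appearing in Lemma \ref{l1} equal to $1$, so all absorption must be driven by $M^\alpha$ being small.

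First I would establish the weaker statement that $\|u_n-u\|_{L^q(I,L^r)\cap L^\gamma(I,L^\rho)}\to 0$ for every biharmonic admissible pair $(q,r)$. Arguing as in the derivation of (\ref{12112}) but using the uniform bound $\|u_n\|_{L^\gamma(I,H^{4,\rho})},\|u\|_{L^\gamma(I,H^{4,\rho})}\le M$ rather than $T$-smallness, one obtains
\[
\|u_n-u\|_{L^q(I,L^r)\cap L^\gamma(I,L^\rho)}\le C_9\|\phi_n-\phi\|_{L^2}+C_9 M^\alpha\|u_n-u\|_{L^q(I,L^r)},
\]
and the bound $C_9 M^\alpha\le \tfrac{1}{2}$ from (\ref{12151}) lets me absorb the last term; the residual $\|\phi_n-\phi\|_{L^2}$ tends to $0$ since $\phi_n\to\phi$ in $H^4\hookrightarrow L^2$.

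Next I would upgrade to the $H^{1,q}$ norm by invoking the full inequality (\ref{12301}) of Lemma \ref{l1} with $u^1=u_n$, $u^2=u$, for an arbitrary $R>0$. Up to initial-data contributions that vanish as $n\to\infty$, the right-hand side of (\ref{12301}) reduces to $C_{10}M^\alpha\|u_n-u\|_{H^{1,\gamma}(I,L^\rho)}$, plus a term proportional to $M^\alpha\|(\partial_tu)^R\|_{L^\gamma(I,L^\rho)}$, plus cutoff-residual contributions of the form $R^{\sigma}\|u_n-u\|^{\ast}_{L^\gamma(I,L^\rho)}$ with some positive exponents $\sigma,\ast$. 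Holding $R$ fixed and sending $n\to\infty$, the previous step forces the cutoff-residual terms to $0$, leaving
\[
\limsup_{n\to\infty}\|u_n-u\|_{H^{1,q}(I,L^r)\cap H^{1,\gamma}(I,L^\rho)}\le C_{10}M^\alpha\limsup_{n\to\infty}\|\partial_tu_n-\partial_tu\|_{L^\gamma(I,L^\rho)}+CM^\alpha\|(\partial_tu)^R\|_{L^\gamma(I,L^\rho)}.
\]
Letting $R\to\infty$, the $(\partial_tu)^R$ contribution vanishes by dominated convergence since $\partial_tu\in L^\gamma(I,L^\rho)$, and $C_{10}M^\alpha\le\tfrac{1}{2}$ permits absorption of the remaining right-hand side into the left, yielding the claim.

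The main obstacle is the order of limits: the coefficient $R^{\sigma}$ multiplying $\|u_n-u\|^{\ast}_{L^\gamma(I,L^\rho)}$ blows up with $R$, so one must send $n\to\infty$ first and $R\to\infty$ only afterwards. The absorption coefficient $C_{10}M^\alpha$ is nonetheless independent of $R$, which is exactly what the smallness assumption (\ref{12151}) guarantees by bundling all the relevant constants together; once this is recognized, the argument is otherwise parallel to Claim \ref{c2}.
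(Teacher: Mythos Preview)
Your proposal is correct and follows essentially the same two-stage route as the paper: first establish $L^q(I,L^r)$ convergence via a Strichartz estimate with absorption through $C_9M^\alpha\le\tfrac12$, then apply Lemma~\ref{l1} at fixed $R$, pass to $\limsup_{n\to\infty}$ so the $R$-dependent residuals vanish by the first step, let $R\to\infty$ to remove the $(\partial_tu)^R$ term by dominated convergence, and finally absorb via $C_{10}M^\alpha\le\tfrac12$. Your explicit remark that in the critical case $T^{1-(\alpha+2)/\gamma}=1$, forcing all absorption onto the smallness of $M$ from (\ref{12151}), captures exactly the point distinguishing this from Claim~\ref{c2}.
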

	\begin{proof}
		We first show that  $\left\|u_n-u\right\|_{L^q L^r}\rightarrow0$ as $n\rightarrow\infty$. Analogously to  (\ref{12112}), we obtain
		\begin{eqnarray}\label{12128}
		&&\left\|u_n-u\right\|_{L^q \left(I,L^r\right)}\notag\\
		&\lesssim&\left\|\phi_n-\phi\right\|_{L^2}+\left(\|u_n\|_{L^\gamma(I,H^{4,\rho})}^\alpha+\|u\|_{L^\gamma(I,H^{4,\rho})}^\alpha\right)\|u_n-u\|_{L^{\gamma}(I,L^{\rho})}\notag\\
		&\le&C_9\left\|\phi_n-\phi\right\|_{L^2}+C_9M^\alpha\left\|u_n-u\right\|_{L^q \left(I,L^r\right)}.
		\end{eqnarray}
		Since $C_9M^\alpha\le \frac{1}{2}$ by (\ref{12151}), it follows from  (\ref{12128}) that $\left\|u_n-u\right\|_{L^q \left(I,L^r\right)}\lesssim \left\|\phi_n-\phi\right\|_{L^2}$ $\rightarrow0$ as $n\rightarrow \infty $.
		
		Next, it follows from  Lemma \ref{l1} and the boundedness of $u_n,u$, we have, for any $R>0$
		\begin{eqnarray*}
			&&\limsup_{n\rightarrow\infty}\|u_n-u\|_{H^{1,q}\left(I,L^r\right)\cap H^{1,\gamma }\left(I,L^\rho\right)}\notag\\
			&\lesssim&M^\alpha \limsup_{n\rightarrow\infty}\|u_n-u\|_{H^{1,\gamma } \left(I,L^\rho\right)}+M^\alpha\|\left(\partial_tu\right)^R\|_{L^\gamma \left(I,L^\rho\right)}.
		\end{eqnarray*}
		Since  $R>0$ is arbitrary, we can let $R\rightarrow\infty$ to obtain
		\begin{eqnarray}\label{12127}
		\limsup_{n\rightarrow\infty}\|u_n-u\|_{H^{1,q}\left(I,L^r\right)\cap H^{1,\gamma }\left(I,L^\rho\right)}\le C_{10}M^\alpha \limsup_{n\rightarrow\infty}\|u_n-u\|_{H^{1,\gamma } \left(I,L^\rho\right)}
		\end{eqnarray}
		Since  $C_{10}M^\alpha\le\fc12$ by (\ref{12151}), it  follows from (\ref{12127}) that  $\limsup_{n\rightarrow\infty}\|u_n-u\|_{H^{1,q}\left(I,L^r\right)}$ $=0$. This finishes the proof of Claim \ref{c51}.
	\end{proof}
	\begin{claim}\label{cc2}
		$	\left\|u_n-u\right\|_{L^\gamma   \left(I,H^{4,\rho}\right)}\rightarrow0$ as $n\rightarrow\infty$.
	\end{claim}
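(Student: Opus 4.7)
The plan is to run the argument of Claim \ref{cc1} with one essential modification: in the critical case $\gamma=\alpha+2$, so the exponent $\frac{\alpha}{\alpha+1}\bigl(\frac{1}{\alpha+2}-\frac{1}{\gamma}\bigr)$ collapses to zero and no positive power of $T$ is available to generate the smallness that drove the subcritical proof. The smallness will instead be supplied by (\ref{12151}), which forces $M^\alpha+M^{\alpha(2\alpha+1)/(\alpha+1)}$ to be small.

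First, using that $u_n$ and $u$ both solve (\ref{NLS}) with initial data $\phi_n$ and $\phi$, I will isolate $\Delta^2(u_n-u)$ from the equation and then apply the Gagliardo-Nirenberg/absorption step of (\ref{111213}), exactly as in (\ref{ab}) and (\ref{12122}), to obtain
\[
\|u_n-u\|_{L^\gamma(I,H^{4,\rho})} \lesssim \|u_n-u\|_{H^{1,\gamma}(I,L^\rho)} + \bigl\||u_n|^\alpha u_n-|u|^\alpha u\bigr\|_{L^\gamma(I,L^\rho)}.
\]
Second, I will estimate the nonlinear difference by following (\ref{q1})--(\ref{q4}): H\"older's inequality turns it into a product of $L^{(\alpha+1)\gamma}(I,L^{(\alpha+1)\rho})$ norms, and Lemma \ref{l4} (now applied with no $T$-factor) together with (\ref{12111}) and the uniform bounds $\|u_n\|_{L^\gamma(I,H^{4,\rho})\cap H^{1,\gamma}(I,L^\rho)}\le M$ and $\|u\|_{L^\gamma(I,H^{4,\rho})\cap H^{1,\gamma}(I,L^\rho)}\le M$ yield the critical-case analogue of (\ref{121613}), namely
\[
\bigl\||u_n|^\alpha u_n-|u|^\alpha u\bigr\|_{L^\gamma(I,L^\rho)} \le C_{11}\bigl(M^\alpha+M^{\alpha(2\alpha+1)/(\alpha+1)}\bigr)\|u_n-u\|_{L^\gamma(I,H^{4,\rho})} + C(M)\bigl(\|\partial_tu_n-\partial_tu\|_{L^\gamma(I,L^\rho)}+\|\phi_n-\phi\|_{H^4}^{\alpha/(\alpha+1)}\bigr).
\]

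Combining the two displays and invoking (\ref{12151}) to absorb the $\|u_n-u\|_{L^\gamma(I,H^{4,\rho})}$ term from the right-hand side into the left, I will reduce Claim \ref{cc2} to the convergences $\|u_n-u\|_{H^{1,\gamma}(I,L^\rho)}\to 0$ (supplied by Claim \ref{c51}) and $\|\phi_n-\phi\|_{H^4}\to 0$ (hypothesis). A minor subtlety is the $F(\phi_n-\phi,T)$ contribution produced by Lemma \ref{l4} when applied with initial data $\phi_n-\phi$; by (\ref{12111}) this is controlled by a continuous function of $\|\phi_n-\phi\|_{H^4}$ and therefore vanishes with $n$. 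The main obstacle is not analytic but bookkeeping: one must track the several constants to confirm that the smallness of $M$ in (\ref{12151}) is exactly what is needed to close the absorption step, since in the critical regime there is no safety margin provided by $T$.
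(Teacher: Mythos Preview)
Your proposal is correct and follows essentially the same approach as the paper: the paper's proof of Claim \ref{cc2} simply says ``using the same method as that used to derive (\ref{121614})'' to obtain the critical analogue (\ref{121617}), then absorbs via the smallness condition (\ref{12151}) on $M$ and concludes from Claim \ref{c51}. You have correctly identified the key point that the factor $T^{\frac{\alpha}{\alpha+1}(\frac{1}{\alpha+2}-\frac{1}{\gamma})}$ collapses to $1$ in the critical case, so the absorption must come from the smallness of $M$ rather than of $T$; the remaining bookkeeping with Lemma \ref{l4}, (\ref{q1})--(\ref{q4}), and the $F(\phi_n-\phi,T)$ term is exactly as you describe.
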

	\begin{proof}
		Using the same method as that used to derive (\ref{121614}), we obtain
		\begin{eqnarray}\label{121617}
	&&\left\|u_n-u\right\|_{L^\gamma   \left(I,H^{4,\rho}\right)}\notag\\
	&\le&C_{11} \left(M^\alpha+M^{\frac{\alpha \left(2\alpha+1\right)}{\alpha+1}}\right)\left\|u_n-u\right\|_{L^\gamma \left(I,H^{4,\rho}\right)}\notag\\
	&&+C\left(M,T\right)\left(\left\|\partial_tu_n-\partial_tu\right\|_{L^\gamma \left(I,L^\rho\right)}+\left\|\phi_n-\phi\right\|_{H^4}^{\frac{\alpha}{\alpha+1}}\right).
	\end{eqnarray}
	 Since $C_{11} \left(M^\alpha+M^{\frac{\alpha \left(2\alpha+1\right)}{\alpha+1}}\right)\le \frac{1}{2}$ by (\ref{s5.T}), it  follows from Claim \ref{c51} and (\ref{121617})  that Claim \ref{cc2} holds.
	\end{proof}
	\begin{claim}\label{c52}
		Given any $(q,r)\in\Lambda_b$, we have $\left\|u_n-u\right\|_{L^q \left(I,H^{4,r}\right)}\rightarrow0$ as $n\rightarrow\infty$.
	\end{claim}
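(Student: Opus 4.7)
The plan is to mirror the proof of Claim \ref{c3} from the subcritical case, simplified by the fact that in the critical regime $\gamma=\alpha+2$, so no $T$-smallness factor is needed. The argument proceeds in three stages, carried out in the following order.

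First, I will establish that $\sup_{n\ge n_1}\|u_n\|_{L^\infty(I,H^4)}<\infty$, which is the critical-case analog of (\ref{12244}). Using the integral expression (\ref{12113}), Strichartz's estimate (\ref{sz}), and the formula (\ref{NLS2}) for $\partial_t u_n$, an estimate parallel to (\ref{12262}) yields a bound on $\|u_n\|_{H^{1,\infty}(I,L^2)}$ in terms of $\|\phi_n\|_{H^4}$ and $\|u_n\|_{L^\gamma(I,H^{4,\rho})}^\alpha\|u_n\|_{H^{1,\gamma}(I,L^\rho)}$. The equation (\ref{1217}) together with the Gagliardo-Nirenberg interpolation used in (\ref{111213}) then reduces the control of $\|\Delta^2 u_n\|_{L^\infty(I,L^2)}$ to the sum $\|\partial_t u_n\|_{L^\infty(I,L^2)}+\|u_n\|_{L^\infty(I,L^2)}+\||u_n|^\alpha u_n\|_{L^\infty(I,L^2)}$. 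Since $u_n$ is uniformly bounded in $L^\gamma(I,H^{4,\rho})\cap H^{1,\gamma}(I,L^\rho)$ by Proposition \ref{P5.1}, and Lemma \ref{l3} bounds $\||u_n|^\alpha u_n\|_{L^\infty(I,L^2)}$ in terms of those same norms and $\|\phi_n\|_{H^4}$, the uniform bound follows.

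Second, applying the same reduction as in (\ref{121615}) (i.e., the equation together with (\ref{111213}) adapted to the pair $(q,r)$) to the difference $u_n-u$ gives
\begin{equation*}
\|u_n-u\|_{L^q(I,H^{4,r})}\lesssim \|u_n-u\|_{H^{1,q}(I,L^r)}+\||u_n|^\alpha u_n-|u|^\alpha u\|_{L^q(I,L^r)}.
\end{equation*}
The first term on the right-hand side vanishes as $n\to\infty$ by Claim \ref{c51}, so it remains only to handle the nonlinear term. For this, Claim \ref{cc2} yields $\|u_n-u\|_{L^\gamma(I,H^{4,\rho})}\to0$, and Step 1 provides a uniform $L^\infty(I,H^4)$ bound for $u_n$ and $u$. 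Feeding these inputs into Lemma \ref{l2} produces
\begin{equation*}
\lim_{n\to\infty}\bigl\||u_n|^\alpha u_n-|u|^\alpha u\bigr\|_{L^\infty(I,L^2)\cap L^2(I,L^{2N/(N-4)})}=0.
\end{equation*}

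Third, I interpolate in space-time. The endpoints $(\infty,2)$ and $(2,\tfrac{2N}{N-4})$ are precisely the two extreme biharmonic admissible pairs in $\Lambda_b$; every $(q,r)\in\Lambda_b$ lies on the segment joining them and is therefore obtained by H\"older's inequality, which upgrades the convergence above to $\||u_n|^\alpha u_n-|u|^\alpha u\|_{L^q(I,L^r)}\to0$ for every admissible pair. Combined with the reduction in Step 2, this completes the proof of Claim \ref{c52}. I do not expect a serious obstacle: the main structural ingredients (Lemma \ref{l2}, Claim \ref{cc2}, and the uniform $L^\infty H^4$ bound) are already at hand, and the sole novelty relative to Claim \ref{c3} is the absence of the $T^{1-(\alpha+2)/\gamma}$ factor, which is harmless because the required smallness has been built directly into the choice of $M$ through (\ref{12151}).
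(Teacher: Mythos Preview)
Your proposal is correct and follows exactly the approach the paper takes: the paper's proof of Claim \ref{c52} consists of the single sentence ``Using the same method as that used to derive Claim \ref{c3}, we obtain Claim \ref{c52},'' and your three-step outline (uniform $L^\infty(I,H^4)$ bound as in (\ref{12244}), reduction via (\ref{121615}), then Lemma \ref{l2} with Claim \ref{cc2} and H\"older interpolation between the endpoints) is precisely that method transposed to the critical setting.
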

	\begin{proof}
		Using  the same method as that used to derive Claim \ref{c3}, we obtain Claim \ref{c52}.
	\end{proof}
	It follows from Claims \ref{c51} and \ref{c52} that the continuous dependence of the solution map holds on the interval $[0,T]$. Finally, by a standard iteration argument, we obtain the continuous dependence on $[0,A]$. 
\end{proof}

\section{THE BLOW-UP ANSATZ}\label{section 2}
The rest of this paper is devoted to the proof of Theorem \ref{T1.1}. In this section,  we construct inductively an appropriate  blow-up ansatz.
The first candidate $U_0$ is defined by (\ref{2.03}) below. $U_0$ is a natural candidate, since it is an explicit blowing-up solution of the ODE  $i \partial_tU_0+\lambda|U_0|^\alpha U_0=0$. Moreover, the error term $\Delta^2U_0+\mu\Delta U_0$ is lower order than both $i\partial_tU_0$ and $\lambda \left|U_0\right|^{\alpha}U_0$. (See Lemma \ref{L2.1} below.) Since  $\Delta^2U_0+\mu\Delta U_0$  is of order $\left(-t\right)^{-\frac{4}{k}}\left|U_0\right|\lesssim \left(-t\right)^{-\frac{1}{\alpha}-\frac{4}{k}}$, the error term is not integrable in time near the singularity when $\alpha$ is small.  To treat any subcritical or critical $\alpha$, we refine the approximate solution in the spirit of  Cazenave-Han-Martel \cite{Ca2} to reduce the singularity of the error term at any order of $\left(-t\right)$. See (\ref{2.013})-(\ref{2.017}) for more details.
Throughout this section,  we assume
\begin{equation}\label{J}
J=\left[\frac{2}{\alpha}+4 \sigma\right]+1
\end{equation}
and \begin{equation}\label{k}
k=\max\{4J+6,  [N \alpha]+1\}
\end{equation}
with
\begin{eqnarray}\label{3.4}
\sigma=\max\{\frac4\delta, \frac4{\alpha(1-\delta)}, \frac{2^{\alpha+2}|\lambda|M(-\alpha\text{Im}\lambda)^{-1}}{\min\{\alpha,1\}\left(1-\delta\right)}, g(\alpha)\},
\end{eqnarray}
where
\begin{eqnarray}\label{3.1}
\delta=\min\{\frac1{10}, \frac{\alpha}{\alpha+4}, f(\alpha)\},
\end{eqnarray}
 $M$ is the constant in Lemma \ref{nonlinear estimate}, $f(\alpha)$ and $g(\alpha)$ are defined by
\begin{equation*}
f(\alpha)=\begin{cases}
1,   & \text{if } 0<\alpha\leq 1, \\
\frac{\alpha-1}{2(\alpha+2)},   & \text{if } \alpha>1,
\end{cases}\ \ \
g(\alpha)=\begin{cases}
0,   & \text{if } 0<\alpha\leq 1, \\
\frac2{(\alpha-1)-(\alpha+2)\delta},   & \text{if } \alpha>1.
\end{cases}
\end{equation*}
Let $K$ be any nonempty compact set of $\mathbb{R}^N $  included in the ball of center 0 and radius $R>0$. It is well-known that there exists a smooth function $Z :\mathbb{R}^N\rightarrow [0, \infty)$ which vanishes exactly on $K$ (see  Lemma 1.4 in  \cite{Mo}).   Define the function $A: \mathbb{R}^N\rightarrow[0, \infty)$ by
\begin{equation}
A(x)=(Z(x) \chi(|x|)+(1-\chi(|x|))|x|)^{k}\notag
\end{equation}
where
\begin{equation*}
{\chi \in C^{\infty}(\mathbb{R},  \mathbb{R})},  \ \ {\chi(s)=\left\{\begin{array}{ll}{1},  & {0 \leq s \leq R},  \\ {0},  & {s \geq 2R}, \end{array}\right.} \ \ {\chi^{\prime}(s) \leq 0 \leq \chi(s) \leq 1,  \quad s \geq 0}.
\end{equation*}
It follows that the function $A\in C^{k-1}\left(\mathbb{R}^{N},   \mathbb{R}\right)$,   vanishes exactly on $K$,   satisfies
\begin{equation}\label{2.02}
\left\{\begin{array}{ll}{A \geq 0 \text { and }\left|\partial_{x}^{\beta} A\right| \lesssim A^{1-\frac{|\beta|}{k}}},  & {\text { on } \mathbb{R}^{N} \text { for }|\beta| \leq k-1},  \\ {A(x)=|x|^{k}},  & {\text { for } x \in \mathbb{R}^{N},  |x| \geq 2R}.\end{array}\right.
\end{equation}
Set
\begin{equation}\label{2.03}
U_{0}(t,   x)=(-\text{Im }\lambda)^{-\frac1\alpha}(-\alpha t+A(x))^{-\frac{1}{\alpha}+i \frac{\text{Re}\lambda}{\alpha\text{Im}\lambda}},  \quad t<0,   x \in \mathbb{R}^{N}.
\end{equation}
From   (\ref{71512}),  (\ref{k}), (\ref{2.02}) and (\ref{2.03}),  we have
\begin{equation*}
U_{0} \text { is } C^{\infty} \text { in } t<0 \text { and } C^{k-1} \text { in } x \in \mathbb{R}^{N},
\end{equation*}
\begin{equation}\label{2.05}
i\partial_{t} U_{0}+\lambda |U_0|^\alpha U_0=0,   \quad t<0,   x \in \mathbb{R}^{N},
\end{equation}
\begin{equation}\label{2.06}
|U_{0}|=(-\text{Im}\lambda)^{-\frac1\alpha}(-\alpha t+A(x))^{-\frac{1}{\alpha}} \leq(-\alpha\text{Im}\lambda)^{-\frac{1}{\alpha}}(-t)^{-\frac{1}{\alpha}},
\end{equation}
and
\begin{equation}\notag
\partial_{t}\left|U_{0}\right|=-\text{Im}\lambda|U_0|^{\alpha+1}\geq0.
\end{equation}
Next, we collect the estimates on $U_0$ which are from \cite{Ca2,xuan}.
\begin{lemma}\label{L2.1}
	Assume (\ref{71512}),  (\ref{k}), (\ref{2.02}), and let $U_0$ be given by (\ref{2.03}). If  $p\ge1$, then
	\begin{equation}\label{2.9}
	\left\|U_{0}(t)\right\|_{L^{p}} \lesssim(-t)^{-\frac{1}{\alpha}}
	\end{equation}
	for  $-1 \leq t<0$. In addition,   for every $\rho \in \mathbb{R},   \ell \in \mathbb{N}$ and $|\beta| \leq k-1$,
	\begin{gather}
	\left|\partial_{x}^{\beta}\left(\left|U_{0}\right|^{\rho}\right)\right|  \lesssim\left|U_{0}\right|^{\rho+\frac{\alpha}{k}|\beta|} \lesssim(-t)^{-\frac{|\beta|}{k}}\left|U_{0}\right|^{\rho}, \notag
	\\\left|\partial_{x}^{\beta}\left(\left|U_{0}\right|^{\rho-1} U_{0}\right)\right|  \lesssim\left|U_{0}\right|^{\rho+\frac{\alpha}{k}|\beta|} \lesssim(-t)^{-\frac{|\beta|}{k}}\left|U_{0}\right|^{\rho}, \label{12261}
	\\ \left|\partial_t\partial_{x}^{\beta}\left(\left|U_{0}\right|^{\rho-1} U_{0}\right)\right|  \lesssim\left|U_{0}\right|^{-1+\rho+\frac{\alpha}{k}|\beta|} \lesssim(-t)^{-1-\frac{|\beta|}{k}}\left|U_{0}\right|^{\rho}, \notag\end{gather}
	for all $x \in \mathbb{R}^{N},   t<0,  $ and
	\begin{equation}\notag
	U_{0} \in C^{\infty}\left((-\infty,   0),   H^{k-1}\left(\mathbb{R}^{N}\right)\right).
	\end{equation}
	Furthermore,   for any $x_{0} \in \mathbb{R}^{N}$ such that $A\left(x_{0}\right)=0,  $ for any $r>0,  -1 \leq t<0$ and
	$1 \leq p \leq \infty$,
	\begin{equation}\label{2.012}
	(-t)^{-\frac{1}{\alpha}+\frac{N}{p k}}\lesssim  \left\|U_{0}(t)\right\|_{L^{p}\left(\left|x-x_{0}\right|<r\right)}.
	\end{equation}
\end{lemma}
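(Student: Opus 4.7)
The plan is to exploit the explicit formula \eqref{2.03} in the compact form $|U_0|^\rho = c\,B^{-\rho/\alpha}$ with $B(t,x) = -\alpha t + A(x)$, and more generally $|U_0|^{\rho-1}U_0 = c'\,B^{-\rho/\alpha + i\theta}$ where $\theta = \mathrm{Re}\,\lambda/(\alpha\,\mathrm{Im}\,\lambda)$, so that every estimate in the lemma reduces to differentiating a (possibly complex) power of $B$ combined with the pointwise bounds on $A$ from \eqref{2.02}.

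For the upper bound \eqref{2.9}, I would split $\mathbb{R}^N$ at $|x| = 2R$. On $\{|x|\le 2R\}$ the bound \eqref{2.06} gives $|U_0(t,x)|\lesssim (-t)^{-1/\alpha}$ directly. On $\{|x|>2R\}$ we have $A(x) = |x|^k$ by \eqref{2.02}, so the rescaling $x = (-t)^{1/k}y$ turns
\[
\int_{|x|>2R}(-\alpha t + |x|^k)^{-p/\alpha}\,dx = (-t)^{-p/\alpha + N/k}\!\int_{|y|>2R(-t)^{-1/k}}\!(\alpha + |y|^k)^{-p/\alpha}\,dy;
\]
the integral on the right is finite because $k > N\alpha$ by \eqref{k}, and since $-1\le t<0$ the extra factor $(-t)^{N/(pk)}$ is bounded, giving \eqref{2.9}.

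For the pointwise derivative estimates, Faà di Bruno expands $\partial_x^\beta B^{-\rho/\alpha}$ as a finite sum of terms $B^{-\rho/\alpha - m}\prod_{j=1}^m \partial_x^{\beta_j}A$, where $m\ge 1$, each $\beta_j\ne 0$, and $\sum\beta_j = \beta$. Applying \eqref{2.02} bounds each factor by $A^{1-|\beta_j|/k}$, whose product is $A^{m - |\beta|/k}$. The crucial combinatorial point is that $|\beta|\le k-1$ and $m\ge 1$ imply $m - |\beta|/k > 0$, so $A^{m-|\beta|/k}\le B^{m-|\beta|/k}$ (using $0\le A < B$); this absorbs all $A$ factors into $B$ and yields $|\partial_x^\beta B^{-\rho/\alpha}|\lesssim B^{-\rho/\alpha - |\beta|/k}\simeq |U_0|^{\rho + \alpha|\beta|/k}$, from which the final form $\lesssim (-t)^{-|\beta|/k}|U_0|^\rho$ follows by \eqref{2.06}. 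The identical expansion with $-\rho/\alpha$ replaced by $-\rho/\alpha + i\theta$ gives \eqref{12261}, and applying one $\partial_t$ introduces an extra $B^{-1}\simeq |U_0|^\alpha \lesssim (-t)^{-1}$, yielding the mixed-derivative bound. The regularity statement $U_0\in C^\infty((-\infty,0),H^{k-1})$ then follows by combining these pointwise estimates with the first step at $p=2$ and a routine dominated-convergence argument for $t$-differentiability.

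Finally, for the localized lower bound \eqref{2.012}, the key observation is that $A^{1/k}$ is globally Lipschitz: from $\nabla(A^{1/k}) = \tfrac1k A^{1/k-1}\nabla A$ and $|\nabla A|\lesssim A^{1-1/k}$ one obtains $|\nabla(A^{1/k})|\lesssim 1$, so $A(x_0) = 0$ implies $A(x)\lesssim |x-x_0|^k$. Hence there exists $c_0>0$ such that, for $-t$ small enough, $\{|x-x_0|\le c_0(-t)^{1/k}\}\subset\{A(x)\le -\alpha t\}\cap\{|x-x_0|<r\}$, and on this set $(-\alpha t + A(x))^{-p/\alpha}\ge (2(-\alpha t))^{-p/\alpha}$, giving
\[
\|U_0(t)\|_{L^p(|x-x_0|<r)}^p \gtrsim (-t)^{-p/\alpha}\cdot(-t)^{N/k},
\]
and \eqref{2.012} follows after taking the $p$-th root; the case $p=\infty$ is just the pointwise identity $|U_0(t,x_0)| = c(-t)^{-1/\alpha}$. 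I expect the main obstacle to lie in the Faà di Bruno bookkeeping of the derivative step, specifically in checking uniformly over partitions that $m > |\beta|/k$ so that $A\le B$ can be used to absorb every $A$-factor; this is precisely where the restriction $|\beta|\le k-1$ combines with $k\ge 4J+6$ from \eqref{k} to give the clean bound.
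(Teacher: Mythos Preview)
The paper does not supply its own proof of this lemma; it simply records the estimates as taken from \cite{Ca2,xuan}. Your proposal is a correct self-contained argument and follows exactly the natural route one finds in those references: write $|U_0|^{\rho-1}U_0 = c'B^{z}$ with $B=-\alpha t+A$ and $z=-\rho/\alpha+i\theta$, apply the multivariate Fa\`a di Bruno formula together with $|\partial_x^{\beta_j}A|\lesssim A^{1-|\beta_j|/k}$ from \eqref{2.02}, absorb $A^{m-|\beta|/k}\le B^{m-|\beta|/k}$ using $m\ge 1>|\beta|/k$, and for \eqref{2.012} exploit the Lipschitz bound on $A^{1/k}$ near a zero of $A$.

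Two minor remarks. First, the condition $k\ge 4J+6$ from \eqref{k} plays no role in this lemma; only $k>N\alpha$ (for integrability in \eqref{2.9}) and $|\beta|\le k-1$ (so that $m\ge1>|\beta|/k$) are used here. The lower bound $k\ge 4J+6$ is needed only later, in the iteration of Lemma~\ref{L2.2J}. Second, your computation that $\partial_t$ contributes an extra factor $B^{-1}\simeq|U_0|^{\alpha}$ is the correct one and yields the intermediate bound $|U_0|^{\alpha+\rho+\frac{\alpha}{k}|\beta|}$; the exponent $-1+\rho+\frac{\alpha}{k}|\beta|$ printed in the third display of the lemma appears to be a typo, though the final form $(-t)^{-1-|\beta|/k}|U_0|^{\rho}$ (which is all that is used downstream) is unaffected.
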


Next,  we refine the approximate solution in the spirit of  Cazenave-Han-Martel \cite{Ca2}.
More precisely, we consider the linearization of the equation (\ref{2.05}),
\begin{equation}\label{2.013}
i\partial_{t} w+\lambda \frac{\alpha+2}{2}\left|U_{0}\right|^{\alpha} w+\lambda \frac{\alpha}{2}\left|U_{0}\right|^{\alpha-2} U_{0}^{2} \overline{w}=0.
\end{equation}
Equation (\ref{2.013}) has two particular solutions $w=i U_{0}$ and $w=$
$\partial_{t} U_{0}=i\lambda\left|U_{0}\right|^{\alpha} U_{0}$.  By means of variation of parameters,  it is not hard to see that the corresponding nonhomogeneous equation
\begin{equation}\notag
i\partial_{t} w+\lambda(\frac{\alpha+2}2|U_0|^\alpha w+\frac\alpha2|U_0|^{\alpha-2}U_0^2\overline{w})+G=0
\end{equation}
has the solution $w=\mathcal{P}(G),  $ where
\begin{eqnarray*}
	 \mathcal{P}(G)&=&\frac{i\lambda}{\text{Im}\lambda}\left|U_{0}\right|^{\alpha} U_{0} \int_{0}^{t}\left[\left|U_{0}\right|^{-\alpha-2} \text{Im}(\overline{U_{0}} G)\right](s) d s\notag
	\\&&- \frac{i}{\text{Im}\lambda}U_{0} \int_{0}^{t}\left[\left|U_{0}\right|^{-2} \text{Re}(\overline{\lambda U_{0}} G)\right](s) d s.
\end{eqnarray*}
We define $U_{j},   w_{j},   \mathcal{E}_{j}$ by
\begin{equation}\notag
w_{0}=i U_{0},   \quad \mathcal{E}_{0}=i\partial_{t} U_{0}+\Delta^2U_0+\mu\Delta U_{0}+\lambda |U_{0}|^\alpha U_0=\Delta^2U_0+\mu\Delta U_{0}
\end{equation}
and then recursively
\begin{equation}\label{2.017}
w_{j}=\mathcal{P}\left(\mathcal{E}_{j-1}\right),   \quad U_{j}=U_{j-1}+w_{j}, \quad \mathcal{E}_{j}=i\partial_{t} U_{j}+\Delta^2U_j+ \mu\Delta U_{j}+\lambda |U_{j}|^\alpha U_j
\end{equation}
for $j \geq 1,  $ as long as they make sense.  We then have the following estimates.
\begin{lemma}\label{L2.2J}
	Assume (\ref{71512}),  (\ref{J}), (\ref{k}), (\ref{2.02}), and let $U_0,U_j,w_j,\mathcal{E}_j$ be given by (\ref{2.03}) and (\ref{2.017}). There exists $-1 < T<0$ such that the following estimates hold
	\begin{equation}
	\left|\partial_{x}^{\beta}\left(U_{J}-U_{0}\right)\right| \lesssim(-t)^{1-\frac{|\beta|+4}{k}}\left|U_{0}\right|,   \quad \ 0 \leq|\beta| \leq k-1-4 J, \label{2.27J}\\
	\end{equation}
	\begin{equation}\label{2.29J}
	\left|\partial_{x}^{\beta} \mathcal{E}_{J}\right| \lesssim(-t)^{J\left(1-\frac{4}{k}\right)-\frac{|\beta|+4}{k}}\left|U_{0}\right|,   \quad \ 0 \leq|\beta| \leq k-5-4 J,
	\end{equation}
	\begin{gather}
	|\partial_t\mathcal{E}_J|\lesssim(-t)^{-1+J(1-\frac4k)-\frac4k}|U_0|,\label{2.01J}
	\end{gather}
	\begin{equation}\label{2.021J}
	\frac{1}{2}\left|U_{0}\right| \leq\left|U_{J}\right| \leq 2\left|U_{0}\right|,   \quad
	\end{equation}
	\begin{equation}\label{2.31J}
	U_{J} \in C^{1}\left((T,   0),   H^{k-1-4 J}\left(\mathbb{R}^{N}\right)\right),
	\end{equation}
	\begin{equation}
	|\partial_tU_J|\lesssim(-t)^{-1}|U_0|\label{2.02J}
	\end{equation}
	where $ T \leq t<0$,   $x \in \mathbb{R}^{N}$,  and
	\begin{equation}\label{epJ}
	\mathcal{E}_{J}=i\partial_{t} U_{J}+\Delta^2U_J+\mu \Delta U_{J}+\lambda |U_{J}|^\alpha U_J.
	\end{equation}
\end{lemma}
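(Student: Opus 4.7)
The plan is to prove Lemma \ref{L2.2J} by induction on the refinement index $j = 0, 1, \ldots, J$. At step $j$, the inductive hypothesis consists of pointwise estimates of the schematic form $|\partial_x^\beta w_j| \lesssim (-t)^{j(1-4/k) - |\beta|/k}|U_0|$ for $j \ge 1$ and $|\partial_x^\beta \mathcal{E}_j| \lesssim (-t)^{j(1-4/k) - (|\beta|+4)/k}|U_0|$, valid for $|\beta|$ up to an order decreasing linearly in $j$, together with an analogous bound on one time derivative. The base case $j = 0$ is immediate from Lemma \ref{L2.1}: $\mathcal{E}_0 = \Delta^2 U_0 + \mu\Delta U_0$, and four spatial derivatives of $U_0$ cost $(-t)^{-4/k}|U_0|$ by (\ref{12261}), while the Laplacian term is strictly lower order near $t = 0$.

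The inductive step proceeds in two parts. First, given the bound on $\mathcal{E}_{j-1}$, I would estimate $w_j = \mathcal{P}(\mathcal{E}_{j-1})$ by substituting the pointwise bound into the explicit formula for $\mathcal{P}$: the prefactors $|U_0|^\alpha U_0$ and $U_0$ combine with the integrand factors $|U_0|^{-\alpha-2}$ and $|U_0|^{-2}$ to reduce the estimate to bounding the time integral
\[
\int_0^t (-s)^{(j-1)(1-4/k) - 4/k}\, ds,
\]
which yields the desired extra factor $(-t)^{1-4/k}$ over the size of $\mathcal{E}_{j-1}$. Spatial derivatives are handled by the Leibniz rule together with (\ref{12261}), each additional derivative costing $(-t)^{-1/k}$; the permissible range in $|\beta|$ shrinks by four at each inductive step because the subsequent computation of $\mathcal{E}_j$ requires $\Delta^2 w_j$. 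Second, using the defining property of $\mathcal{P}$, a direct algebraic manipulation gives
\[
\mathcal{E}_j = \Delta^2 w_j + \mu\Delta w_j + \lambda\bigl(|U_j|^\alpha U_j - |U_{j-1}|^\alpha U_{j-1} - \tfrac{\alpha+2}{2}|U_0|^\alpha w_j - \tfrac{\alpha}{2}|U_0|^{\alpha-2}U_0^2\overline{w_j}\bigr).
\]
The first two terms apply four derivatives to $w_j$ and contribute a term of size $(-t)^{j(1-4/k) - 4/k}|U_0|$, exactly the required bound. The nonlinear piece is the difference between the full nonlinearity at $U_j$ and its linearization at $U_0$; expanding $U_{j-1} - U_0 = \sum_{l=1}^{j-1} w_l$ and using a Taylor-type expansion of $z \mapsto |z|^\alpha z$, it is of strictly smaller order.

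Once the inductive bounds hold for $j = J$, the conclusions of the lemma follow. Summing $U_J - U_0 = \sum_{j=1}^J w_j$ with the $j = 1$ term dominating gives (\ref{2.27J}); choosing $T$ close enough to $0$ so that $(-t)^{1-4/k}$ remains below a fixed small constant on $(T, 0)$ enforces $|U_J - U_0| \le \tfrac12|U_0|$ and yields (\ref{2.021J}). The Sobolev regularity (\ref{2.31J}) follows by integrating the pointwise bounds against the $\|U_0(t)\|_{L^p}$ estimates from (\ref{2.9}), noting that $k - 1 - 4J \ge 5$ by (\ref{k}). The time-derivative estimates (\ref{2.02J}) and (\ref{2.01J}) come from differentiating the same formulas: $\partial_t U_0 = i\lambda|U_0|^\alpha U_0$ has pointwise size $(-t)^{-1}|U_0|$, and each $\partial_t w_j$ is controlled either by removing the outer time integral in $\mathcal{P}$ (producing the integrand, one lower power in $(-t)$) or by differentiating its time-dependent prefactors $|U_0|^\alpha U_0$, each such differentiation costing a factor $(-t)^{-1}$.

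The main obstacle I anticipate is the careful bookkeeping of the nonlinear remainder
\[
|U_j|^\alpha U_j - |U_{j-1}|^\alpha U_{j-1} - \tfrac{\alpha+2}{2}|U_0|^\alpha w_j - \tfrac{\alpha}{2}|U_0|^{\alpha-2}U_0^2\overline{w_j},
\]
especially in the low-regularity regime $0 < \alpha < 1$ where $z \mapsto |z|^\alpha z$ is only $C^{1,\alpha}$ so a second-order Taylor expansion is unavailable. In that range I would rely on Hölder-type inequalities such as $\bigl||z+h|^\alpha(z+h) - |z|^\alpha z\bigr| \lesssim (|z|^\alpha + |h|^\alpha)|h|$ together with the smallness $|w_j| \ll |U_0|$ guaranteed at the previous inductive step, and invoke the $1_{0<\alpha<1}$ versus $1_{\alpha>1}$ dichotomy familiar from Lemma \ref{l1} to handle spatial derivatives of fractional-power nonlinearities. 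This closes the induction on the full range of $\alpha$ allowed by (\ref{71512}).
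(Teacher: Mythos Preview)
Your proposal is correct and follows essentially the same approach as the paper: an induction on $j$ with the pointwise hypotheses $|\partial_x^\beta w_j|\lesssim(-t)^{j(1-4/k)-|\beta|/k}|U_0|$ and $|\partial_x^\beta\mathcal{E}_j|\lesssim(-t)^{j(1-4/k)-(|\beta|+4)/k}|U_0|$ (plus their time-differentiated versions), with the admissible range in $|\beta|$ dropping by four at each step and the conclusions read off at $j=J$. In fact the paper itself gives fewer details than you do, simply stating the inductive claim and referring to \cite{Ca2,xuan} with the remark that one replaces every $\tfrac{2}{k}$ by $\tfrac{4}{k}$ to account for $\Delta^2$ in place of $\Delta$.
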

\begin{proof}
	We claim that  for all $0 \leq j \leq \frac{k-6}{4}$ the following inequalities hold \\
	$(1)$ If $0 \leq|\beta| \leq k-1-4 j,  $ then
	\begin{gather}
	\left|\partial_{x}^{\beta} w_{j}\right| \lesssim(-t)^{j\left(1-\frac{4}{k}\right)-\frac{|\beta|}{k}}\left|U_{0}\right|,   \quad T \leq t<0,   x \in \mathbb{R}^{N}, \notag\\
	\left|\partial_t\partial_{x}^{\beta} w_{j}\right| \lesssim(-t)^{-1+j\left(1-\frac{4}{k}\right)-\frac{|\beta|}{k}}\left|U_{0}\right|,   \quad T \leq t<0,   x \in \mathbb{R}^{N}, \notag\\
	\left|\partial_{x}^{\beta}\left(U_{j}-U_{0}\right)\right| \lesssim(-t)^{1-\frac{|\beta|+4}{k}}\left|U_{0}\right|,   \quad T \leq t<0,   x \in \mathbb{R}^{N}, \notag
	\end{gather}
	$(2)$ If $0 \leq|\beta| \leq k-5-4 j,  $ then
	\begin{gather}
	\left|\partial_{x}^{\beta} \mathcal{E}_{j}\right| \lesssim(-t)^{j\left(1-\frac{4}{k}\right)-\frac{|\beta|+4}{k}}\left|U_{0}\right|,   \quad T \leq t<0,   x \in \mathbb{R}^{N},\notag
	\\
	|\partial_t \mathcal{E}_j|\lesssim(-t)^{-1+j(1-\frac4k)-\frac4k}|U_0|\notag,\\
	\notag
	\frac{1}{2}\left|U_{0}\right| \leq\left|U_{j}\right| \leq 2\left|U_{0}\right|,   \quad T \leq t<0,   x \in \mathbb{R}^{N},
	\\\notag
	U_{j} \in C^{1}\left((T,   0),   H^{k-1-4j}\left(\mathbb{R}^{N}\right)\right),
	\\\notag
	|\partial_tU_j|\lesssim(-t)^{-1}|U_0|.
	\end{gather}
	In fact, the proof is an obvious adaptation of  \cite{Ca2,xuan}.   More precisely, we just need to replace all $\frac2k$ with $\frac4k$ throughout the proof of Lemma 3.2 in \cite{Ca2} and Lemma 2.2 in \cite{xuan} by considering the  presence of $\Delta^2 U_j$. Finally, note that $0\le J\le\fc{k-6}4$ by (\ref{k}), we complete the proof of Lemma \ref{L2.2J} by setting $j=J$.
\end{proof}
\begin{lemma}\label{nonlinear estimate}
	Assume  $\alpha>0$, $p\in\mathbb{R}, l\in\mathbb{R}$ and $p+l>0$. There exists a constant $M\geq1$ such that for all complex number $u,v\in \mathbb{C}$,
	\begin{eqnarray}\label{121710}
	\left||u+v|^p(u+v)^l-|u|^pu^l\right|\leq M\left(\left|v\right|^{p+l}+1_{p+l>1}\left|u\right|^{p+l-1}\left|v\right|\right)
	\end{eqnarray}
	and
	\begin{equation}
	\left||u+v|^p(u+v)^l-|u|^pu^l\right|\leq M\left(\left|u\right|^{p+l-1}\left|v\right|+1_{p+l>1}\left|v\right|^{p+l}\right)\label{2.0032}
	\end{equation}
	Moreover, we can decompose
	\begin{equation}
	\partial_t\left(|u+v|^\alpha(u+v)-|u|^\alpha u\right)=I_1\left(u,v\right)+I_2\left(u,v\right),\label{3.0021}
	\end{equation}
	with
	\begin{equation}\label{12163}
	|I_1\left(u,v\right)|\leq M|u|^\alpha|\partial_t v|
	\end{equation}
	and
	\begin{eqnarray}\label{12164}
	|I_2\left(u,v\right)|&\leq& M\left(\left|u\right|^{\alpha-1}\left|v\right|+1_{\alpha>1}\left|v\right|^{\alpha}\right)|\partial_tu|\notag\\
	&&+\left(\left|v\right|^{\alpha}+1_{\alpha>1}\left|u\right|^{\alpha-1}\left|v\right|\right)|\partial_tv|
	\end{eqnarray}
	for all complex number $u,v\in \mathbb{C}$. \end{lemma}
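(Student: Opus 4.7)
The plan is to prove the two parts of the lemma in turn: first the purely algebraic pointwise bounds \eqref{121710}--\eqref{2.0032}, then the time-derivative decomposition \eqref{3.0021} with its estimates \eqref{12163}--\eqref{12164}.

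For \eqref{121710} and \eqref{2.0032}, I view $f(z)=|z|^p z^l$ as a function (well-defined on $\mathbb{C}\setminus\{0\}$ after a local choice of branch) with $|f(z)|=|z|^{p+l}$ and Wirtinger derivatives of modulus $|\nabla f(z)|\lesssim|z|^{p+l-1}$ (since $p+l>0$). I then split into two regimes according to the size of $|v|$ relative to $|u|$. In the large-perturbation regime $|v|\geq\tfrac12|u|$ the triangle inequality gives $|f(u+v)-f(u)|\lesssim(|u|+|v|)^{p+l}+|u|^{p+l}\lesssim|v|^{p+l}$; in the small-perturbation regime $|v|<\tfrac12|u|$ the segment $\{u+\theta v:0\leq\theta\leq1\}$ avoids $0$ with $|u+\theta v|\sim|u|$, so the fundamental theorem of calculus along that segment yields $|f(u+v)-f(u)|\lesssim|u|^{p+l-1}|v|$. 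Both \eqref{121710} and \eqref{2.0032} then follow by comparing the two bounds in the ``wrong'' regime: since $|u|$ and $|v|$ are then of the same order and $x\mapsto x^{p+l-1}$ is monotone of the appropriate sign, $|v|^{p+l}$ and $|u|^{p+l-1}|v|$ are comparable precisely when $p+l\leq1$, while for $p+l>1$ one must keep the second term separately, which is the role of the indicator $1_{p+l>1}$ in each bound.

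For the decomposition \eqref{3.0021}, I begin from the explicit Wirtinger expansion
\[
\partial_t(|u|^\alpha u)=\tfrac{\alpha+2}{2}|u|^\alpha\partial_t u+\tfrac{\alpha}{2}|u|^{\alpha-2}u^2\partial_t\overline u,
\]
and isolate the part in which the coefficients depend on $u$ but not on $u+v$. Concretely I set
\[
I_1(u,v)=\tfrac{\alpha+2}{2}|u|^\alpha\partial_t v+\tfrac{\alpha}{2}|u|^{\alpha-2}u^2\partial_t\overline v,
\]
for which \eqref{12163} is immediate with constant $\alpha+1$, and then the remainder is
\[
I_2(u,v)=A(u,v)(\partial_t u+\partial_t v)+B(u,v)(\partial_t\overline u+\partial_t\overline v),
\]
with $A=\tfrac{\alpha+2}{2}(|u+v|^\alpha-|u|^\alpha)$ and $B=\tfrac{\alpha}{2}\bigl(|u+v|^{\alpha-2}(u+v)^2-|u|^{\alpha-2}u^2\bigr)$. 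Both $A$ and $B$ are exactly of the form treated by \eqref{121710}--\eqref{2.0032} with $p+l=\alpha>0$; applying \eqref{2.0032} to the pieces multiplying $|\partial_t u|$ and \eqref{121710} to the pieces multiplying $|\partial_t v|$ produces precisely the right-hand side of \eqref{12164}.

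The main technical obstacle I anticipate is keeping the case analysis in the first step uniform near the singular set $\{u=0\}$: when $u=0$ the factor $|u|^{p+l-1}$ must be read as $+\infty$ for $p+l<1$ so that the bound is vacuously true, while the strict inequality $|v|<\tfrac12|u|$ is exactly what is needed to ensure the mean-value segment avoids $0$ and the pointwise bound $|\nabla f|\lesssim|z|^{p+l-1}$ can be applied. A related minor point is the multi-valuedness of $z^l$ for non-integer $l$, which I handle by choosing a continuous local branch along the segment from $u$ to $u+v$; this does not enter the modulus estimates and hence not the final inequalities.
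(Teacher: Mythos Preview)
Your proposal is correct and follows essentially the same route as the paper. The paper normalizes by $u$ (writing $z=v/u$ and proving the inequality for $|1+z|^p(1+z)^l-1$) rather than splitting directly on $|v|\gtrless\tfrac12|u|$, but this is the same case analysis in different clothing; your decomposition $I_1,I_2$ and the use of \eqref{121710}--\eqref{2.0032} on the coefficients $A,B$ match the paper's argument exactly.
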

\begin{proof}
	The proof of the estimate (\ref{121710}) is standard, we omit its proof by simplicity.
	
	We now prove  (\ref{2.0032}). It suffices to show that for any $z\in\mathbb{C}$, we have
	\begin{eqnarray}\label{11191}
	\left|\left|1+z\right|^{p}\left(1+z\right)^l-1)\right|\le M\left(\left|z\right|+1_{p+l>1}\left|z\right|^{p+l}\right).
	\end{eqnarray}
	Let $z\in\mathbb{C}$,  $\left|z\right|\ge\frac12$. Note that  $\left|z\right|^{p+l}\lesssim \left|z\right|$ if  $p+l\le1$, so that
	\begin{eqnarray}
	\left|\left|1+z\right|^{p}\left(1+z\right)^l-1)\right|\lesssim \left|z\right|^{p+l}+1\lesssim \left|z\right|+1_{p+l>1}\left|z\right|^{p+l}.\notag
	\end{eqnarray}
	For $\left|z\right|\le\frac12$, we  write
	\begin{eqnarray}\label{12131}
	&&\left|1+z\right|^{p}\left(1+z\right)^l-1=\int_{0}^{1}\frac{d}{d\theta}\left[\left|1+\theta z\right|^{p}\left(1+\theta z\right)^l\right] d\theta.
	\end{eqnarray}
	Note that
	\begin{eqnarray}
	\frac{d}{d\theta}\left[\left|1+\theta z\right|^{p}\left(1+\theta z\right)^l\right]
	&=&\left(\frac p2+l\right)\left|1+\theta z\right|^{p}\left(1+\theta z\right)^{l-1}z \notag\\
	&& +\frac p2\left|1+\theta z\right|^{p-2}\left(1+\theta z\right)^{l+1}\overline z,\notag
	\end{eqnarray}
	and $\frac12\le \left|1+\theta z\right|\le\frac32$ for any $\theta\in[0,1], \left|z\right|\le \frac{1}{2}$, we have
	\begin{eqnarray}
	\left|\frac{d}{d\theta}\left[\left|1+\theta z\right|^{p}\left(1+\theta z\right)^l\right]\right|\le \left(p+l\right)\left|1+\theta z\right|^{p+l-1}\left|z\right|\lesssim \left|z\right|.\notag
	\end{eqnarray}
	This inequality together with (\ref{12131}) gives (\ref{11191}), which in turn proves (\ref{2.0032}).
	
	We  turn now  to the proof of (\ref{3.0021})-(\ref{12164}). Since
	\begin{equation}
	\partial_t \left(|u|^\alpha u\right)=\frac{\alpha+2}2|u|^\alpha\partial_t u+\frac\alpha2|u|^{\alpha-2}u^2\partial_t\overline u,\notag
	\end{equation}
	we can decompose
	\begin{equation}
	\partial_t\left(|u+v|^\alpha(u+v)-|u|^\alpha u\right)=I_1+I_2,\notag
	\end{equation}
	where
	\begin{equation}
	I_1\left(u,v\right)=\frac{\alpha+2}2|u|^\alpha\partial_t v+\frac\alpha2|u|^{\alpha-2}u^2\overline{\partial_t v}\notag
	\end{equation}
	and
	\begin{eqnarray}\label{2.0040}
	I_2\left(u,v\right)&=&\frac{\alpha+2}2\left(|u+v|^\alpha-|u|^\alpha\right)\left(\partial_t u+\partial_t v\right)
	\notag\\
	&&+\frac\alpha2\left(|u+v|^{\alpha-2}(u+v)^2-|u|^{\alpha-2}u^2\right)\left(\overline{\partial_t u}+\overline{\partial_t v}\right).
	\end{eqnarray}
	Clearly, $|I_1|\leq(\alpha+1)|u|^\alpha|\partial_t v|$. Moreover, applying (\ref{2.0040}), (\ref{121710}) and  (\ref{2.0032}), we obtain the estimate of  $I_2$  in (\ref{12164}).

	Choosing $M$ larger enough, we complete the proof of Lemma \ref{nonlinear estimate}
\end{proof}
\section{CONSTRUCTION AND ESTIMATES OF APPROXIMATE SOLUTIONS}\label{section 3}
In this section,  we construct a sequence  of solutions $u_n$ of (\ref{NLS}),  close to the ansatz $U_J$ in Lemma \ref{L2.2J},  which will eventually converge to the blowing-up solution of Theorem \ref{T1.1}. We will estimate $\varepsilon_n=u_n-U_J$ by the energy method and Strichartz estimate. More precisely,  we estimate
$$
(-t)^{-\sigma}\|\varepsilon_n\|_2+(-t)^{-(1-\delta)\sigma}\|\Delta^2\varepsilon_n\|_2+(-t)^{-(1-\frac\delta2)\sigma}\|\partial_t\varepsilon_n\|_2
$$
for some appropriate parameters $\sigma, \delta$ defined in  (\ref{3.4}) and (\ref{3.1}).

Let   the ansatz $U_{J}$  and $T<0$
be given by Lemma \ref{L2.2J}.   Since  $4 J \leq k-6$   { by }(\ref{k}), we see that
$U_{J}\left(-\frac1n\right) \in H^{4}\left(\mathbb{R}^{N}\right)$  by (\ref{2.31J}).  Therefore, we can deduce from Theorems \ref{T1.00} and \ref{T1.01}  that there exist $s_{n}<-\frac1n$
and a unique maximal solution $u_{n} \in C\left(\left(s_{n},   -\frac1n\right],   H^{4}\left(\mathbb{R}^{N}\right)\right) $ to the following nonlinear fourth-order Schr\"odinger equation
\begin{equation}\label{3.7}
\left\{\begin{array}{l}{i\partial_{t} u_{n}+ \Delta^2 u_{n}+\mu\Delta u_n+\lambda \left|u_{n}\right|^\alpha u_n}=0,  \\ {u_{n}\left(-\frac1n\right)=U_{J}\left(-\frac1n\right)}, \end{array}\right.
\end{equation}
with the blowup alternative that if $s_{n}>$
$-\infty,  $ then
\begin{equation}\label{3.06}
\left\|u_{n}(t)\right\|_{H^{4}} \underset{t \downarrow_{s_{n}}}{\longrightarrow} \infty,
\end{equation}
in the subcritical case $\alpha>0$, $(N-8)\alpha<8$, and
\begin{equation}\label{3.006}
\|u_{n}(t)\|_{L^{\fc{2N-8}{N-8}}((s_n,-\fc1n],L^{\fc{2N(N-4)}{(N-8)^2}})}=\infty
\end{equation}
in the critical case $\alpha=\fc8{N-8},N\geq9$. Let  $\varepsilon_{n} \in C\left(\left(\max \left\{s_{n},   T\right\},   -\frac1n\right],   H^{4}\left(\mathbb{R}^{N}\right)\right) $ be
defined by
\begin{equation}\label{3.07}
u_{n}=U_{J}+\varepsilon_{n}.
\end{equation}
We then have the following estimate.
\begin{proposition}\label{P3.1}
	Assume $ T\le T_0<0$ satisfies
	\begin{eqnarray*}
		C_{11}\left(-T_0\right)^{-1+\min\{\alpha,1\}\left(1-\delta\right)\sigma}+C_{12}\left(-T_0\right)^{\alpha \left(1-\delta\right)\sigma}\le \frac{1}{2},
	\end{eqnarray*}
	where $C_{11},C_{12}$ are the constants in (\ref{12154}) and (\ref{12161}) respectively. There exist $T_0 \leq S<0$ and $n_0>-\frac1S$ such that $s_{n}<S,  $ for all $n\geq n_0$.
	Moreover,
	\begin{gather}
	\left\|\varepsilon_{n}(t)\right\|_{L^{2}} \leq(-t)^{\sigma},
	\|\Delta^2 \varepsilon_{n}(t)\|_{L^{2}} \leq(-t)^{(1-\delta) \sigma},\|\partial_t\ep_n(t)\|_2\leq(-t)^{(1-\frac\delta2)\sigma} \label{7219}
	\end{gather}
	for all $n\geq n_0$ and  $t\in[S, -\frac1{n}]$.
\end{proposition}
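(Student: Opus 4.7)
The plan is a backward-in-time bootstrap argument on $[\tau_n,-\tfrac{1}{n}]$, started from $\varepsilon_n(-\tfrac{1}{n})=0$ and $\partial_t\varepsilon_n(-\tfrac{1}{n})=i\mathcal{E}_J(-\tfrac{1}{n})$ (which is tiny by (\ref{2.29J}), provided $J,k$ are chosen as in (\ref{J}),(\ref{k})). Subtracting (\ref{epJ}) from (\ref{3.7}), the perturbation $\varepsilon_n=u_n-U_J$ solves
\begin{equation*}
i\partial_t\varepsilon_n+\Delta^{2}\varepsilon_n+\mu\Delta\varepsilon_n+\lambda\bigl(|u_n|^{\alpha}u_n-|U_J|^{\alpha}U_J\bigr)=-\mathcal{E}_J.
\end{equation*}
Define $\tau_n$ as the infimum of those $t\in[T_0,-\tfrac{1}{n}]$ for which (\ref{7219}) holds with constant $2$ on $[t,-\tfrac{1}{n}]$. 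The goal is to improve the constant strictly from $2$ to $1$ on $[\tau_n,-\tfrac{1}{n}]$; combined with continuity of $\|\varepsilon_n\|_{H^4}+\|\partial_t\varepsilon_n\|_2$ and the blowup alternatives (\ref{3.06})/(\ref{3.006}), this forces $\tau_n\le T_0$, hence $s_n<T_0$, and yields the proposition with $S=T_0$.

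Three coupled energy estimates will drive the bootstrap. First, an $L^2$ estimate for $\varepsilon_n$: pair the equation with $\overline{\varepsilon_n}$ and take the imaginary part; the $\Delta^2$ and $\mu\Delta$ terms drop by self-adjointness, and the splitting
\begin{equation*}
|u_n|^{\alpha}u_n-|U_J|^{\alpha}U_J=|U_J|^{\alpha}\varepsilon_n+\bigl(|u_n|^{\alpha}u_n-|U_J|^{\alpha}U_J-|U_J|^{\alpha}\varepsilon_n\bigr)
\end{equation*}
isolates a damping contribution $\mathrm{Im}\,\lambda\int|U_J|^{\alpha}|\varepsilon_n|^2\le 0$ from (\ref{71512}), together with a remainder controlled by the nonlinear estimate (\ref{121710}) of Lemma \ref{nonlinear estimate}; the source is bounded via $\|\mathcal{E}_J\|_2\|\varepsilon_n\|_2$, and (\ref{2.29J}) together with (\ref{J}),(\ref{k}) makes $\|\mathcal{E}_J\|_2$ of order $(-t)^{\nu}$ with $\nu$ arbitrarily larger than $\sigma$. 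Second, an $L^2$ estimate for $\partial_t\varepsilon_n$: time-differentiate the equation, pair with $\overline{\partial_t\varepsilon_n}$, take the imaginary part, and use the decomposition (\ref{3.0021})--(\ref{12164}) to split the time derivative of the nonlinear difference into the main linearization $I_1(U_J,\varepsilon_n)$, which produces damping of the form $\tfrac{\alpha+2}{2}\mathrm{Im}\,\lambda\int|U_J|^{\alpha}|\partial_t\varepsilon_n|^2$ plus a cross term, and a lower-order piece $I_2$ whose factors involving $\varepsilon_n$ can be absorbed via the bootstrap assumption; the source $\partial_t\mathcal{E}_J$ is handled by (\ref{2.01J}). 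Third, the $\Delta^2$ bound is recovered directly from the equation,
\begin{equation*}
\Delta^{2}\varepsilon_n=-i\partial_t\varepsilon_n-\mu\Delta\varepsilon_n-\lambda\bigl(|u_n|^{\alpha}u_n-|U_J|^{\alpha}U_J\bigr)-\mathcal{E}_J,
\end{equation*}
using Gagliardo--Nirenberg $\|\Delta\varepsilon_n\|_2\lesssim\|\varepsilon_n\|_2^{1/2}\|\Delta^{2}\varepsilon_n\|_2^{1/2}$ to absorb the low-order dispersive piece and (\ref{121710}) to bound the nonlinear difference by $|\lambda|(|U_J|^{\alpha}+|\varepsilon_n|^{\alpha})|\varepsilon_n|$.

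Combining the three bounds and integrating backwards from $-\tfrac{1}{n}$ produces estimates of the form $(-t)^{\sigma}$, $(-t)^{(1-\delta)\sigma}$, $(-t)^{(1-\frac{\delta}{2})\sigma}$, each multiplied by a prefactor of the type $C(-T_0)^{\kappa}$ with $\kappa>0$ determined by the gap between those three exponents; the smallness condition on $T_0$ in the statement is exactly what is needed to make $C(-T_0)^{\kappa}\le\tfrac{1}{2}$, yielding the strict improvement. The main obstacle will be the $\partial_t\varepsilon_n$ estimate, because $I_1$ contributes the cross term $\tfrac{\alpha}{2}\lambda|U_J|^{\alpha-2}U_J^{2}\overline{\partial_t\varepsilon_n}$, whose imaginary-part pairing with $\overline{\partial_t\varepsilon_n}$ is bounded pointwise only by $\tfrac{\alpha}{2}|\lambda||U_J|^{\alpha}|\partial_t\varepsilon_n|^2$, so the net damping coefficient $\tfrac{\alpha+2}{2}(-\mathrm{Im}\,\lambda)-\tfrac{\alpha}{2}|\lambda|$ need not be positive under (\ref{71512}). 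The way out is to keep this bad half on the Gronwall side, paying in the polynomial factor $(-t)^{\sigma}$; the choice (\ref{3.4}) of $\sigma\gtrsim|\lambda|(-\alpha\,\mathrm{Im}\,\lambda)^{-1}$ is precisely calibrated so that this Gronwall weight is controllable. A secondary difficulty is the non-Lipschitz range $0<\alpha\le 1$, where (\ref{121710}) has to be used in its $|\varepsilon_n|^{\alpha+1}$ form and the tuning of $\delta$ in (\ref{3.1}) is what prevents the resulting powers of $(-t)$ from becoming negative.
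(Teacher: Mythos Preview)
Your bootstrap skeleton and the three quantities you track match the paper's, but there is a genuine gap in the $\partial_t\varepsilon_n$ step. When you pair $I_2(U_J,\varepsilon_n)$ with $\overline{\partial_t\varepsilon_n}$, the piece $|\varepsilon_n|^{\alpha}|\partial_t\varepsilon_n|$ from (\ref{12164}) produces $\int|\varepsilon_n|^{\alpha}|\partial_t\varepsilon_n|^2$, and by H\"older this forces an $L^r$ norm of $\partial_t\varepsilon_n$ with $r>2$; similarly for the $1_{\alpha>1}$ terms. Your bootstrap hypothesis controls only $\|\partial_t\varepsilon_n\|_{L^2}$, and for $N\ge 9$ there is no Sobolev embedding $H^4\hookrightarrow L^\infty$ to rescue you via $\|\varepsilon_n\|_{L^\infty}$. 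The paper closes this by first proving an auxiliary Strichartz bound (\ref{SSnorm}) for $\partial_t\varepsilon_n$ in $L^q(I,L^r)$ with the admissible pair $(q,r)=\bigl(\tfrac{16(\alpha+1)}{\alpha},\tfrac{4(\alpha+1)}{\alpha+2}\bigr)$, derived from the Duhamel formula for $\partial_t\varepsilon_n$ and the smallness condition on $T_0$; this bound is then fed into the $I_2$ estimate (see (\ref{12168})--(\ref{121714})) and integrated in time via H\"older to close the $\|\partial_t\varepsilon_n\|_{L^2}$ inequality. Without this ingredient your scheme cannot close for $N\ge 9$.

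A second, related omission: in the critical case $\alpha=\tfrac{8}{N-8}$ the blowup alternative (\ref{3.006}) is a spacetime $L^\gamma L^{\rho\alpha/(\rho-2)}$ norm, not an $H^4$ norm, so proving $s_n<\tau_n$ requires the Strichartz bound (\ref{12204}) on $\Delta^2\varepsilon_n$; the $H^4$ bootstrap bound alone does not rule out blowup of this spacetime quantity. Finally, a minor correction: neither in the $\|\varepsilon_n\|_{L^2}$ nor in the $\|\partial_t\varepsilon_n\|_{L^2}$ estimate does the paper exploit any sign (the linearized operator has a non-definite piece $\tfrac{\alpha}{2}|U_J|^{\alpha-2}U_J^2\overline{\,\cdot\,}$); all of $I_1$ is simply bounded by $|\lambda|M|U_J|^{\alpha}|\partial_t\varepsilon_n|$ and absorbed by the Gronwall weight $(-t)^{-\sigma}$, exactly as you describe at the end, so your earlier ``damping'' language should be dropped.
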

\begin{proof}
	Throughout the proof,  we write $\varepsilon$ instead of $\varepsilon_n$ to simplify the notation. It follows from  (\ref{2.017}) and (\ref{3.07}) that
	\begin{equation}\label{equation for epsilon}
	\begin{cases}
	i\partial_t\varepsilon+\Delta^2\varepsilon+\mu\Delta\varepsilon+\lambda(|U_J+\varepsilon|^\alpha(U_J+\varepsilon)-|U_J|^\alpha U_J)+\mathcal{E}_J=0, \\
	\varepsilon(-\frac{1}{n})=0.
	\end{cases}
	\end{equation}
	Let
	\begin{equation}\label{3.13}
	\begin{aligned}
	\tau_{n}=\inf \{ &t \in [\max\{T_0,   s_{n}\},  -\frac1n] ;\|\varepsilon(s)\|_{L^{2}} \leq(-s)^{\sigma},\ \|\Delta^2\varepsilon(s)\|_2\leq(-s)^{(1-\delta)\sigma},\\& \|\partial_t\varepsilon(s)\|_2\leq(-s)^{(1-\frac{\delta}{2})\sigma}\text { for all } t<s \leq -\frac1n\}.
	\end{aligned}
	\end{equation}
	Since $\varepsilon(-\frac1n)=0$,  we see that $s_n\leq\tau_n<-\frac1n$. In what follows, we define  $I=(t,-\fc1n)$ with $\tau_n<t<-\fc1n$.
	
	Firstly, we claim that
	\begin{gather}
	\|\partial_t\varepsilon\|_{L^q(I,L^r)}+\|\partial_t\varepsilon\|_{L^\gamma(I,L^\rho)}\lesssim(-t)^{(1-2\delta)\sigma},\label{SSnorm}
	\end{gather}
	where $\left(\gamma ,\rho\right)$ is the admissible pair defined in (\ref{7181}) and
	\begin{equation}\label{721}
	(q,r)=\left(\fc{16(\alpha+1)}\alpha,\fc{4(\alpha+1)}{\alpha+2}\right)\in\Lambda_b
	\end{equation}
	is another given biharmonic admissible pair. In fact, note that
	\begin{gather}
	\varepsilon(t)=i\int_t^{-\frac1n}e^{i\left(t-s\right)(\Delta^2+\mu\Delta)}\left[\lambda(|U_J+\varepsilon|^\alpha(U_J+\varepsilon)-|U_J|^\alpha U_J)+\mathcal{E}_J\right]\left(s\right)\ ds\label{4.0016}
	\end{gather}
	by the equation (\ref{equation for epsilon}) and that
\begin{eqnarray*}
		\partial_t\varepsilon(t)&=&-ie^{i\left(t+\frac{1}{n}\right)(\Delta^2+\mu\Delta)}\mathcal{E}_J(-\frac1n)
		\\&&+i\int_t^{-\frac1n}e^{it(\Delta^2+\mu\Delta)}[\lambda\partial_s(|U_J+\varepsilon|^\alpha(U_J+\varepsilon)-|U_J|^\alpha U_J)+\partial_s\mathcal{E}_J]\left(s\right)\ ds.
\end{eqnarray*}
	Using Strichartz's estimate (\ref{SZ}) and Lemma \ref{nonlinear estimate}, we deduce that
	\begin{eqnarray}\label{4.016}
	\|\partial_t\varepsilon\|_{L^q(I,L^r)\cap L^\gamma(I,L^\rho)}
	&\lesssim&\|\mathcal{E}_J(-\frac1n)\|_2+\|I_1\left(U_J,\ep\right)\|_{L^1(I,L^2)}\notag\\
	&&+\|I_2\left(U_J,\ep\right)\|_{L^{q'}(I,L^{r'})}+\|\partial_t\mathcal{E}_J\|_{L^1(I,L^2)}.
	\end{eqnarray}
	Next, it follows from  (\ref{2.06}), (\ref{2.9}), (\ref{2.29J})-(\ref{2.021J}), (\ref{3.13}) and Lemma \ref{nonlinear estimate} that
	\begin{gather}
	\|\mathcal{E}_J(-\frac1n)\|_2\lesssim(-\frac1n)^{-\frac{1}{\alpha}+J(1-\frac4k)-\frac4k}\lesssim(-t)^{-\frac{1}{\alpha}+J(1-\frac4k)-\frac4k}\lesssim(-t)^{(1-2\delta)\sigma},\label{4.017}\\
	\|I_1\left(U_J,\ep\right)\|_{L^1(I,L^2)}\lesssim\||U_J|^\alpha|\partial_t\ep|\|_{L^1L^2}\lesssim(-t)^{(1-\frac\delta2)\sigma}\lesssim(-t)^{(1-2\delta)\sigma},\label{4.018}\\
	\|\partial_t\mathcal{E}_J\|_{L^1(I,L^2)}\lesssim(-t)^{-1-\frac{1}{\alpha}+J(1-\frac4k)-\frac4k}\lesssim(-t)^{(1-2\delta)\sigma}, \label{4.019}
	\end{gather}
	where we also used (\ref{J})-(\ref{3.4}) to ensure $-1-\frac{1}{\alpha}+J\left(1-\frac{4}{k}\right)-\frac{4}{k}\ge \left(1-2\delta\right)\sigma$.  It remains to estimate $\left\|I_2\left(U_J,\ep\right)\right\|_{L^{q'}\left(I,L^{r'}\right)}$. Note that
	\begin{eqnarray}\label{121711}
	\left|I_2\left(U_J,\ep\right)\right|&\lesssim& \left(\left|U_J\right|^{\alpha-1}\left|\ep\right|+1_{\alpha>1}\left|\ep\right|^{\alpha}\right)\left|\partial_tU_J\right|\notag\\
	&&+\left(\left|\ep\right|^{\alpha}+1_{\alpha>1}\left|U_J\right|^{\alpha-1}\left|\ep\right|\right)\left|\partial_t\ep\right|
	\end{eqnarray} by (\ref{12164}) and that
	\begin{eqnarray}\label{121712}
	\left|U_J\right|^{\alpha-1}\left|\partial_tU_J\right|\lesssim \left(-t\right)^{-1}\left|U_0\right|^{\alpha}
	\end{eqnarray}
	by (\ref{2.021J}) and (\ref{2.02J}).  It follows from (\ref{121711}), (\ref{121712}), (\ref{2.021J}), (\ref{2.02J}) and H\"older's inequality that
	\begin{eqnarray}\label{121713}
	\left\|I_2\left(U_J,\ep\right)\right\|_{L^{r'}}&\lesssim& \left(-t\right)^{-1}\left\|U_0\right\|_{2\alpha+2}^\alpha \left\|\ep\right\|_r
	+1_{\alpha>1}\left(-t\right)^{-1}\left\|\ep\right\|_{2\alpha+2}^\alpha \left\|U_0\right\|_r\notag\\
	&&+\left\|\ep\right\|_{2\alpha+2}^\alpha \left\|\partial_t\ep\right\|_r+1_{\alpha>1}\left\|U_0\right\|_{2\alpha+2}^{\alpha-1}\left\|\ep\right\|_{2\alpha+2}\left\|\partial_t\ep\right\|_r.
	\end{eqnarray}
	Applying (\ref{121713}),    (\ref{2.9}),  (\ref{3.13}) and  Sobolev's embedding $H^4\hookrightarrow L^{2\alpha+2}\cap L^r$, we conclude that
	\begin{eqnarray}
	\left\|I_2\left(U_J,\ep\right)\right\|_{L^{r'}}&\lesssim& \left(-t\right)^{-2+\left(1-\delta\right)\sigma}+1_{\alpha>1}\left(-t\right)^{-1-\frac{1}{\alpha}+\alpha \left(1-\delta\right)\sigma}\notag\\
	&&+\left(-t\right)^{\alpha \left(1-\delta\right)\sigma}\left\|\partial_t\ep\right\|_r+1_{\alpha>1}\left(-t\right)^{-1+\frac{1}{\alpha}+\left(1-\delta\right)\sigma}\left\|\partial_t\ep\right\|_r\notag\\
	&\lesssim &\left(-t\right)^{-2+\left(1-\delta\right)\sigma}+\left(-t\right)^{-1+\min\{\alpha,1\}\left(1-\delta\right)\sigma}\left\|\partial_t\ep\right\|_r\notag
	\end{eqnarray}
	Next, taking $L^{q'}\left(I\right)$ norm in the above inequality, and then  applying H\"older's inequality,  we obtain
	\begin{eqnarray}\label{4.020}
	&&\left\|I_2\left(U_J,\ep\right)(t)\right\|_{L^{q'}\left(I,L^{r'}\right)}\notag\\
	&\lesssim &\left(-t\right)^{-1-\frac{1}{q}+\left(1-\delta\right)\sigma}+\left(-t\right)^{-\frac{2}{q}+\min\{\alpha,1\}\left(1-\delta\right)\sigma}\left\|\partial_t\ep\right\|_{L^q\left(I,L^r\right)}\notag\\
	&\lesssim &\left(-t\right)^{\left(1-2\delta\right)\sigma}+\left(-t\right)^{-1+\min\{\alpha,1\}\left(1-\delta\right)\sigma}\left\|\partial_t\ep\right\|_{L^q\left(I,L^r\right)},
	\end{eqnarray}
	where we used  (\ref{3.4}) to ensure $-1-\frac{1}{q}+\left(1-\delta\right)\sigma\ge \left(1-2\delta\right)\sigma$.
	Putting  (\ref{4.016})-(\ref{4.020}) together, we deduce that
	\begin{eqnarray}\label{12154}
	&&\|\partial_t\varepsilon\|_{L^q(I,L^r)\cap L^\gamma(I,L^\rho)}\notag\\
	&\le& C_{11}\left(-t\right)^{\left(1-2\delta\right)\sigma}+ C_{11}\left(-t\right)^{-1+\min\{\alpha,1\}\left(1-\delta\right)\sigma}\left\|\partial_t\ep\right\|_{L^q\left(I,L^r\right)}.
	\end{eqnarray}
	Since $C_{11}\left(-t\right)^{-1+\min\{\alpha,1\}\left(1-\delta\right)\sigma}\le \frac{1}{2}$, we obtain (\ref{SSnorm}).

	Next,  we claim that $s_n<\tau_n$. In fact, the subcritical case $\alpha>0$, $(N-8)\alpha<8$ follows immediately from the blowup alternative (\ref{3.06}) and the definition of $\tau_n$ in (\ref{3.13}). For the  critical case $\alpha=\fc8{N-8},N\geq9$, it suffices to prove that
	\begin{eqnarray}\label{12204}
	\|\ep\|_{L^{\fc{2N-8}{N-8}}(I,L^{\fc{2N(N-4)}{(N-8)^2}})}\le C<\infty
	\end{eqnarray}
	by considering the blowup alternative (\ref{3.006}).
	
	We now prove (\ref{12204}). Applying the equation (\ref{equation for epsilon}), Lemma \ref{nonlinear estimate} and the same argument used to prove (\ref{12203}),  we conclude that
	\begin{eqnarray}\label{71868}
	&&\|\Delta^2\ep\|_{L^{\gamma}(I,L^\rho)}\notag\\
	&\lesssim& \|\ep\|_{H^{1,\gamma }(I,L^\rho)}+\|(|U_J|^\alpha+|\ep|^\alpha)\ep\|_{L^\gamma(I,L^\rho)}+\left\|\mathcal{E}_J\right\|_{L^\gamma \left(I,L^\rho\right)}
	\end{eqnarray}
	Moreover, using the same method as that used to derive (\ref{11215}) we obtain
	\begin{eqnarray}\label{12246}
	\left\|\left|\ep\right|^{\alpha}\ep\right\|_{L^\gamma \left(I,L^\rho\right)} &\lesssim& \left\|\ep\right\|_{L^\infty \left(I,L^{2\left(\alpha+1\right)}\right)}^\alpha \left(\left\|\ep\right\|_{L^\gamma L^\rho}+\left\|\Delta^2\ep\right\|_{L^\gamma L^\rho}\right),
	\end{eqnarray}
	where we used the embedding (\ref{a1}). Applying (\ref{12246}), Sobolev's embedding $H^4 \hookrightarrow L^\rho \cap L^{2\left(\alpha+1\right)}$ and (\ref{3.13}), we deduce that
	\begin{eqnarray}\label{71869}
	\left\|\left|\ep\right|^{\alpha}\ep\right\|_{L^\gamma \left(I,L^\rho\right)}
	&\lesssim &\left(-t\right)^{\alpha\left(1-\delta\right)\sigma}\left(\left(-t\right)^{\frac{1}{\gamma }+\left(1-\delta\right)\sigma}+  \left\|\Delta^2\ep\right\|_{L^\gamma \left(I,L^\rho\right)}\right)\notag\\
	&\lesssim& (-t)^{\frac{1}{\gamma }+\left(\alpha+1\right)\left(1-\delta\right)\sigma}+\left(-t\right)^{\alpha \left(1-\delta\right)\sigma}\left\|\Delta^2\ep\right\|_{L^\gamma \left(I,L^\rho\right)},
	\end{eqnarray}
	On the other hand, since $\left|U_J\right|^{\alpha}\lesssim \left(-t\right)^{-1}$ by (\ref{2.06}) and (\ref{2.021J}), we can apply  (\ref{2.9}), (\ref{2.29J}),   Sobolev's embedding $H^4 \hookrightarrow L^\rho$ and (\ref{3.13})  to obtain
	\begin{eqnarray}\label{12156}
	&&\left\|\left|U_J\right|^{\alpha}\ep\right\|_{L^\gamma \left(I,L^\rho\right)}+\left\|\mathcal{E}_J\right\|_{L^\gamma \left(I,L^\rho\right)}\notag\\
	&\lesssim& \left\|\left(-s\right)^{-1+\left(1-\delta\right)\sigma}\right\|_{L^\gamma \left(I\right)}+\left\|\left(-s\right)^{-\frac{1}{\alpha}+J\left(1-\frac{4}{k}\right)-\frac{4}{k}}\right\|_{L^\gamma \left(I\right)}\notag\\
	&\lesssim& \left(-t\right)^{-1+\left(1-\delta\right)\sigma}
	\end{eqnarray}
	where we  used (\ref{J})--(\ref{3.4}) to ensure $-\frac{1}{\alpha}+J\left(1-\frac{4}{k}\right)-\frac{4}{k}\ge-1+\left(1-\delta\right)\sigma$ in the last inequality. It follows from (\ref{SSnorm}), (\ref{71868}),  (\ref{71869}) and (\ref{12156}) that
	\begin{eqnarray}\label{12161}
	\|\Delta^2\ep\|_{L^{\gamma}(I,L^\rho)}\leq C_{12}(-t)^{-1+(1-2\delta)\sigma}+C_{12}\left(-t\right)^{\alpha \left(1-\delta\right)\sigma}\left\|\Delta^2\ep\right\|_{L^\gamma \left(I,L^\rho\right)}.
	\end{eqnarray}
	Since $C_{12}\left(-t\right)^{\alpha \left(1-\delta\right)\sigma}\le \frac{1}{2}$, we deduce that
	\begin{equation}
	\|\Delta^2\ep\|_{L^{\gamma}(I,L^\rho)}\lesssim (-t)^{-1+\left(1-2\delta\right)\sigma}.\notag
	\end{equation}
	This inequality together with Sobolev's embedding $\dot H^{4,\rho}(\R^N)\hookrightarrow L^{\fc{2N(N-4)}{(N-8)^2}}$ $ (\R^N)$ yields (\ref{12204}).
	
	We now resume the proof of Proposition \ref{P3.1}. We first estimate $\|\varepsilon(t)\|_{L^2}$. Multiplying (\ref{equation for epsilon}) by $\overline\varepsilon$ and then taking the imaginary part,  we obtain
	\begin{equation*}
	\begin{aligned}
	\frac12\frac{d}{dt}\|\varepsilon(t)\|_{L^2}^2=-\text{Im}\left(\lambda\int \left[|U_J+\varepsilon|^\alpha(U_J+\varepsilon)-|U_J|^\alpha U_J\right]\overline\varepsilon\right)-\text{Im}\int\mathcal{E}_J\bar\varepsilon.
	\end{aligned}
	\end{equation*}
	Using Lemma \ref{nonlinear estimate},   we deduce that
	\begin{equation}\label{3.15}
	\frac12\frac{d}{dt}\|\varepsilon(t)\|_{L^2}^2\geq-|\lambda|M\int(|U_J|^\alpha
	+|\varepsilon|^\alpha)|\varepsilon|^2-\|\mathcal{E}_J\|_{L^2}\|\varepsilon\|_{L^2}.
	\end{equation}
	By (\ref{2.06}) and (\ref{2.021J})
	\begin{equation}\label{3.16}
	\int|U_J|^\alpha|\varepsilon|^2\leq2^\alpha(-\alpha\text{Im}\lambda)^{-1}(-t)^{-1}\|\varepsilon\|_{L^2}^2.
	\end{equation}
	Moreover, it follows from  Sobolev's embedding $H^4(\R^N)\hookrightarrow L^{\alpha+2}(\R^N)$, (\ref{3.13}) and (\ref{2.29J}) that
	\begin{equation}\label{12152}
	\int|\varepsilon|^{\alpha+2}\lesssim\|\varepsilon\|_{H^4}^{\alpha+2}\lesssim(-t)^{(\alpha+2)(1-\delta)\sigma}\lesssim \left(-t\right)^{2\sigma},
	\end{equation}
	and
	\begin{equation}\label{12153}
	\left\|\mathcal{E}_{J}\right\|_{L^{2}}\|\varepsilon\|_{L^{2}} \lesssim(-t)^{J\left(1-\frac{4}{k}\right)-\frac{4}{k}-\frac{1}{\alpha}+\sigma}\lesssim \left(-t\right)^{2\sigma},
	\end{equation}
	where we used (\ref{J})--(\ref{3.4}) to ensure $\min\{\left(\alpha+2\right)\left(1-\delta\right)\sigma,J\left(1-\frac{4}{k}\right)-\frac{4}{k}-\frac{1}{\alpha}+\sigma\}\ge2\sigma$.

	Applying   (\ref{3.15}),  (\ref{3.16}), (\ref{12152}) and (\ref{12153}), we conclude that
	\begin{equation*}
	\frac{d}{d t}\|\varepsilon(t)\|_{L^{2}}^{2} \geq-2^{\alpha+1}( -\alpha\text{Im}\lambda)^{-1}|\lambda| M(-t)^{-1}\|\varepsilon\|_{L^{2}}^{2}-C(-t)^{2 \sigma},
	\end{equation*}
	and so
	\begin{equation*}
	\begin{aligned} \frac{d}{d t}\left((-t)^{-\sigma}\|\varepsilon(t)\|_{L^{2}}^{2}\right) &=\sigma(-t)^{-\sigma-1}\|\varepsilon(t)\|_{L^{2}}^{2}+(-t)^{-\sigma} \frac{d}{d t}\|\varepsilon(t)\|_{L^{2}}^{2} \\
	& \geq\left[\sigma-2^{\alpha+1} (-\alpha\text{Im}\lambda)^{-1}|\lambda|M\right](-t)^{-\sigma-1}\|\varepsilon(t)\|_{L^{2}}^{2}-C(-t)^{\sigma}.
	\end{aligned}
	\end{equation*}
	Using $\ep \left(-\frac{1}{n}\right)=0$ and $\sigma\ge2^{\alpha+1} (-\alpha\text{Im}\lambda)^{-1}|\lambda|M$ by (\ref{3.4}), we can integrate the above inequality on the interval  $(t,  -\frac1n)$ to obtain
	\begin{equation}\label{3.023}
	\|\varepsilon(t)\|_{L^{2}} \leq C_{13}(-t)^{\frac{1}{2}+\sigma}
	\end{equation}
	for all $t\in(\tau_n,  -\frac1n). $
	
	Our next step is to estimate  $\|\Delta^2\varepsilon\|_2$. Analogously to (\ref{71868}), we can use the equation (\ref{equation for epsilon}) and Lemma \ref{nonlinear estimate} to obtain
	\begin{eqnarray}
	\|\Delta^2\varepsilon\|_2&\lesssim&\|\partial_t\ep\|_2+\|\ep\|_2+\left\|\left(\left|U_J\right|^{\alpha}+\left|\ep\right|^{\alpha}\right)\left|\ep\right|\right\|_2+\left\|\mathcal{E}_J\right\|_{2}.\notag
	\end{eqnarray}
It follows from (\ref{2.06}), (\ref{2.9}), (\ref{2.29J}), (\ref{2.021J}), (\ref{3.13}) and the embedding $H^4\hookrightarrow L^{2\left(\alpha+1\right)}$ that
	\begin{eqnarray}\label{12169}
	\|\Delta^2\varepsilon\|_2
	&\lesssim &\left(-t\right)^{\left(1-\frac{\delta}{2}\right)\sigma}+\left(-t\right)^{-1+\sigma}+(-t)^{\left(\alpha+1\right)\left(1-\delta\right)\sigma}+(-t)^{-\frac{1}{\alpha}+J\left(1-\frac{4}{k}\right)-\frac{4}{k}}\notag\\
	&\le & C_{14}\left(-t\right)^{\left(1-\delta\right)\sigma},
	\end{eqnarray}
	where we used (\ref{J})-(\ref{3.4}) to ensure $\min\{-1+\sigma,-\frac{1}{\alpha}+J\left(1-\frac{4}{k}-\frac{4}{k}\right)\}\ge \left(1-\delta\right)\sigma$.
	
	We now estimate $\|\partial_t\ep\|_{L^2}. $ Applying $\partial_t$ to  the  equation (\ref{equation for epsilon}), and then multiplying it by $\overline{\partial_t\varepsilon}$, integrating by parts, we obtain by applying Lemma \ref{nonlinear estimate}
	\begin{eqnarray}\label{12165}
	&&\frac12\frac{d}{dt}\|\partial_t\varepsilon\|_2^2\notag\\
	&=&-\text{Im}\lambda\int\partial_t(|U_J+\varepsilon|^\alpha (U_J+\varepsilon)-|U_J|^\alpha U_J)\overline{\partial_t\varepsilon}
	-\text{Im}\int \partial_t\mathcal{E}_J\overline{\partial_t\varepsilon}\notag\\
	&=&-\text{Im}\lambda\int I_1\left(U_J,\ep\right)\overline{\partial_t\varepsilon}-\text{Im}\lambda\int I_2\left(U_J,\ep\right)\overline{\partial_t\varepsilon}-\text{Im}\int \partial_t\mathcal{E}_J\overline{\partial_t\varepsilon}.
	\end{eqnarray}
	These formal calculations can be justified by standard approximation arguments, see e.g. Proposition 3.1 in \cite{xuan}. Applying Cauchy-Schwartz inequality, (\ref{2.9}), (\ref{2.01J}) and   (\ref{3.13}),  we obtain
	\begin{eqnarray}\label{12166}
	|\int \partial_t\mathcal{E}_J\overline{\partial_t\varepsilon}|\lesssim(-t)^{-1+J(1-\frac4k)-\frac4k+(1-\frac\delta2)\sigma}\lesssim(-t)^{2\sigma},
	\end{eqnarray}
	where we used (\ref{J}) to ensure $-1+J(1-\frac4k)-\frac4k+(1-\frac\delta2)\sigma\ge2\sigma$.
	On the other hand, it follows from  (\ref{2.06}), (\ref{2.021J}) and (\ref{12163}) that
	\begin{eqnarray}\label{12167}
	|\text{Im}\lambda\int I_1\left(U_J,\ep\right)\overline{\partial_t\varepsilon}|\leq|\lambda|M\int|U_J|^\alpha
	|\partial_t\ep|^2\leq2^\alpha|\lambda|M(-\alpha\text{Im}\lambda)^{-1}\|\partial_t\ep\|_2^2.
	\end{eqnarray}
	It remains to estimate $-\text{Im}\lambda\int I_2\left(U_J,\ep\right)\overline{\partial_t\varepsilon}$. Using  (\ref{121711}), (\ref{121712}), (\ref{2.021J}), (\ref{2.02J}) and H\"older's inequality, we conclude that
	\begin{eqnarray}\label{12168}
	&&|-\text{Im}\lambda\int I_2\left(U_J,\ep\right)\overline{\partial_t\varepsilon}|\notag\\
	&\lesssim&\left(-t\right)^{-1}\left\|U_0\right\|_\infty ^\alpha \left\|\ep\right\|_2 \left\|\partial_t\ep\right\|_2+1_{\alpha>1}\left(-t\right)^{-1}\left\|\ep\right\|_{2\alpha+2}^\alpha \left\|U_0\right\|_r \left\|\partial_t\ep\right\|_r\notag\\
	&&+\left\|\ep\right\|_{2\alpha+2}^\alpha \left\|\partial_t\ep\right\|_r^2+1_{\alpha>1}\left\|U_0\right\|_{2\alpha+2}^{\alpha-1}\left\|\ep\right\|_{2\alpha+2}\left\|\partial_t\ep\right\|_{r}^2
	\end{eqnarray}
	Moreover, it follows from  (\ref{12168}),    (\ref{2.9}),  (\ref{3.13}) and  Sobolev's embedding $H^4\hookrightarrow L^{2\alpha+2}\cap L^r$ that
	\begin{eqnarray}\label{121714}
	&&|-\text{Im}\lambda\int I_2\left(U_J,\ep\right)\overline{\partial_t\varepsilon}|\notag\\
	&\lesssim& \left(-t\right)^{-2+\left(2-\frac{\delta}{2}\right)\sigma}+1_{\alpha>1}\left(-t\right)^{-1-\frac{1}{\alpha}+\alpha \left(1-\delta\right)\sigma}\left\|\partial_t\ep\right\|_r \notag\\
	&&+\left(-t\right)^{\alpha \left(1-\delta\right)\sigma}\left\|\partial_t\ep\right\|_r^2+1_{\alpha>1}\left(-t\right)^{-1+\frac{1}{\alpha}+\left(1-\delta\right)\sigma}\left\|\partial_t\ep\right\|_r^2  \notag\\
	&\lesssim &\left(-t\right)^{-2+\left(2-\frac{\delta}{2}\right)\sigma}+1_{\alpha>1}\left(-t\right)^{-2+\alpha \left(1-\delta\right)\sigma}\left\|\partial_t\ep\right\|_r\notag\\
	&&+\left(-t\right)^{-1+\min\{\alpha,1\}\left(1-\delta\right)\sigma}\left\|\partial_t\ep\right\|_r^2.
	\end{eqnarray}
	It now  follows from (\ref{12165}), (\ref{12166}), (\ref{12167}) and (\ref{121714}) that
	\begin{eqnarray*}
		\frac{d}{dt}\|\partial_t\varepsilon\|_2^2&\geq&-2^{\alpha+1}|\lambda|M(-\alpha\text{Im}\lambda)^{-1}(-t)^{-1}\|\partial_t\varepsilon\|_2^2-C(-t)^{-2+(2-\frac\delta2)\sigma}\\
		&&-C1_{\alpha>1}(-t)^{-2+\alpha(1-\delta)\sigma}\|\partial_t\ep\|_r
		-C\left(-t\right)^{-1+\min\{\alpha,1\}\left(1-\delta\right)\sigma}\left\|\partial_t\ep\right\|_r^2,
	\end{eqnarray*}
	so that
	\begin{eqnarray*}
		&&\frac{d}{dt}\left[(-t)^{-\frac12\min\{\alpha,1\}(1-\delta)\sigma}\|\partial_t\varepsilon\|_2^2\right]\\
		&\geq&\left(\frac12\min\{\alpha,1\}(1-\delta)\sigma-2^{\alpha+1}|\lambda|M(-\alpha\text{Im}\lambda)^{-1}\right)\notag\\
		&&\qquad\times(-t)^{-1-\frac12\min\{\alpha,1\}(1-\delta)\sigma}\|\partial_t\varepsilon\|_2^2-C1_{\alpha>1}\left(-t\right)^{-2+\left(\alpha-\frac{1}{2}\right) \left(1-\delta\right)\sigma}\|\partial_t\varepsilon\|_r\\
		&&-C(-t)^{-2+(2-\frac\delta2)\sigma-\frac12\min\{\alpha,1\}(1-\delta)\sigma}-C(-t)^{-1+\frac12\min\{\alpha,1\}(1-\delta)\sigma}\|\partial_t\ep\|_r^2.
	\end{eqnarray*}
	Using  $\ep \left(-\frac{1}{n}\right)=0$ and $\frac12\min\{\alpha,1\}(1-\delta)\sigma\ge2^{\alpha+1}|\lambda|M(-\alpha\text{Im}\lambda)^{-1}$ by (\ref{3.4}), we can integrate the above inequality on   $(t,  -\frac1n)$, and then apply H\"older's inequality  to obtain
	\begin{eqnarray}
	 \|\partial_t\varepsilon\|_2^2 
	&\lesssim&(-t)^{-1+(2-\frac\delta2)\sigma}+1_{\alpha>1}(-t)^{-1-\frac1q+\alpha(1-\delta)\sigma}\|\partial_t\ep\|_{L^qL^r}
\notag\\&&+(-t)^{-\frac2q+\min\{\alpha,1\}(1-\delta)\sigma}\|\partial_t\ep\|_{L^qL^r}^2\nonumber\\
	&\lesssim &\left(-t\right)^{-1+\left(2-\frac{\delta}{2}\right)\sigma}+1_{\alpha>1}\left(-t\right)^{-2+\alpha \left(1-\delta\right)\sigma+\left(1-2\delta\right)\sigma} \notag\\
	&&+(-t)^{-1+\min\{\alpha,1\}(1-\delta)\sigma+2\left(1-2\delta\right)\sigma}\notag
	\end{eqnarray}
	where in the last inequality we used (\ref{SSnorm}). Note that
	\begin{eqnarray*}
		&&\min \left\{-1+\left(2-\frac{\delta}{2}\right)\sigma,-1+\min\{\alpha,1\}(1-\delta)\sigma+2\left(1-2\delta\right)\sigma\right\}\notag\\
		&\ &\qquad\ge 2\left(1-\frac{\delta}{2}\right)\sigma+\frac{\delta\sigma}{4},
	\end{eqnarray*}
	and
	\begin{eqnarray*}
		-2+\alpha \left(1-\delta\right)\sigma+\left(1-2\delta\right)\sigma\ge2\left(1-\frac{\delta}{2}\right)\sigma+\frac{\delta\sigma}{4},\qquad \alpha>1
	\end{eqnarray*}
	by (\ref{3.1}) and (\ref{3.4}), so that
	\begin{eqnarray}\label{partialtepsilon}
	\left\|\partial_t\ep\right\|_{2}^2\le C_{15} \left(-t\right)^{2\left(1-\frac{\delta}{2}\right)\sigma+\frac{\delta\sigma}{4}}.
	\end{eqnarray}
	Set $S\in[T_0, 0)$ satisfying
	\begin{equation}\label{12211}
	C_{13}(-S)^{\frac12}\leq\frac12,\ C_{14}(-S)^{\frac{\sigma\delta}{2}}\leq\frac12
	,\ C_{15}(-S)^{\frac{\delta\sigma}4}\leq\frac12.
	\end{equation}
	It follows from (\ref{3.023}), (\ref{12169}), (\ref{partialtepsilon}) and (\ref{12211})  that for $n$ sufficiently large, we have  $S<-\frac1n$ and
	\begin{equation}\label{12212}
	\begin{aligned}
	\|\varepsilon\|_{L^2}\leq \frac{1}{2}(-t)^\sigma,\ \|\Delta^2\varepsilon\|_{L^2}\leq \frac{1}{2}(-t)^{(1-\delta)\sigma},\  \|\partial_t\ep\|_2\leq \frac{1}{2}(-t)^{(1-\frac\delta2)\sigma},
	\end{aligned}
	\end{equation}
	for all $\max\{\tau_n,S\}<t<-\frac1n$.  Suppose $S\le\tau_n$, then (\ref{12212}) holds for all $\tau_n<t<-\frac{1}{n}$. This contradicts  the definition (\ref{3.13}) of $\tau_n$ since $s_n<\tau_n$.     Therefore, we deduce that $\tau_n<S$ and (\ref{12212}) holds for all $S<t<-\frac{1}{n}$, which completes the proof of Proposition \ref{P3.1}.
\end{proof}
\section{PROOF OF THEOREM 1.3}\label{section 4}
Firstly, we show that there exists a solution $u\in  C([S, 0), H^4(\mathbb{R}^N))\cap C^1([S, 0),$ $ L^2(\mathbb{R}^N))$ to the equation (\ref{NLS}). Given $\tau\in (S,  0)$,  it follows from (\ref{7219}) that $\{\varepsilon_n\}_{n\geq\frac{1}{|\tau|}}$ is bounded in $ L^\infty([S, \tau], $ $ H^4(\mathbb{R}^N))\cap W^{1, \infty}([S, \tau], L^2(\mathbb{R}^N))$.  Therefore, after possibly extracting a subsequence, there exists $\varepsilon\in L^\infty([S, \tau], $ $ H^4(\mathbb{R}^N))\cap W^{1, \infty}([S, \tau], L^2(\mathbb{R}^N))$
such that
\begin{eqnarray}
{\varepsilon_{n} \underset{n \rightarrow \infty}{\longrightarrow} \varepsilon, \  \text { weak }^{\star}} \text { in } L^{\infty}\left([S,  \tau],   H^{4}\left(\mathbb{R}^{N}\right)\right), \label{4.4} \\
{\partial_{t} \varepsilon_{n} \underset{n \rightarrow \infty}{\longrightarrow} \partial_{t} \varepsilon, \   \text { weak }^{\star}} \text { in } L^{\infty}\left([S,  \tau],  L^2\left(\mathbb{R}^{N}\right)\right).\label{12251}
\end{eqnarray}
On the other hand, note that   $H^4(\Omega)\hookrightarrow\hookrightarrow L^{\alpha+2}(\Omega)\hookrightarrow  L^2(\Omega)$ and  $\{\varepsilon_n\}_{n\geq\frac{1}{|\tau|}}$ is uniformly bounded in $L^\infty([S, \tau], H^4(\Omega))\cap W^{1, \infty}([S, \tau],  L^2(\Omega))$,   we have (after  extracting a subsequence)
\begin{equation}\label{4.6}
\varepsilon_n\underset{n \rightarrow \infty}{\longrightarrow}\varepsilon  \textrm{ in }\  L^\infty([S, \tau], L^{\alpha+2}(\Omega))
\end{equation}
by Aubin-Lions Theorem,  see Simon \cite{Si}.

Since $\tau\in \left(S,0\right)$ is arbitrary, a standard argument of diagonal extraction shows
that there exists $\varepsilon\in L^\infty_{loc}([S, 0),  H^{4}(\mathbb{R}^N))\cap W^{1, \infty}_{loc}([S, 0), L^2($ $\mathbb{R}^N))$,
such that (after  extracting a subsequence) (\ref{4.4}), (\ref{12251}) and (\ref{4.6}) hold for all $S<\tau<0$. Moreover, (\ref{7219}), (\ref{4.4}), (\ref{12251}) and (\ref{4.6}) imply that
\begin{gather}
\left\|\varepsilon(t)\right\|_{L^{2}} \leq(-t)^{\sigma},
\left\|\Delta^2 \varepsilon(t)\right\|_{L^{2}} \leq(-t)^{(1-\delta) \sigma},\left\|\partial_t \varepsilon(t)\right\|_{L^{2}} \leq(-t)^{(1-\frac\delta2) \sigma}, \quad
\label{4.6e}
\end{gather} for  $S\leq t<0$. In addition
it follows easily from (\ref{equation for epsilon}) and the convergence properties (\ref{4.4}), (\ref{12251}) and (\ref{4.6}) that
\begin{equation}\label{4.7}
i\partial_t\varepsilon+\Delta^2\ep +
\mu\Delta\varepsilon+\lambda(|U_J+\varepsilon|^\alpha(U_J+\varepsilon)-|U_J|^\alpha U_J)+\mathcal{E}_J=0
\
\end{equation}
in $\ L^\infty_{loc}([S, 0),  L^{2}(\mathbb{R}^N)).$
Therefore,  setting
\begin{equation}\label{4.8}
u(t)=U_J(t)+\varepsilon(t), \quad S\leq t<0,
\end{equation}
we see that $
u \in L_{\mathrm{loc}}^{\infty}\left([S,  0),  H^{4}\left(\mathbb{R}^{N}\right)\right) \cap H_{\mathrm{loc}}^{1,  \infty}\left([S,  0),  L^2\left(\mathbb{R}^{N}\right)\right)$ and that
\begin{equation*}
i\partial_tu+\Delta^2 u+\mu\Delta u+\lambda|u|^\alpha u=0,
\ \textrm{ in }\ L^\infty_{loc}([S, 0), L^{2}(\mathbb{R}^N))
\end{equation*}
by (\ref{epJ}),  (\ref{4.7}) and (\ref{4.8}). By the local existence in $H^4(\mathbb{R}^N)$ and the uniqueness in $L^\infty_tH^4_x$,  we conclude that $u\in  C([S, 0), H^4(\mathbb{R}^N))\cap C^1([S, 0),$ $ L^2(\mathbb{R}^N))$.

Next, we  prove properties (\ref{1.4})-(\ref{1.6}) in Theorem \ref{T1.1}. We first prove (\ref{1.6}). Let $\Omega$ be an open subset of $\mathbb{R}^N$ such that $\overline\Omega\cap K=\emptyset$. It follows from (\ref{2.02}) that $A>0$ on $\Omega$ and   $A(x)=|x|^k$ when $|x|>2R$; and so there exists a constant $c>0$,  such that $A(x)\geq c(1+|x|)^k$ on $\Omega$. Moreover using (\ref{2.03}) and (\ref{12261}),  we deduce that
$$
|U_0|\lesssim A(x)^{-\frac{1}{\alpha}}\lesssim (1+|x|)^{-\frac k\alpha},  \qquad\left||U_0|^\alpha U_0\right|\lesssim (1+|x|)^{ -k(1+\fc1\alpha)}, \ \textrm{ on }\ \Omega,$$
and
$$
|\Delta U_0|\lesssim (1+|x|)^{-\frac k\alpha-2}, \qquad|\Delta^2 U_0|\lesssim (1+|x|)^{-\frac k\alpha-4}, \ \textrm{ on }\ \Omega.
$$
Since $(1+|x|)^{-\frac k\alpha}\in L^2(\mathbb{R}^N)$ by (\ref{k}), we deduce from (\ref{2.27J}), (\ref{2.29J}), (\ref{2.021J}) and the equation (\ref{epJ}) that
\begin{equation*}
\limsup_{t\uparrow0}\|U_J\|_{H^4(\Omega)}<\infty,\qquad \limsup_{t\uparrow0}\|\partial_tU_J\|_{L^2(\Omega)}<\infty.
\end{equation*}
It now follows from  (\ref{4.8}) and the $L^\infty([S, 0),$ $ H^4(\mathbb{R}^N))\cap C^1([S, 0),$ $ L^2(\mathbb{R}^N))$ boundedness of  $\varepsilon$ in  (\ref{4.6e}) that (\ref{1.6}) holds.

Our next aim is to prove (\ref{1.4}). Let now $x_0\in K$ and $r>0$,  it follows from (\ref{2.9}),  (\ref{2.012}) and (\ref{2.021J}) that
\begin{equation}\label{4.11}
(-t)^{-\frac1\alpha+\frac N{2k}}\lesssim\|U_J(t)\|_{L^2(|x-x_0|<r)}\lesssim(-t)^{-\frac1\alpha}.
\end{equation}
Using (\ref{4.8}) and the $L^2$ boundedness of $\ep$ from (\ref{4.6e}), we deduce that
\begin{eqnarray*}
	\|u(t)\|_{L^2(|x-x_0|<r)}&\geq&\|U_J(t)\|_{L^2(|x-x_0|<r)}-\|\varepsilon(t)\|_{L^2(|x-x_0|<r)}\\
	&\gtrsim&(-t)^{-\frac1\alpha+\frac N{2k}}-(-t)^\sigma,
\end{eqnarray*}
which proves  (\ref{1.4}) as $-\frac{1}{\alpha}+\frac{N}{2k}<\sigma$ by (\ref{3.4}).

Finally, we come to the proof of  (\ref{1.5}). Since $k$ satisfies $(2+\frac{8\alpha}k)(N-8)<2N$ by (\ref{k}),  we can choose a real number $p$ satisfying
\begin{equation}\label{4.12}
p>2+\frac{8\alpha}k \text{ and } p(N-8)<2N.
\end{equation}
Hence, we can apply (\ref{2.9}),  (\ref{2.012}),  (\ref{2.021J}) and Gagliardo-Nirenberg's inequality to obtain
\begin{equation*}
\begin{aligned}
(-t)^{-\frac1\alpha+\frac N{pk}}&\lesssim\|U_J\|_{L^p(U)}\lesssim\|\Delta^2 U_J\|_{L^2(U)}^{\frac N4(\frac12-\frac1p)}\|U_J\|_{L^2(U)}^{1-\frac N4(\frac12-\frac1p)}\\
&\lesssim\|\Delta^2 U_J\|_{L^2(U)}^{\frac N4(\frac12-\frac1p)}(-t)^{-\frac1\alpha(1-\frac N4(\frac12-\frac1p))},
\end{aligned}
\end{equation*}
so that
\begin{equation*}
(-t)^{\frac{8}{k(p-2)}-\frac1{\alpha}}\lesssim\|\Delta^2 U_J\|_{L^2(U)}.
\end{equation*}
This inequality together with (\ref{4.12}) implies that
\begin{equation}\label{9211}
\lim_{t\uparrow0}\|\Delta^2 U_J\|_{{L^2(U)}}=\infty.
\end{equation}
On the other hand, note that $U_j=U_{j-1}+w_j$ by (\ref{2.017}), so that $\partial_tU_J=\partial_tw_0+\cdots+\partial_tw_J$. Then it follows from  Lemma \ref{L2.2J} that
\begin{equation*}
\begin{aligned}
|\partial_tU_J|&\gtrsim \left|\partial_tw_0\right|-\left|\partial_tw_1\right|-\cdots-\left|\partial_tw_J\right|\\
&\gtrsim(-t)^{-1}|U_0|-(-t)^{-1+(1-\fc4k)}|U_0|-\cdots-(-t)^{-1+J(1-\fc4k)}|U_0|\\ &\gtrsim (-t)^{-1}|U_0|
\end{aligned}
\end{equation*}
provided $t<0$ is sufficiently small. This inequality together with (\ref{2.012}) gives
\[
\|\partial_tU_J\|_{L^2(U)}\gtrsim(-t)^{-1-\fc1\alpha+\fc N{2k}},
\]
which implies that
\begin{equation}\label{9212}
\lim_{t\uparrow0}\|\partial_tU_J\|_{{L^2(U)}}=\infty.
\end{equation}
Combining (\ref{9211}), (\ref{9212}) and the boundedness of $\ep(t)$ in (\ref{4.6e}), we obtain  (\ref{1.5}), thereby completing the  proof of Theorem \ref{T1.1}. \hfill$\Box$
\section{Appendix}\label{Appendix}

\subsection{Proof of Theorem \ref{T1.00} in the case $1\leq N\leq8$}
We fix $\ep>0$ sufficiently small such that
\begin{eqnarray*}
	\left(2-\alpha\right)\ep\le2\alpha,\qquad\ep<2\alpha.
\end{eqnarray*}
Then we define
\begin{eqnarray}\label{121816}
r_\ep=2+\ep,\ \ q_\ep=\fc{8(2+\ep)}{N\ep}.
\end{eqnarray}
It is straightforward to verify that $\left(q_\ep,r_\ep\right)\in\Lambda_b$ is a biharmonic admissible pair and  $2\leq\fc{r_\ep}{r_\ep-2}\alpha<\infty$, which implies that $H^4 \hookrightarrow L^{\frac{r_\ep}{r_\ep-2}\alpha}$.

Choosing $M=2C_{16}(\|\phi\|_{H^4}+\|\phi\|_{H^4}^{\alpha+1})$, and $T>0$ sufficiently small such that
\begin{equation}
\left(C_{16}+C_{17}\right)T^{1-\frac{2}{q_\ep}}M^\alpha\le \frac{1}{2}
\end{equation}
where the constants $C_{16}, C_{17}$ are defined in (\ref{12137}) and (\ref{12138}), respectively.
Set $J=[-T,T]$, and consider the metric space
\begin{equation}\label{12133}
\begin{aligned}
Z_{T,M}=\{&u\in L^\infty(J,H^4)\cap H^{1,q_\ep}(J,L^{r_\ep})\cap H^{1,\infty}(J,L^2):\\
&\|u\|_{L^\infty(J,H^4)\cap L^{q_\ep}(J,L^{r_\ep})}+\|\partial_tu\|_{L^{\infty}(J,L^{2})\cap L^{q_\ep}(J,L^{r_\ep})}\leq M\}
\end{aligned}
\end{equation}
It follows that $Z_{T,M}$ is a complete metric space when equipped with the distance
\begin{equation}\label{12134}
d(u,v)=\|u-v\|_{L^\infty(J,L^2)}+\|u-v\|_{L^{q_\ep}(J,L^{r_\ep})}.
\end{equation}
In what follows,  we  show that the mapping $S$, defined in (\ref{12113}),  is a strict contraction mapping on the space $Z_{T,M}$.

We first show that $S$ maps $Z_{T,M}$ into itself. Applying (\ref{12113}), (\ref{NLS2}),  (\ref{12111}), Strichartz's estimate (\ref{SZ}), H\"older's inequality and  Sobolev's embedding theorem $H^4(\R^N)\hookrightarrow L^{\fc{r_\ep\alpha}{r_\ep-2}}(\R^N)$, we conclude  that
\begin{eqnarray}\label{1812}
&&\|Su\|_{L^\infty(J,L^2)\cap L^{q_\ep}(J,L^{r_\ep})}+\|\partial_t(Su)\|_{L^\infty(J,L^2)\cap L^{q_\ep}(J,L^{r_\ep})}\notag\\
&\lesssim &F\left(\phi,T\right)+\left\|\left|u\right|^{\alpha}\left(\left|u\right|+\left|\partial_tu\right|\right)\right\|_{L^{q_\ep'}\left(J,L^{r_\ep'}\right)} \\
&\lesssim& \|\phi\|_{H^4}+\|\phi\|_{H^4}^{\alpha+1}+T^{1-\fc2{q_\ep}}\|u\|_{L^\infty(J,H^4)}^\alpha \notag 
 \left(\|u\|_{L^{q_\ep}(J,L^{r_\ep})}+\|\partial_tu\|_{L^{q_\ep}(J,L^{r_\ep})}\right).
\end{eqnarray}
Our next step is to  estimate  $\|Su\|_{L^\infty(J,H^4)}$. Using the same method as that used to derive (\ref{1222}), we obtain
\begin{eqnarray}\label{1813}
\left\|\Delta^2 \left(Su\right)\right\|_{L^\infty \left(J,L^2\right)}\lesssim \left\|\partial_t \left(Su\right)\right\|_{L^\infty \left(J,L^2\right)}+\left\|Su\right\|_{L^\infty \left(J,L^2\right)}+\left\|\left|u\right|^{\alpha}u\right\|_{L^\infty \left(J,L^2\right)}.
\end{eqnarray}
To estimate $\left\|Su\right\|_{L^\infty \left(J,L^2\right)}$, we apply  the Fundamental Theorem of Calculus
\begin{eqnarray}\label{12135}
\left|u\right|^{\alpha}u=\left|\phi\right|^{\alpha}\phi+\int_{0}^{t}\partial_s[\left|u\right|^{\alpha}u(s)]\mathrm{d}s,
\end{eqnarray}
H\"older's inequality and Sobolev's embedding $H^4\hookrightarrow L^{2\left(\alpha+1\right)}\cap L^{\frac{2r_\ep\alpha}{r_\ep-2}}$ to obtain
\begin{eqnarray}\label{12136}
\||u|^\alpha u\|_{L^\infty(J,L^2)}&\lesssim& \|\left|\phi\right|^{\alpha}\phi\|_{L^2}+\left\|\partial_t[\left|u\right|^{\alpha}u]\right\|_{L^1\left(J,L^2\right)}\notag\\
&\lesssim & \|\phi\|_{L^{2\left(\alpha+1\right)}}^{\alpha+1}+\|u\|_{L^\infty(J,L^{\frac{2r_\ep\alpha}{r_\ep-2}})}^{\alpha} \|\partial_tu\|_{L^{1}(J,L^{r_\ep})}\notag\\
&\lesssim & \|\phi\|_{H^4}^{\alpha+1}+ T^{1-\frac{1}{q_\ep}}\left\|u\right\|_{L^\infty \left(J,H^4\right)}^\alpha \left\|\partial_tu\right\|_{L^{q_\ep}\left(J,L^{r_\ep}\right)}.
\end{eqnarray}
Applying (\ref{1812}), (\ref{1813}) and (\ref{12136}), we see that if $u\in X_{T,M}$, then
\begin{eqnarray}\label{12137}
&&\|Su\|_{L^\infty(J,H^4)\cap L^{q_\ep}(J,L^{r_\ep})}+\|\partial_t\left(Su\right)\|_{L^{\infty}(J,L^{2})\cap L^{q_\ep}(J,L^{r_\ep})}\notag\\
&\le& C_{16} \left(\|\phi\|_{H^4}+\|\phi\|_{H^4}^{\alpha+1}\right)+C_{16}T^{1-\fc2{q_\ep}}M^{\alpha+1}\le M.
\end{eqnarray}

Our next aim is the desired Lipschitz property of $S$ with respect to the metric $d$ defined in (\ref{12134}). It follows from  Strichartz's estimate (\ref{SZ}), H\"older's inequality and  Sobolev's embedding  $H^4(\R^N)\hookrightarrow L^{\fc{r_\ep\alpha}{r_\ep-2}}(\R^N)$  that, given  $u,v\in Z_{T,M}$,
\begin{eqnarray}\label{12138}
d(Su,Sv) &\lesssim &\||u|^\alpha u-|v|^\alpha v\|_{L^{q_\ep'}(J,L^{r_\ep'})}\notag\\
&\lesssim &T^{1-\frac{2}{q_\ep}}\left(\|u\|_{L^\infty H^4}^\alpha+\|v\|_{L^\infty H^4}^\alpha\right)\|u-v\|_{L^{q_\ep}(J,L^{r_\ep})}\notag\\
&\le& C_{17}T^{1-\frac{2}{q_\ep}}M^\alpha d(u,v)\le \frac{1}{2}d(u,v).
\end{eqnarray}

Therefore, we obtain  a unique solution $u$ in $Z_{T,M}$ by Banach's fixed point theorem. Moreover, using the same method used in Section \ref{s4} and the uniqueness in Appendix, we can extend $u$ to a maximal solution $u\in C\left([0,T_{\text{max}}),H^4\right)$ to the equation (\ref{NLS}) with
\begin{eqnarray*}
	u,u_{t}, \Delta^2 u \in C\left([0, T_{\text{max}}), L^{2}\left(\mathbb{R}^{N}\right)\right) \cap L^{q}_{\text{loc}}\left((0, T_{\text{max}}), L^{r}\left(\mathbb{R}^{N}\right)\right)
\end{eqnarray*} for every biharmonic admissible pair $(q,r)\in\Lambda_b$.  Finally, the bolwup alternative (\ref{bl1}) and the continuous dependence are proved as in Section \ref{s4}.  This completes the proof of Theorem \ref{T1.00} in the case $1\leq N\leq8.$
{\hfill $\square$\medskip}
\subsection{Unconditional uniqueness}
In this subsection, we prove the unconditional uniqueness in $H^4(\R^N)$ for the Cauchy problem (\ref{NLS})-(\ref{12241}). The proof is an obvious adaptation of Propositions 4.2.3 and  4.2.5 in \cite{Ca9}.
\begin{theorem}[Critical case]\label{1191}
	Suppose $\lambda\in\mathbb{C}, \mu=\pm1$ or $0, N\ge9, \alpha=\frac{8}{N-8}, T>0$, and $u^{1}, u^{2} \in C([0,T],H^4(\R^N))$
	are two solutions of (\ref{NLS}) with the same initial data $\phi\in H^4$. It follows that $u^{1}=u^{2}$.
\end{theorem}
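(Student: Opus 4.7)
The strategy is the standard unconditional uniqueness scheme of Cazenave \cite{Ca9} (Prop.~4.2.5), transplanted from the Schr\"odinger to the biharmonic setting. On a small initial subinterval $[0,T_1]\subseteq[0,T]$, I would compare $u^1$ and $u^2$ via a Strichartz estimate whose coefficient is a scaling-critical space-time norm of $u^1$ and $u^2$; smallness of that norm on $[0,T_1]$ forces the difference to vanish there, and a finite iteration covers $[0,T]$. Setting $v=u^1-u^2$, Duhamel gives $v(0)=0$ and
\begin{equation*}
v(t)=i\lambda\int_0^t e^{i(t-s)(\Delta^2+\mu\Delta)}\bigl(|u^1|^\alpha u^1-|u^2|^\alpha u^2\bigr)(s)\,\mathrm{d}s.
\end{equation*}
Applying Lemma \ref{L2.2S} on the admissible pair $(\gamma,\rho)$ of (\ref{7181}), the pointwise bound $\bigl||u^1|^\alpha u^1-|u^2|^\alpha u^2\bigr|\lesssim(|u^1|^\alpha+|u^2|^\alpha)|v|$, and H\"older's inequality in space with $\alpha/p_0+1/\rho=1/\rho'$ (where $p_0=\tfrac{2N(N-4)}{(N-8)^2}$) and in time with $(\alpha+1)/\gamma=1/\gamma'$ (both valid because $\gamma=\alpha+2$ in the critical case), I would obtain
\begin{equation*}
\|v\|_{L^\gamma([0,T_1],L^\rho)}\leq C\bigl(\|u^1\|^\alpha_{L^\gamma([0,T_1],L^{p_0})}+\|u^2\|^\alpha_{L^\gamma([0,T_1],L^{p_0})}\bigr)\|v\|_{L^\gamma([0,T_1],L^\rho)}.
\end{equation*}

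The preliminary task is to show that any solution $u\in C([0,T],H^4)$ of (\ref{NLS}) already lies in $L^\gamma_{\mathrm{loc}}([0,T),L^{p_0})$. Because $1/p_0=1/\rho-4/N$, the Sobolev embedding $H^{4,\rho}(\R^N)\hookrightarrow L^{p_0}(\R^N)$ reduces this to proving $u\in L^\gamma_{\mathrm{loc}}([0,T),H^{4,\rho})$. I would rerun on $u$ the bootstrap of Proposition \ref{P5.1}: apply Strichartz to the Duhamel representations of $u$ and $\partial_t u$ in $L^\gamma L^\rho$, use the equation algebraically to write $\Delta^2 u=-iu_t-\mu\Delta u-\lambda|u|^\alpha u$, and control the nonlinear source through Lemmas \ref{l3} and \ref{l4}. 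Once $u^i\in L^\gamma([0,T'],L^{p_0})$ for every $T'<T$, dominated convergence (since $\gamma<\infty$) makes the coefficient on the right of the displayed inequality arbitrarily small for $T_1$ small, and so $v\equiv 0$ on $[0,T_1]$. Translating the origin to $T_1$ and iterating finitely many times covers $[0,T]$.

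The main obstacle is the preliminary upgrade from $C([0,T],H^4)$ to $L^\gamma_{\mathrm{loc}} L^{p_0}$. In the scaling-critical regime there is no factor $T^\varepsilon$ in front of the worst nonlinear term in Lemma \ref{l4}, so the bootstrap cannot be closed merely by shrinking $T$. I would instead close it on a small initial interval on which the free-solution quantity $F(\phi,T_1)$ of (\ref{1213}) is small, by dominated convergence as in (\ref{12111}); the contraction of Proposition \ref{P5.1} then delivers $u\in L^\gamma([0,T_1],H^{4,\rho})$ there, and neighboring intervals are patched by using $u\in C([0,T],H^4)$ to refresh the initial data at each step.
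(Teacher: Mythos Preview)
Your route diverges from the paper's, and the divergence hides a genuine gap.

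The paper never upgrades $u^{1},u^{2}$ beyond $C([0,T],H^{4})$. It uses only the Sobolev consequence $g:=|u^{1}|^{\alpha}+|u^{2}|^{\alpha}\in C([0,T],L^{N/4})$ (from $H^{4}\hookrightarrow L^{2N/(N-8)}$), applies Strichartz with the endpoint dual pair $(2,\tfrac{2N}{N+4})$ on the force, and then \emph{splits the potential} $g=g^{R}+g_{R}$ at height $R$. The large part $g^{R}$ has small $L^{\infty}_{t}L^{N/4}$ norm by dominated convergence and pairs with $\|u^{1}-u^{2}\|_{L^{2}_{t}L^{2N/(N-4)}}$; the bounded part $g_{R}$ is placed in $L^{\infty}_{t}L^{N/2}$ and pairs with $\|u^{1}-u^{2}\|_{L^{\infty}_{t}L^{2}}$, gaining a factor $\tau^{1/2}$. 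Choosing first $R$ large, then $\tau$ small, absorbs both terms. This is exactly the mechanism of Cazenave's Proposition~4.2.5, which you cite but do not actually employ.

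Your scheme instead puts the burden on the coefficient $\|u^{i}\|_{L^{\gamma}L^{p_{0}}}^{\alpha}$, which requires the preliminary regularity $u^{i}\in L^{\gamma}_{\mathrm{loc}}H^{4,\rho}$ (note $p_{0}>\tfrac{2N}{N-8}$, so $H^{4}$ alone does not embed into $L^{p_{0}}$; genuine dispersive smoothing is needed). Your proposed fix---``the contraction of Proposition~\ref{P5.1} then delivers $u\in L^{\gamma}([0,T_{1}],H^{4,\rho})$''---is circular: Proposition~\ref{P5.1} constructs \emph{some} solution $\tilde u$ in the Strichartz class and proves uniqueness only within that class. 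To conclude that the given $u$ coincides with $\tilde u$ you would need exactly the unconditional uniqueness you are proving. If instead you try to bootstrap the Strichartz norms of $u$ directly, the critical estimates (Lemma~\ref{l4}, and the bound $\|\partial_{t}(|u|^{\alpha}u)\|_{L^{\gamma'}L^{\rho'}}\lesssim\|u\|_{L^{\gamma}L^{p_{0}}}^{\alpha}\|\partial_{t}u\|_{L^{\gamma}L^{\rho}}$) already require $u\in L^{\gamma}L^{p_{0}}$ on the right-hand side, so there is no point of entry: you cannot verify the bootstrap hypothesis is finite for any $\tau>0$. The high--low decomposition of $g$ is precisely the device that breaks this circularity, and it is what your argument is missing.
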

\begin{proof}	  Set
	$$
	S=\sup \left\{\tau \in[0, T] ; u^{1}(t)=u^{2}(t) \text { for } 0 \leq t \leq \tau\right\},
	$$
	so that $0 \leq S \leq T$. Uniqueness follows if we show that $S=T$. Assume by contradiction that $S<T$. Changing $u^{1}(\cdot), u^{2}(\cdot)$ to $u^{1}(S+\cdot), u^{2}(S+\cdot),$ we are reduced to the case $S=0$, so that 
	\begin{eqnarray}\label{1152}
	\inf_{\left(q,r\right)\in \Lambda_b}\left\|u^{1}-u^{2}\right\|_{L^{q}\left((0, \tau), L^{r}\right)}>0 \quad \text { for all } \quad 0<\tau \leq T.
	\end{eqnarray}
	On the other hand, it follows from the equation (\ref{12213}) (for both $u^{1}$ and $\left.u^{2}\right)$ that
	$$
	u^{1}(t)-u^{2}(t)=i\lambda\int_{0}^{t} e^{i(t-s) (\Delta^2+\mu\Delta)}\left[\left|u^{1}(s)\right|^{\alpha} u^{1}(s)-\left|u^{2}(s)\right|^{\alpha} u^{2}(s)\right] d s.
	$$
	Applying Strichartz's estimate (\ref{SZ}), we deduce that 
	\begin{eqnarray}\label{12305}
	\sup_{\left(q,r\right)\in \Lambda_b}\left\|u^{1}-u^{2}\right\|_{L^{q}\left((0, \tau), L^{r}\right)}\lesssim \left\|g\left(u^1-u^2\right)\right\|_{L^2\left(\left(0,\tau\right),L^{\frac{2N}{N+4}}\right)}
	\end{eqnarray}
	for every $0<\tau\le T$, where $g=\left|u^1\right|^{\alpha}+\left|u^2\right|^{\alpha}$.  Since  $u^1,u^2\in C\left([0,T],H^4\right)$, we deduce from Sobolev's embedding $H^4 \hookrightarrow L^{\frac{2N}{N-8}}$ that  $u^1,u^2\in C\left([0,T],L^{\frac{2N}{N-8}}\right)$, which implies 
	\begin{eqnarray}\label{12302}
	g\in C\left([0,T],L^{\frac{N}{4}}\right).
	\end{eqnarray}
	Given any $R>0$, we set 
	\begin{eqnarray*}
		g^R=\begin{cases}
			\left|g\right|,\qquad\text{if }\left|g\right|\ge R\\
			0,\quad\qquad\text{if }\left|g\right|< R\
		\end{cases},\qquad g_R=|g|-g^R.
	\end{eqnarray*}
It is not difficult to show (by dominated convergence theorem, using (\ref{12302})) that 
\begin{eqnarray*}
	\left\|g^R\right\|_{L^\infty \left(\left(0,T\right),L^{\frac{N}{4}}\right)}:=\ep_R\underset{R\rightarrow \infty }\longrightarrow0.
\end{eqnarray*}
Moreover, since $\left|g_R\right|\le R$,
\begin{eqnarray*}
	\left\|g_R\right\|_{L^\infty \left(\left(0,T\right),L^{\frac{N}{2}}\right)}\le R^{\frac{1}{2}}\left\|g\right\|_{L^\infty \left(\left(0,T\right),L^{\frac{N}{4}}\right)}^{\frac{1}{2}}=:C(R)<\infty ,
\end{eqnarray*}
for all $R>0$. Therefore, given any $0<\tau\le T$, we deduce from H\"older's inequality that 
\begin{eqnarray}\label{12303}
\left\|g^R\left(u^1-u^2\right)\right\|_{L^2\left(\left(0,\tau\right),L^{\frac{2N}{N+4}}\right)}\le \ep_R \left\|u^1-u^2\right\|_{L^2\left(\left(0,\tau\right),L^{\frac{2N}{N-4}}\right)}
\end{eqnarray}
and 
\begin{eqnarray}\label{12304}
\left\|g_R\left(u^1-u^2\right)\right\|_{L^2\left(\left(0,\tau\right),L^{\frac{2N}{N+4}}\right)}\lesssim C(R)\tau^{\frac{1}{2}}\left\|u^1-u^2\right\|_{L^\infty \left(\left(0,\tau\right),L^2\right)}.
\end{eqnarray}
It follows from (\ref{12305}), (\ref{12303}) and (\ref{12304}) that 
\begin{eqnarray}\label{12306}
&&\sup_{\left(q,r\right)\in\Lambda_b}\left\|u^1-u^2\right\|_{L^q\left(\left(0,\tau\right),L^r\right)}\notag\\
&\le& C_{18} \left[\ep_R+C(R)\tau^{\frac{1}{2}}\right]\sup_{\left(q,r\right)\in\Lambda_b}\left\|u^1-u^2\right\|_{L^q\left(\left(0,\tau\right),L^r\right)}.
\end{eqnarray}
We first fix $R$ sufficiently large so that $C_{18}\ep_R\le \frac{1}{4}$. Then we choose  $0<\tau_0\le T$ sufficiently small so  that $C_{18}C(R)\tau_0^{\frac{1}{2}}\le \frac{1}{4}$, and we deduce from (\ref{12306}) that 
\begin{eqnarray*}
		\sup_{\left(q,r\right)\in\Lambda_b}\left\|u^1-u^2\right\|_{L^q\left(\left(0,\tau\right),L^r\right)}=0.
	\end{eqnarray*} 
This  contradicts (\ref{1152}) and proves the  uniqueness. \end{proof}
\begin{theorem}[Subritical case]\label{12308}
	Suppose $\lambda\in\mathbb{C},N\ge1,\mu=\pm1$ or $0, 0<\alpha,(N-8)\alpha<8, T>0$, and $u^{1}, u^{2} \in L^\infty ([0,T],H^4(\R^N))$
	are two solutions of (\ref{NLS}) with the same initial data $\phi\in H^4$. It follows that $u^{1}=u^{2}$.
\end{theorem}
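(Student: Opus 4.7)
The plan is to follow the contradiction argument of Theorem \ref{1191}: set
$$S=\sup\{\tau\in[0,T]:\ u^1(t)=u^2(t)\text{ for }0\leq t\leq\tau\},$$
assume by contradiction that $S<T$, translate time so that $S=0$, and then show that this forces $\|u^1-u^2\|_{L^q((0,\tau),L^r)}=0$ for some admissible pair $(q,r)$ and all sufficiently small $\tau>0$, contradicting the definition of $S$.

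The main new feature, compared with the critical case treated in Theorem \ref{1191}, is that the subcritical condition $(N-8)\alpha<8$ leaves a positive gap that can be exploited directly through H\"older's inequality in time, so no splitting of $g=|u^1|^\alpha+|u^2|^\alpha$ is required. Concretely, for $N\ge9$ I would use the biharmonic admissible pair
$$q=\frac{16}{\alpha(N-8)},\qquad r=\frac{4N}{2N-\alpha(N-8)},$$
for which $(q,r)\in\Lambda_b$, $q>2$, and $r\in[2,2N/(N-8)]$, all three consequences of $(N-8)\alpha<8$. For $1\le N\le 8$ the argument is even simpler because $H^4\hookrightarrow L^p$ for every finite $p$ (indeed $H^4\hookrightarrow L^\infty$ when $N<8$), so a direct energy estimate or the same Strichartz scheme with any convenient admissible pair works.

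Applying the Strichartz estimate \eqref{SZ} to the Duhamel representation of $w:=u^1-u^2$, together with the pointwise bound $\big||u^1|^\alpha u^1-|u^2|^\alpha u^2\big|\lesssim(|u^1|^\alpha+|u^2|^\alpha)|w|$, gives
$$\|w\|_{L^q((0,\tau),L^r)}\lesssim\big\|(|u^1|^\alpha+|u^2|^\alpha)|w|\big\|_{L^{q'}((0,\tau),L^{r'})}.$$
H\"older in space with exponents $s:=2N/(\alpha(N-8))$ and $r$, namely $1/r'=1/s+1/r$, combined with Sobolev's embedding $H^4\hookrightarrow L^{2N/(N-8)}$, controls $\||u^j|^\alpha\|_{L^\infty((0,T),L^s)}\lesssim\|u^j\|_{L^\infty H^4}^\alpha$. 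A subsequent H\"older in time then produces the gain $\tau^{1-2/q}$, yielding
$$\|w\|_{L^q((0,\tau),L^r)}\le C\,\tau^{1-\tfrac{2}{q}}\big(\|u^1\|_{L^\infty H^4}^\alpha+\|u^2\|_{L^\infty H^4}^\alpha\big)\|w\|_{L^q((0,\tau),L^r)}.$$
Since $u^j\in L^\infty H^4\hookrightarrow L^\infty L^r$, the quantity $\|w\|_{L^q((0,\tau),L^r)}$ is finite, so choosing $\tau>0$ small enough that the prefactor is at most $1/2$ forces $w=0$ on $[0,\tau]$, contradicting $S=0$.

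The principal technical checkpoint is that the proposed pair $(q,r)$ simultaneously satisfies Strichartz admissibility, fits into the two-sided Sobolev window $2\le r\le 2N/(N-8)$, and yields the strict inequality $1-2/q>0$; all three conditions reduce to the single inequality $(N-8)\alpha<8$, which is precisely the subcritical hypothesis. In the borderline critical case $(N-8)\alpha=8$ one has $q=2$ and the $\tau$-gain vanishes, forcing the splitting $g=g^R+g_R$ and the dominated-convergence argument used in Theorem \ref{1191}; in the subcritical setting, that step can be omitted entirely.
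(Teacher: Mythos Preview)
Your proposal is correct and follows the same Strichartz-plus-H\"older scheme as the paper: apply \eqref{SZ} to the Duhamel difference, bound the nonlinearity in $L^{q'}L^{r'}$ via $H^4\hookrightarrow L^p$, and absorb using the gain $\tau^{1-2/q}$. The paper dispenses with the contradiction setup and works directly on a short interval with the single admissible pair $(q,r)=\big(\tfrac{8(\alpha+2)}{N\alpha},\,\alpha+2\big)$ for all $N\ge1$, so that the spatial H\"older splitting is simply $\tfrac{1}{r'}=\tfrac{\alpha+1}{r}$. Your pair for $N\ge9$ instead places $|u^j|^\alpha$ at the Sobolev endpoint $L^{2N/(N-8)}$; this buys you that the condition $q>2$ is \emph{equivalent} to the subcritical hypothesis $(N-8)\alpha<8$, whereas the paper's claim ``$q>2$'' in fact reduces to the stronger constraint $(N-4)\alpha<8$, so your choice of exponents is actually more robust in high dimensions. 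On the other hand, for $1\le N\le8$ you only sketch the argument, while the paper's pair is intended to cover all dimensions uniformly.
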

\begin{proof}
	Similarly to the critical case, it sufficies to prove that there exists $0<\tau\le T$ such that 
	$u^1(t)=u^2(t)$ for any $0\le t\le\tau$.  Let 
	\begin{eqnarray*}
		q=\frac{8\left(\alpha+2\right)}{N\alpha},\qquad r=\alpha+2.
	\end{eqnarray*}
Then it is easy to check that $q>2$ and $\left(q,r\right)\in\Lambda_b$. Moreover, since  $u^1,u^2\in L^\infty \left([0,T],H^4\right)$, we  deduce from   the embedding $H^4 \hookrightarrow L^r$ that $u^1,u^2\in L^q\left([0,T],L^r\right)$. Next, applying Strichartz's estimate and H\"older's inequality, we conclude that 
\begin{eqnarray}
&&\left\|u^1-u^2\right\|_{L^q\left(\left(0,\tau\right),L^r\right)}\notag\\
&\lesssim& \left\|\left|u^1\right|^{\alpha}u^1-\left|u^2\right|^{\alpha}u^2\right\|_{L^{q'}\left(\left(0,\tau\right),L^{r'}\right)}\notag\\
&\lesssim& \tau^{1-\frac{2}{q}}\left(\left\|u^1\right\|_{L^\infty \left(\left(0,\tau\right),L^r\right)}^\alpha+\left\|u^2\right\|_{L^\infty \left(\left(0,\tau\right),L^r\right)}^\alpha\right)\left\|u^1-u^2\right\|_{L^q\left(\left(0,\tau\right),L^r\right)}\notag
\end{eqnarray}
for any $0<\tau\le T$. Moreover, it follows from Sobolev's embedding $H^4 \hookrightarrow L^r$ that 
\begin{eqnarray}\label{12307}
\left\|u^1-u^2\right\|_{L^q\left(\left(0,\tau\right),L^r\right)}&\le&C_{19}\tau^{1-\frac{2}{q}}\left(\left\|u^1\right\|_{L^\infty \left(\left(0,\tau\right),H^4\right)}^\alpha+\left\|u^2\right\|_{L^\infty \left(\left(0,\tau\right),H^4\right)}^\alpha\right)\notag\\
&&\qquad\times\left\|u^1-u^2\right\|_{L^q\left(\left(0,\tau\right),L^r\right)}
\end{eqnarray} Then, we choose $0<\tau_0\le T$ sufficiently small so that 
\begin{eqnarray*}
	C_{19}\tau_0^{1-\frac{2}{q}}\left(\left\|u^1\right\|_{L^\infty \left(\left(0,T\right),H^4\right)}^\alpha+\left\|u^2\right\|_{L^\infty \left(\left(0,T\right),H^4\right)}^\alpha\right)\le \frac{1}{2},
\end{eqnarray*}
and we deduce from  (\ref{12307}) that 	$u^1(t)=u^2(t)$ for any $0\le t\le\tau_0$. This finishes the proof of Theorem \ref{12308}. 
	\end{proof}
\section*{Acknowledgments}
This work is partially supported by the National Natural Science Foundation of China 11771389,  11931010 and 11621101.


\medskip
Received xxxx 2020; revised xxxx 20xx.
\medskip

\end{document}